\documentclass[a4paper,12pt]{amsart}

%@@@@@@@@@@@@@@@@@@@@@@@@@@@@@@@@@@@@%@@@@@@@@@@@@@@@@@@@@@@@@@@@@@@@@@@@@%@@@@@@@@
\usepackage{amsmath}
\usepackage{amssymb}
\usepackage{amsfonts}

%@@@@@@@@@@@@@@@@@@@@@@@@@@@@@@@@@@@@%@@@@@@@@@@@@@@@@@@@@@@@@@@@@@@@@@@@@%@@@@@@@@%@@@@@@@@@@@@@@@@@@@@@@@@@@@@@@@@@@@@@@@@@@@@@@@@@@@@@@@@@@@%@@@@@@@@@@@@@@@@@@@@@@@@@@@@@@@@@@@@@@@@@@@@@@@@@@@@@@@@@@@
\newcommand{\bA}{\mathbf{A}}

\textheight 21.5cm \textwidth 14.5cm \voffset=-0.5cm
\oddsidemargin=0.5cm \evensidemargin=0.5cm \topmargin=-0.5cm
%@@@@@@@@@@@@@@@@@@@@@@@@@@@@@@@@@@@@%@@@@@@@@@@@@@@@@@@@@@@@@@@@@@@@@@@@@%@@@@@@@@

%\usepackage[notcite,notref]{showkeys}
%\usepackage{showkeys}

%@@@@@@@@@@@@@@@@@@@@@@@@@@@@@@@@@@@@%@@@@@@@@@@@@@@@@@@@@@@@@@@@@@@@@@@@@%@@@@@@@@%@@@@@@@@@@@@@@@@@@@@@@@@@@@@@@@@@@@@@@@@@@@@@@@@@@@@@@@@@@@%@@@@@@@@@@@@@@@@@@@@@@@@@@@@@@@@@@@@@@@@@@@@@@@@@@@@@@@@@@@
%@@@@@@@@@@@@@@@@@@@@@@@@@@@@@@@@@@@@%@@@@@@@@@@@@@@@@@@@@@@@@@@@@@@@@@@@@%@@@@@@@@

%\numberwithin{equation}{section}
\newtheorem{thm}{Theorem}[section]
\newtheorem{prop}[thm]{Proposition}
\newtheorem{lem}[thm]{Lemma}

\newtheorem{rem}[thm]{Remark}
\newtheorem{rems}[thm]{Remarks}
\newtheorem{defi}[thm]{Definition}
\newtheorem{exo}{\bf\large Exercice}

%@@@@@@@@@@@@@@@@@@@@@@@@@@@@@@@@@@@@%@@@@@@@@@@@@@@@@@@@@@@@@@@@@@@@@@@@@%@@@@@@@@%@@@@@@@@@@@@@@@@@@@@@@@@@@@@@@@@@@@@@@@@@@@@@@@@@@@@@@@@@@@%@@@@@@@@@@@@@@@@@@@@@@@@@@@@@@@@@@@@@@@@@@@@@@@@@@@@@@@@@@@

%@@@@@@@@@@@@@@@@@@@@@@@@@@@@@@@@@@@@%@@@@@@@@@@@@@@@@@@@@@@@@@@@@@@@@@@@@%@@@@@@@@

\newcommand{\R}{\mathbb{R}}
\newcommand{\Z}{\mathbb{Z}}
\newcommand{\N}{\mathbb{N}}
\newcommand{\C}{\mathbb{C}}
%\newcommand{\S}{\mathbb{S}}
%@@@@@@@@@@@@@@@@@@@@@@@@@@@@@@@@@@@@%@@@@@@@@@@@@@@@@@@@@@@@@@@@@@@@@@@@@%@@@@@@@@%@@@@@@@@@@@@@@@@@@@@@@@@@@@@@@@@@@@@@@@@@@@@@@@@@@@@@@@@@@@%@@@@@@@@@@@@@@@@@@@@@@@@@@@@@@@@@@@@@@@@@@@@@@@@@@@@@@@@@@@
%@@@@@@@@@@@@@@@@@@@@@@@@@@@@@@@@@@@@%@@@@@@@@@@@@@@@@@@@@@@@@@@@@@@@@@@@@%@@@@@@@@

\newcommand{\I}{\infty}

\newcommand{\dd}{\mathbf{D}}

\newcommand{\Sum}{\displaystyle \sum}

\newcommand{\Int}{\displaystyle \int}

\newcommand{\Inf}{\displaystyle \inf}

\newcommand{\Lim}{\displaystyle \lim}
\newcommand{\Liminf}{\displaystyle \liminf}
\newcommand{\Limsup}{\displaystyle \limsup}
\newcommand{\Max}{\displaystyle \max}

%@@@@@@@@@@@@@@@@@@@@@@@@@@@@@@@@@@@@%@@@@@@@@@@@@@@@@@@@@@@@@@@@@@@@@@@@@%@@@@@@@@%@@@@@@@@@@@@@@@@@@@@@@@@@@@@@@@@@@@@@@@@@@@@@@@@@@@@@@@@@@@%@@@@@@@@@@@@@@@@@@@@@@@@@@@@@@@@@@@@@@@@@@@@@@@@@@@@@@@@@@@
%@@@@@@@@@@@@@@@@@@@@@@@@@@@@@@@@@@@@%@@@@@@@@@@@@@@@@@@@@@@@@@@@@@@@@@@@@%@@@@@@@@

%ABREVIATIONS%

\newcommand{\beq}{\begin{eqnarray}}
\newcommand{\eeq}{\end{eqnarray}}
\newcommand{\bq}{\begin{equation}}
\newcommand{\eq}{\end{equation}}
\newcommand{\beqn}{\begin{eqnarray*}}
\newcommand{\eeqn}{\end{eqnarray*}}
\newcommand{\bex}{\begin{exo}}
\newcommand{\eex}{\end{exo}}
\newcommand{\ben}{\begin{enumerate}}
\newcommand{\een}{\end{enumerate}}

%@@@@@@@@@@@@@@@@@@@@@@@@@@@@@@@@@@@@%@@@@@@@@@@@@@@@@@@@@@@@@@@@@@@@@@@@@%@@@@@@@@%@@@@@@@@@@@@@@@@@@@@@@@@@@@@@@@@@@@@@@@@@@@@@@@@@@@@@@@@@@@%@@@@@@@@@@@@@@@@@@@@@@@@@@@@@@@@@@@@@@@@@@@@@@@@@@@@@@@@@@@
%@@@@@@@@@@@@@@@@@@@@@@@@@@@@@@@@@@@@%@@@@@@@@@@@@@@@@@@@@@@@@@@@@@@@@@@@@%@@@@@@@@

\let\al=\alpha

\let\eps=\varepsilon

\let\lam=\lambda

\let\wt=\widetilde
\let\wh=\widehat

%@@@@@@@@@@@@@@@@@@@@@@@@@@@@@@@@@@@@%@@@@@@@@@@@@@@@@@@@@@@@@@@@@@@@@@@@@%@@@@@@@@%@@@@@@@@@@@@@@@@@@@@@@@@@@@@@@@@@@@@@@@@@@@@@@@@@@@@@@@@@@@%@@@@@@@@@@@@@@@@@@@@@@@@@@@@@@@@@@@@@@@@@@@@@@@@@@@@@@@@@@@
%@@@@@@@@@@@@@@@@@@@@@@@@@@@@@@@@@@@@%@@@@@@@@@@@@@@@@@@@@@@@@@@@@@@@@@@@@%@@@@@@@@

%LETTRES RONDES

\def\cB{{\mathcal B}}
\def\cC{{\mathcal C}}
\def\cD{{\mathcal D}}

\def\cL{{\mathcal L}}

\def\cP{{\mathcal P}}

%@@@@@@@@@@@@@@@@@@@@@@@@@@@@@@@@@@@@%@@@@@@@@@@@@@@@@@@@@@@@@@@@@@@@@@@@@%@@@@@@@@%@@@@@@@@@@@@@@@@@@@@@@@@@@@@@@@@@@@@@@@@@@@@@@@@@@@@@@@@@@@%@@@@@@@@@@@@@@@@@@@@@@@@@@@@@@@@@@@@@@@@@@@@@@@@@@@@@@@@@@@
%@@@@@@@@@@@@@@@@@@@@@@@@@@@@@@@@@@@@%@@@@@@@@@@@@@@@@@@@@@@@@@@@@@@@@@@@@%@@@@@@@@

%@@@@@@@@@@@@@@@@@@@@@@@@@@@@@@@@@@@@%@@@@@@@@@@@@@@@@@@@@@@@@@@@@@@@@@@@@%@@@@@@@@%@@@@@@@@@@@@@@@@@@@@@@@@@@@@@@@@@@@@@@@@@@@@@@@@@@@@@@@@@@@%@@@@@@@@@@@@@@@@@@@@@@@@@@@@@@@@@@@@@@@@@@@@@@@@@@@@@@@@@@@
%@@@@@@@@@@@@@@@@@@@@@@@@@@@@@@@@@@@@%@@@@@@@@@@@@@@@@@@@@@@@@@@@@@@@@@@@@%@@@@@@@@

%@@@@@@@@@@@@@@@@@@@@@@@@@@@@@@@@@@@@%@@@@@@@@@@@@@@@@@@@@@@@@@@@@@@@@@@@@%@@@@@@@@%@@@@@@@@@@@@@@@@@@@@@@@@@@@@@@@@@@@@@@@@@@@@@@@@@@@@@@@@@@@%@@@@@@@@@@@@@@@@@@@@@@@@@@@@@@@@@@@@@@@@@@@@@@@@@@@@@@@@@@@
%@@@@@@@@@@@@@@@@@@@@@@@@@@@@@@@@@@@@%@@@@@@@@@@@@@@@@@@@@@@@@@@@@@@@@@@@@%@@@@@@@@

\def\eqdefa{\buildrel\hbox{\footnotesize def}\over =}

%@@@@@@@@@@@@@@@@@@@@@@@@@@@@@@@@@@@@%@@@@@@@@@@@@@@@@@@@@@@@@@@@@@@@@@@@@%@@@@@@@@%@@@@@@@@@@@@@@@@@@@@@@@@@@@@@@@@@@@@@@@@@@@@@@@@@@@@@@@@@@@%@@@@@@@@@@@@@@@@@@@@@@@@@@@@@@@@@@@@@@@@@@@@@@@@@@@@@@@@@@@

%@@@@@@@@@@@@@@@@@@@@@@@@@@@@@@@@@@@@%@@@@@@@@@@@@@@@@@@@@@@@@@@@@@@@@@@@@%@@@@@@@@

%@@@@@@@@@@@@@@@@@@@@@@@@@@@@@@@@@@@@%@@@@@@@@@@@@@@@@@@@@@@@@@@@@@@@@@@@@%@@@@@@@@

\author{Hajer Bahouri}
\address{LAMA UMR CNRS 8050,
Universit\'e Paris-Est Cr\'eteil \\ 61, avenue du G\'en\'eral de Gaulle\\
94010 Cr\'eteil Cedex\\ France.} \email{hbahouri@math.cnrs.fr}

%@@@@@@@@@@@@@@@@@@@@@@@@@@@@@@@@@@@@%@@@@@@@@@@@@@@@@@@@@@@@@@@@@@@@@@@@@%@@@@@@@@

\author{Mohamed Majdoub}
\address{University of Tunis ElManar, Faculty of Sciences of Tunis,
Department of Mathematics.} \email{mohamed.majdoub@fst.rnu.tn}

%@@@@@@@@@@@@@@@@@@@@@@@@@@@@@@@@@@@@%@@@@@@@@@@@@@@@@@@@@@@@@@@@@@@@@@@@@%@@@@@@@@

\author{Nader Masmoudi}
\address{New York University \\
The Courant Institute for Mathematical Sciences.}
\email{masmoudi@courant.nyu.edu}
\thanks{N. M is partially supported by an NSF Grant DMS-0703145}

%@@@@@@@@@@@@@@@@@@@@@@@@@@@@@@@@@@@@%@@@@@@@@@@@@@@@@@@@@@@@@@@@@@@@@@@@@%@@@@@@@@

\title[Lack of compactness...]
{Lack of compactness in the 2D critical Sobolev embedding, the general case}
\date{\today}
%@@@@@@@@@@@@@@@@@@@@@@@@@@@@@@@@@@@@%@@@@@@@@@@@@@@@@@@@@@@@@@@@@@@@@@@@@%@@@@@@@@@@@@
%@@@@@@@@@@@@@@@@@@@@@@@@@@@@@@@@@@@@%@@@@@@@@@@@@@@@@@@@@@@@@@@@@@@@@@@@@%@@@@@@@@@@@@

%@@@@@@@@@@@@@@@@@@@@@@@@@@@@@@@@@@@@%@@@@@@@@@@@@@@@@@@@@@@@@@@@@@@@@@@@@%@@@@@@@@

\begin{document}
%@@@@@@@@@@@@@@@@@@@@@@@@@@@@@@@@@@@@%@@@@@@@@@@@@@@@@@@@@@@@@@@@@@@@@@@@@%@@@@@@@@
%@@@@@@@@@@@@@@@@@@@@@@@@@@@@@@@@@@@@%@@@@@@@@@@@@@@@@@@@@@@@@@@@@@@@@@@@@%@@@@@@@@

\begin{abstract}
This paper is devoted to the description of the lack of
compactness of the Sobolev embedding of $H^1(\R^2)$ in the critical Orlicz space ${\cL}(\R^2)$. It turns out that up to cores our result
is expressed in terms of the concentration-type examples derived
by J.~Moser in \cite{M} as in the radial setting investigated in \cite{BMM}. However, the analysis we used in this work is strikingly different from the one conducted in the radial case which is  based on an $L^ \infty$ estimate far away from the origin and which is no longer valid in the general framework. Within the general framework of $H^1(\R^2)$, the strategy we adopted to build the profile decomposition in terms of examples by Moser concentrated around cores is based on capacity arguments and relies on an extraction process of mass concentrations. The essential ingredient to extract cores consists
 in proving  by contradiction that if the mass responsible for the lack of compactness of the Sobolev embedding in  the Orlicz space is scattered, then the energy used would
  exceed  that of the starting sequence.
\end{abstract}
%@@@@@@@@@@@@@@@@@@@@@@@@@@@@@@@@@@@@%@@@@@@@@@@@@@@@@@@@@@@@@@@@@@@@@@@@@%@@@@@@@@@@@@@@

\subjclass[2000]{...}

%@@@@@@@@@@@@@@@@@@@@@@@@@@@@@@@@@@@@%@@@@@@@@@@@@@@@@@@@@@@@@@@@@@@@@@@@@%@@@@@@@@

\keywords{Sobolev critical exponent, Trudinger-Moser inequality,
Orlicz space, lack of compactness, capacity, Schwarz symmetrization.}

%@@@@@@@@@@@@@@@@@@@@@@@@@@@@@@@@@@@@%@@@@@@@@@@@@@@@@@@@@@@@@@@@@@@@@@@@@%@@@@@@@@@@@@@@
\maketitle

\tableofcontents

%@@@@@@@@@@@@@@@@@@@@@@@@@@@@@@@@@@@@%@@@@@@@@@@@@@@@@@@@@@@@@@@@@@@@@@@@@%@@@@@@@@@@@@@@@

\section{Introduction}

%@@@@@@@@@@@@@@@@@@@@@@@@@@@@@@@@@@@@%@@@@@@@@@@@@@@@@@@@@@@@@@@@@@@@@@@@@%@@@@@@@@

\subsection{Critical $2D$ Sobolev embedding}

%@@@@@@@@@@@@@@@@@@@@@@@@@@@@@@@@@@@@%@@@@@@@@@@@@@@@@@@@@@@@@@@@@@@@@@@@@%@@@@@@@@

It is well known that $H^1(\R^2)$ is continuously embedded  in all
Lebesgue spaces $L^p(\R^2)$ for $2\leq p<\infty$, but not in
$L^{\infty}(\R^2 )$.   On the other hand, it
is also known (see for instance  \cite{BCD, Kozono}) that  $H^1(\R^2)$ embeds
in  ${\rm BMO} (\R^2) \cap L^2 (\R^2)    $,  where ~$BMO(\R^d )$
denotes the space of bounded mean oscillations which is  the space
of locally integrable functions~$f$ such that \[ \|f\|_{BMO}\eqdefa
\sup_{B}\frac 1 {|B|}\int_B |f-f_B|\,dx< \infty \quad
\mbox{with}\quad f_B\eqdefa \frac 1 {|B|}\int_B f \,dx.\] The above
supremum being taken over
the set of Euclidean balls~$B$,~$|\cdot |$ denoting the Lebesgue measure.

For the sake of geometric problems and the understanding of   features of solutions
to nonlinear partial differential equations with
exponential growth, we investigate in this paper the lack of compactness of Sobolev embedding of $H^1(\R^2)$ in the Orlicz space ${\mathcal L}(\R^2)$ (defined below) which is  not
  comparable to~$BMO(\R^2 )$ (for a proof of this fact, one can consult \cite{BMM}). Notice
 that the lack of compactness of
$$\dot H^1(\R^d)\hookrightarrow BMO(\R^d)$$
was characterized in \cite{BCG} using a wavelet-based profile decomposition.

Note that  in higher dimension, are available several works  that highlight the role of the study of the lack of compactness in critical Sobolev embedding to  the description of bounded energy sequences of solutions of nonlinear partial differential equations. Among others, one can mention  \cite{BG, BG2, IB, km, ker,La, Ma, Tao}.\\

For the convenience of the reader, let us introduce the so-called Orlicz spaces on $\R^d$ and some
related basic facts (for a
complete presentation and more details, we refer the reader to
\cite{Orlicz-Book}).
\begin{defi}\label{deforl}\quad\\
Let $\phi : \R^+\to\R^+$ be a convex increasing function such that
$$
\phi(0)=0=\lim_{s\to 0^+}\,\phi(s),\quad
\lim_{s\to\infty}\,\phi(s)=\infty.
$$
We say that a measurable function $u : \R^d\to\C$ belongs to
$L^\phi$ if there exists $\lambda>0$ such that
$$
\Int_{\R^d}\,\phi\left(\frac{|u(x)|}{\lambda}\right)\,dx<\infty.
$$
We denote then \bq \label{norm}
\|u\|_{L^\phi}=\Inf\,\left\{\,\lambda>0,\quad\Int_{\R^d}\,\phi\left(\frac{|u(x)|}{\lambda}\right)\,dx\leq
1\,\right\}. \eq
\end{defi}
\begin{rems}\quad\\
\noindent$\bullet$
It is easy to check that $\|\cdot\|_{L^\phi}$ is a norm  on the  $\C$-vector space $L^\phi$
which is invariant under translations and oscillations. In particular, we have for any $ u \in L^\phi$
\begin{equation}\label{moeql}\|u\|_{L^\phi} = \|\,|u|\,\|_{L^\phi} \,.\end{equation} \noindent$\bullet$ For $\phi(s)=s^p,\, 1\leq
p<\infty$,  $L^\phi$ is nothing else than the  Lebesgue space
$L^p$. \\ \noindent$\bullet$ One can  easily  verify that the number $1$  in
\eqref{norm} may be replaced by any positive constant and that this changes
the norm $\|\cdot\|_{L^\phi}$ by an equivalent norm.\end{rems}

In what follows we shall fix  $d=2$, $\phi(s)={\rm e}^{ s^2}-1$ and
denote the Orlicz space $L^\phi$ by ${\mathcal L}$ endowed with the
norm $\|\cdot\|_{\mathcal L}$ where the number $1$ is replaced by
the constant $\kappa$ that will be fixed in identity  \eqref{Mos2}
below.  As it is already mentioned, this does not have any
impact on  the definition of the Orlicz space.

The 2D critical Sobolev embedding in the Orlicz space ${\mathcal L}$
states as follows:
\begin{prop}\label{prop2D-embed}
\begin{equation}
\label{2D-embed} \|u\|_{{\mathcal
L}}\leq\frac{1}{\sqrt{4\pi}}\|u\|_{H^1}.
\end{equation}
\end{prop}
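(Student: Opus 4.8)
The plan is to deduce the Orlicz estimate \eqref{2D-embed} from the sharp Trudinger--Moser inequality. Recall that the latter asserts that there is a constant $\kappa>0$ (this is precisely the constant referred to in the paper just before \eqref{Mos2}) such that
\begin{equation*}
\sup_{\|u\|_{H^1}\le 1}\,\Int_{\R^2}\bigl({\rm e}^{4\pi|u(x)|^2}-1\bigr)\,dx\;\le\;\kappa,
\end{equation*}
and that $4\pi$ is the best exponent for which the supremum is finite. Since the paper has agreed to normalize the Orlicz norm with the constant $\kappa$ in place of $1$ in \eqref{norm}, and since the remark after Definition~\ref{deforl} guarantees that replacing $1$ by any positive constant only changes $\|\cdot\|_{\mathcal L}$ to an equivalent norm, it suffices to work with this normalization throughout.

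First I would dispose of the trivial case $u=0$, and for $u\neq 0$ set $\mu\eqdefa \frac{1}{\sqrt{4\pi}}\|u\|_{H^1}>0$. The claim \eqref{2D-embed} is exactly the assertion $\|u\|_{\mathcal L}\le\mu$, which by the definition \eqref{norm} of the Orlicz norm (with $1$ replaced by $\kappa$) amounts to showing
\begin{equation*}
\Int_{\R^2}\phi\!\left(\frac{|u(x)|}{\mu}\right)dx\;=\;\Int_{\R^2}\Bigl({\rm e}^{\,|u(x)|^2/\mu^2}-1\Bigr)\,dx\;\le\;\kappa .
\end{equation*}
Now observe that $\dfrac{|u(x)|^2}{\mu^2}=\dfrac{4\pi|u(x)|^2}{\|u\|_{H^1}^2}=4\pi\,\bigl|v(x)\bigr|^2$ where $v\eqdefa u/\|u\|_{H^1}$ has $\|v\|_{H^1}=1$. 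Hence the left-hand side equals $\Int_{\R^2}\bigl({\rm e}^{4\pi|v(x)|^2}-1\bigr)\,dx$, which is $\le\kappa$ by the Trudinger--Moser inequality applied to $v$. This is the heart of the argument; it also shows why the definition of $\|\cdot\|_{\mathcal L}$ must be taken with the constant $\kappa$, so that the embedding constant comes out to be exactly $\frac{1}{\sqrt{4\pi}}$.

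The one point that needs a little care — and is really the only obstacle — is justifying that the infimum defining $\|u\|_{\mathcal L}$ is actually attained, or at least that the inequality $\int\phi(|u|/\lambda)\le\kappa$ at $\lambda=\mu$ forces $\|u\|_{\mathcal L}\le\mu$. This follows because the map $\lambda\mapsto\Int_{\R^2}\phi(|u(x)|/\lambda)\,dx$ is nonincreasing in $\lambda$ (as $\phi$ is increasing), so that the set $\{\lambda>0:\int\phi(|u|/\lambda)\le\kappa\}$ is an interval of the form $[\lambda^\ast,\infty)$ or $(\lambda^\ast,\infty)$; since $\mu$ belongs to this set by the computation above, its infimum $\|u\|_{\mathcal L}$ is at most $\mu$. (One may also invoke monotone convergence to see that $\phi(|u|/\lambda)\to 0$ as $\lambda\to\infty$, ensuring the set is nonempty and the norm finite, though finiteness already follows from the estimate just proved.) This completes the proof of Proposition~\ref{prop2D-embed}. $\Box$
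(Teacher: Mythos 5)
Your proof is correct and follows exactly the route the paper intends: the paper derives \eqref{2D-embed} immediately from the Trudinger--Moser inequality \eqref{Mos2}, using the normalization of the Orlicz norm with the constant $\kappa$, which is precisely your scaling argument with $v=u/\|u\|_{H^1}$. The extra care you take with the monotonicity of $\lambda\mapsto\int\phi(|u|/\lambda)\,dx$ is a standard and correct justification of the infimum step.
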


Let us point out
that the embedding~\eqref{2D-embed} derives immediately from the
following Trudinger-Moser  inequality proved in  \cite{Ruf}:
\begin{prop}\label{proptm}
\begin{equation}
\label{Mos2} \sup_{\|u\|_{H^1}\leq 1}\;\;\Int_{\R^2}\,\left({\rm
e}^{4\pi |u|^2}-1\right)\,dx:=\kappa<\infty,
\end{equation} and this is false for $\alpha>4\pi$.
\end{prop}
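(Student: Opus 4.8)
The plan is to prove the two parts separately: first the finiteness of the supremum for the exponent $4\pi$, then the sharpness, i.e. the failure of the analogous statement for $\alpha>4\pi$.

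For the finiteness, I would reduce the problem on $\R^2$ to the classical Moser inequality on a bounded domain. Start from $u\in H^1(\R^2)$ with $\|u\|_{H^1}\le 1$, so that $\|\nabla u\|_{L^2}^2+\|u\|_{L^2}^2\le 1$. The first step is to replace $u$ by its Schwarz symmetrization $u^*$: since the rearrangement preserves the $L^2$ norm, does not increase the Dirichlet energy (Pólya--Szegő), and preserves the distribution function of $|u|$ (hence the value of $\int_{\R^2}({\rm e}^{4\pi|u|^2}-1)\,dx$), it suffices to treat radially symmetric, nonincreasing $u$. The second step is a splitting: write the integral as the contribution from a large ball $B_R$ plus the contribution from its complement. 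On the complement one uses that a radial $H^1$ function decays, so $|u|$ is small there and ${\rm e}^{4\pi|u|^2}-1\le C|u|^2$ is integrable by the $L^2$ bound. On the ball $B_R$ one must absorb the zeroth-order term $\|u\|_{L^2}^2$ into the budget: here the delicate point is that $\|\nabla u\|_{L^2(B_R)}^2$ may equal $1$ in the limit, so the exponent $4\pi$ is exactly critical and one cannot afford any loss. The cleanest route is to invoke Ruf's sharp inequality directly (as the excerpt says, it is ``proved in \cite{Ruf}''), or alternatively to combine the Moser--Trudinger inequality on $B_R$ (with the sharp constant $4\pi$, due to Moser) with a careful truncation argument that transfers the full $H^1$ budget; the technical heart is controlling the cross term generated by truncating $u$ at level corresponding to $B_R$. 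I expect \emph{this absorption of the lower-order term at the critical exponent} to be the main obstacle, and the reason the statement is attributed to Ruf rather than being elementary.

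For the sharpness, I would exhibit the explicit Moser concentrating sequence. Define, for $\alpha>0$, the radial functions
\begin{equation*}
f_n(x)=\frac{1}{\sqrt{2\pi}}\begin{cases}\sqrt{\log n}, & |x|\le 1/n,\\[1mm] \dfrac{\log(1/|x|)}{\sqrt{\log n}}, & 1/n\le |x|\le 1,\\[1mm] 0, & |x|\ge 1,\end{cases}
\end{equation*}
so that $\|\nabla f_n\|_{L^2}^2=1$ while $\|f_n\|_{L^2}^2\to 0$; after normalizing to unit $H^1$ norm the gradient part still tends to $1$. A direct computation on the core $|x|\le 1/n$ gives $4\pi|f_n|^2=\,$(up to a factor $\to 1$)$\,2\log n$ there, so $\int_{|x|\le 1/n}({\rm e}^{4\pi|f_n|^2}-1)\,dx\sim n^{2}\cdot(\pi/n^{2})=\pi$, which stays bounded --- consistent with the finiteness above. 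But if one replaces $4\pi$ by $\alpha>4\pi$, the same computation yields an integrand of order $n^{\alpha/(2\pi)}$ on a disk of area $\sim n^{-2}$, hence a contribution of order $n^{\alpha/(2\pi)-2}\to\infty$. This shows the supremum is infinite for every $\alpha>4\pi$, completing the proof. \Box

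(Here I have used only the classical Moser computation and Ruf's theorem, both available in the cited references; no further ingredients from later in the paper are needed.)
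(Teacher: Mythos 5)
The paper offers no proof of this proposition at all: it is quoted as Ruf's theorem and attributed to \cite{Ruf}, and your proposal ultimately does the same for the finiteness part, correctly identifying that absorbing the $L^2$ portion of the unit $H^1$ budget at the exactly critical exponent $4\pi$ is the non-elementary content supplied by Ruf (your alternative symmetrization-plus-splitting sketch is not carried out, but you do not rely on it). Your sharpness argument with the normalized Moser sequence, giving a contribution of order $n^{\alpha/(2\pi)-2}\to\infty$ on the core when $\alpha>4\pi$, is the standard computation and is correct, so the proposal is consistent with the paper's treatment.
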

If we only  require that  $\|\nabla u\|_{L^2(\R^2)}\leq1$
rather than $\|u\|_{H^1(\R^2)}\leq1$,   then the following sharp
 Trudinger-Moser type inequality holds:
\begin{prop} \label{mostrud}
 A constant $C$ exists such that
\begin{equation}
\label{Mos1} \Int_{\R^2}\,\left(\frac{ {\rm e}^{4\pi
|u|^2}-1 }{1 + |u|^2 }\right)\,dx\leq C \|u\|_{L^2(\R^2)}^2,
\end{equation}
for all $u$ in $H^1(\R^2)$ such that $\|\nabla
u\|_{L^2(\R^2)}\leq1$.
\end{prop}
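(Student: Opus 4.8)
The plan is to deduce the sharp inequality \eqref{Mos1} from the subcritical Trudinger--Moser inequality on bounded domains together with a careful splitting of the level sets of $|u|$. Fix $u\in H^1(\R^2)$ with $\|\nabla u\|_{L^2}\leq 1$. The key observation is that on the region where $|u|$ is bounded, say $\{|u|\leq M\}$, one has the pointwise bound
\[
\frac{{\rm e}^{4\pi|u|^2}-1}{1+|u|^2}\leq \frac{{\rm e}^{4\pi M^2}-1}{1}\cdot\frac{|u|^2}{1}\quad\text{is too crude};
\]
instead I would use that $\frac{{\rm e}^{t}-1}{1+s}\leq C_M\, s$ for $0\leq s\leq M^2$ and $t=4\pi s$, since $({\rm e}^{t}-1)/t$ is bounded on $[0,4\pi M^2]$. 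This already controls $\int_{\{|u|\leq M\}}$ by $C_M\|u\|_{L^2}^2$. The real work is the region $\{|u|>M\}$, which has small measure: by Chebyshev, $|\{|u|>M\}|\leq \|u\|_{L^2}^2/M^2$, and more importantly the $H^1$ mass of $u$ localized there is small, which is what lets us apply a critical (as opposed to subcritical) exponential bound with the sharp constant $4\pi$.

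The main step is therefore to handle $\int_{\{|u|>M\}}\frac{{\rm e}^{4\pi|u|^2}-1}{1+|u|^2}\,dx$. Here I would write, on $\{|u|>M\}$, $\frac{1}{1+|u|^2}\leq \frac{1}{|u|^2}\leq \frac{1}{M^2}$, reducing matters to bounding $\frac{1}{M^2}\int_{\{|u|>M\}}({\rm e}^{4\pi|u|^2}-1)\,dx$. Now decompose $|u|^2 = |u|^2$ and use that on this set $|u|^2 = (|u|-M)^2 + 2M|u| - M^2 \leq (|u|-M)^2 + 2M|u|$; more efficiently, set $v=(|u|-M)_+$, so that $|u|^2\le (1+\eps)v^2 + C_\eps M^2$ on $\{|u|>M\}$ for any $\eps>0$ (by Young's inequality applied to $|u|=v+M$). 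Then ${\rm e}^{4\pi|u|^2}\leq {\rm e}^{4\pi C_\eps M^2}{\rm e}^{4\pi(1+\eps)v^2}$, and since $\|\nabla v\|_{L^2}\leq\|\nabla u\|_{L^2}\leq 1$ while $v$ is supported in a set of finite measure, one can invoke the Ruf inequality \eqref{Mos2} (after absorbing the $(1+\eps)$ by noting $4\pi(1+\eps)\|v\|_{H^1}^2$ can be kept below $4\pi$ only if $\|v\|_{L^2}$ is small — this is the delicate point). The quantity $\|v\|_{L^2}^2 = \int_{\{|u|>M\}}(|u|-M)^2\,dx$ is bounded by $\|u\|_{L^2}^2$, but to keep $\|v\|_{H^1}^2\le 1$ we need $M$ large; the trick is that one does not need $\|v\|_{H^1}\le1$ but only needs the integral $\int({\rm e}^{4\pi v^2}-1)$ to be finite, and here one should use \eqref{Mos2} in its scaled form: $\int_{\R^2}({\rm e}^{4\pi|w|^2}-1)\,dx\leq \kappa$ whenever $\|w\|_{H^1}\le1$, applied to $w=v/\|v\|_{H^1}$, which gives $\int({\rm e}^{4\pi v^2}-1)\,dx\le \int({\rm e}^{4\pi\|v\|_{H^1}^2 (v/\|v\|_{H^1})^2}-1)$; this is not directly $\le\kappa$ unless $\|v\|_{H^1}\le1$.

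Consequently the cleanest route, and the one I would actually carry out, is to not reduce to \eqref{Mos2} at all for the tail but rather to prove \eqref{Mos1} by the classical Moser argument directly: symmetrize $u$ (Schwarz symmetrization preserves the $L^2$ norm and does not increase the Dirichlet energy, and both sides of \eqref{Mos1} are monotone under symmetrization since the integrand is an increasing function of $|u|$), reduce to radial decreasing $u=u(r)$, perform the change of variables $r={\rm e}^{-t/2}$, $w(t)=\sqrt{4\pi}\,u({\rm e}^{-t/2})$, so that $\int_0^\infty |u'|^2\,2\pi r\,dr = \int_{\R}|\dot w|^2\,dt \le 1$ and $|u(r)|^2 = \frac{1}{4\pi}|w(t)|^2$, turning the left side of \eqref{Mos1} into $\frac{1}{4\pi}\int_{\R} \frac{{\rm e}^{|w(t)|^2 - t}-{\rm e}^{-t}}{1+\frac{1}{4\pi}|w(t)|^2}\,dt$ and the right side into $\frac{1}{4\pi}\int_{\R}|w(t)|^2\,{\rm e}^{-t}\,dt$ (up to the contribution of the complement of the unit disk, handled separately by the subcritical bound). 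Then the estimate $|w(t)|^2\leq |t| + \|w\|_{\dot H^1}^2 \cdot\text{(something)}$ — precisely $|w(t)-w(s)|\le \|\dot w\|_{L^2}|t-s|^{1/2}$ — combined with splitting at the point $t_0$ where $|w(t_0)|^2 = t_0$ gives the pointwise control $|w(t)|^2 - t \le \log(1+C|w(t)|^2)$ type bound on the bad region, after which the $1/(1+|u|^2)$ denominator exactly compensates the exponential, leaving an integrand dominated by $|w(t)|^2{\rm e}^{-t}$. The hard part, and where all the sharpness of the constant $4\pi$ enters, is precisely this one-dimensional estimate showing ${\rm e}^{|w(t)|^2-t}/(1+|w(t)|^2)\lesssim 1 + |w(t)|^2{\rm e}^{-t}$ uniformly over $t$ and over all $w$ with $\|\dot w\|_{L^2}\le1$ and $w(0)=0$; this is exactly the content of Moser's lemma refined to track the denominator, and it is genuinely where one must be careful that no constant larger than $4\pi$ sneaks in.
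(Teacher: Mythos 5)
First, a point of comparison: the paper does not prove Proposition \ref{mostrud} at all --- it is quoted from \cite{IMN11TM}, where it is the main theorem --- so your argument has to stand entirely on its own, and it does not. Your first route (splitting at level $M$ and trying to reach Ruf's inequality \eqref{Mos2} through $v=(|u|-M)_+$) you yourself concede fails, because the $(1+\eps)$ loss and the absence of control on $\|v\|_{H^1}$ destroy the sharp exponent $4\pi$. Your second route starts correctly (Schwarz symmetrization leaves both sides of \eqref{Mos1} well behaved, and the change of variables $r={\rm e}^{-t/2}$, $w=\sqrt{4\pi}\,u$ turns the Dirichlet integral into $\int_\R|\dot w|^2\,dt$), but everything is then delegated to the ``one--dimensional estimate'' ${\rm e}^{w(t)^2-t}/(1+w(t)^2)\lesssim 1+w(t)^2{\rm e}^{-t}$, which cannot carry the proof. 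Under your stated normalization $w(0)=0$, $\|\dot w\|_{L^2}\le 1$, one has $w(t)\le\sqrt t$, hence $w(t)^2-t\le 0$ and the claimed bound is a triviality with no content; worse, its right-hand side integrates to $\int_0^\infty 1\,dt=\infty$, so it cannot yield \eqref{Mos1}, whose right-hand side $C\|u\|_{L^2}^2$ may moreover be arbitrarily small --- every region, including the ``good'' one where $w(t)^2\le(1-\delta)t$, must be estimated by $\int w^2{\rm e}^{-t}\,dt$ and not by an absolute constant.

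Without the normalization $w(0)=0$ --- and this is the actual situation, since the symmetrized $u$ has no reason to vanish on the unit circle --- the pointwise claim is simply false: take $w$ rising from $0$ near $t=-L$ to a value $c>0$ at $t=0$ (Dirichlet cost $c^2/L$, negligible for $L$ large) and then $w(t)=c+\sqrt{(1-\delta)\,t}$ on $[0,t_1]$ with $t_1\approx 2c^2/\delta^2$; the total Dirichlet norm stays below $1$, yet $w(t_1)^2-t_1\approx C\,c^2/\delta$ is huge while $1+w(t_1)^2{\rm e}^{-t_1}\approx 1$. Such configurations are excluded in the true statement only because the long plateau at level $c$ on $t<0$ forces $\|u\|_{L^2}$ to be large: the genuine estimate is an integral inequality in which the exponential tail, the denominator $1+|u|^2$, and the $L^2$ norm are entangled over the whole line (a covering/summation argument over the set where $w(t)^2$ is comparable to $t$, using the global Dirichlet budget). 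That is exactly the content of \cite{IMN11TM}, and it is the step missing from your proposal; nothing in the H\"older bound $|w(t)-w(s)|\le|t-s|^{1/2}$ alone produces it.
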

\begin{rems}\quad\\
\noindent$\bullet$
These Trudinger-Moser inequalities established in \cite{IMN11TM}
  will be of constant use along this paper. Note that  a sharp form of  Trudinger-Moser  inequalities  in bounded domain was  obtained in \cite{AD} and a subtle improvement of these inequalities was  demonstrated  in \cite{Struwe10}. For further results on the subject, see
\cite{AT, A, CC,Flu,FLS,IMM07,M,Tru} and the references therein.\\
\noindent$\bullet$ The injection of $H^1(\R^2)$ in  ${\mathcal L}(\R^2)$  is sharp
within the context of Orlicz spaces.  However, this embedding can be improved if we allow
different function spaces than Orlicz spaces. More precisely \bq
\label{sharp-embed} H^1(\R^2)\hookrightarrow BW(\R^2), \eq where the
Brezis-Wainger space $BW(\R^2)$ is defined via
$$
\|u\|_{BW}:=\Big(\int_0^1\Big(\frac{u^{\sharp}(t)}{\log({\rm
e}/t)}\Big)^2\,\frac{dt}{t}\Big)^{1/2}+\Big(\int_1^\I\,u^{\sharp}(t)^2\,dt\Big)^{1/2},
$$
where $u^{\sharp}$ denotes the symmetric decreasing rearrangement of $u$. \\

The embedding \eqref{sharp-embed} is sharper than \eqref{2D-embed} as $BW(\R^2)\varsubsetneqq \cL(\R^2)$. It is also optimal with respect to all rearrangement invariant Banach function spaces. For more details on this subject, we refer the reader to \cite{BW, Cianchi, EKP, Hans, HMT, MP-PAMS}. \\
\noindent$\bullet$
To end this short introduction to Orlicz spaces, let us notice  that the Orlicz space ${\cL}$ behaves like $L^2$ for functions in $H^1\cap L^\infty$ (see \cite{BMM} for more details).\end{rems}

%@@@@@@@@@@@@@@@@@@@@@@@@@@@@@@@@@@@@%@@@@@@@@@@@@@@@@@@@@@@@@@@@@@@@@@@@@%@@@@@@@@@@@@@@@

\subsection{Development on  lack of
compactness of Sobolev embedding  in the Orlicz space}

%@@@@@@@@@@@@@@@@@@@@@@@@@@@@@@@@@@@@%@@@@@@@@@@@@@@@@@@@@@@@@@@@@@@@@@@@@%@@@@@@@@@@@@@@@

The embedding~$H^1\hookrightarrow{\mathcal L}$  is not compact at
least for two reasons. The first reason is the lack of compactness
at infinity. A typical example is given by $u_k(x)=\varphi(x+x_k)$ where
$0\neq\varphi\in{\mathcal D}$ and $|x_k|\to\infty$. The second
reason is of concentration-type derived by  P.-L.  Lions
\cite{Lions1, Lions2} and illustrated by the following  fundamental
example  $f_{\alpha}$ defined  for $\alpha >0 $  by:
\begin{eqnarray*}
 f_{\alpha}(x)&=&\; \left\{
\begin{array}{cllll}0 \quad&\mbox{if}&\quad
|x|\geq 1,\\\\ -\frac{\log|x|}{\sqrt{2\alpha\pi}} \quad
&\mbox{if}&\quad {\rm e}^{-\alpha}\leq |x|\leq 1 ,\\\\
\sqrt{\frac{\alpha}{2\pi}}\quad&\mbox{if}&\quad |x|\leq {\rm
e}^{-\alpha}.
\end{array}
\right.
\end{eqnarray*}

Indeed, it can be seen easily that
$f_{\alpha}\rightharpoonup 0$ in $H^1(\R^2)$ as $\alpha$ tends to either $0$ or $\infty$. However, the lack of compactness of this sequence
in  the Orlicz space ${\cL}$ occurs only when $\alpha$ goes to
infinity and we have by straightforward  computations  (detailed for instance  in \cite{BMM}):
\bq
\label{behav}\|f_{\alpha}\|_{\mathcal L}\rightarrow \frac{1}{\sqrt{4\pi}}\quad\mbox{as}\quad \alpha\to\infty\quad\mbox{and}\quad \|f_{\alpha}\|_{\mathcal L}\rightarrow
0\quad\mbox{as}\quad \alpha\to0,\eq
and that the essential contribution of $\|f_{\alpha}\|_{\mathcal L}$ when $\alpha \to \infty$ comes from the  balls $B(0,{\rm
e}^{-\alpha})$.\\

The difference between the behavior of these families in the Orlicz
space when $\alpha\to 0$ or $\alpha\to\infty$ comes from the fact
that the concentration effect is only displayed by this family when
$\alpha\to\infty$.\\

In fact, in \cite{Lions1, Lions2}, P. -L. Lions highlighted the fact that the defect of compactness of Sobolev embedding  in the Orlicz space ${\cL}$, under compactness at infinity,  is due to a concentration phenomena. More precisely, he proved  the following result.
\begin{prop}
\label{macroscop}
Let $(u_n)$ be a sequence in
$H^1(\R^2)$ such that
$$
u_n\rightharpoonup 0\;\;\mbox{in}\;\;
H^1,\;\;\limsup_{n\to\infty}\,\|u_n\|_{\cL}:=A>0\;\;\mbox{and}\;\;\lim_{R\to\infty}
\limsup_{n\to\infty}\|u_n\|_{L^2(|x|>R)}=0\,.
$$
Then, there exists a point $x_0\in\R^2$ and a constant $c>0$ such that
\bq
\label{concent}
|\nabla u_n(x)|^2\,dx\rightharpoondown\,\mu\geq c\,\delta_{x_0},\, \mbox{as} \,\, n\to\infty,
\eq
weakly in the sense of measures.
\end{prop}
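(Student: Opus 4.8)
The plan is to argue by contradiction, exploiting the quantitative Trudinger--Moser inequality \eqref{Mos1} to show that if the gradient energy did not concentrate at a single point, then the Orlicz norm would have to vanish. First I would recall the basic mechanism by which an Orlicz norm bounded away from zero forces a lower bound on a localized Dirichlet energy: if $\|u_n\|_{\mathcal L}\geq A/2$ for $n$ large, then by definition of the norm \eqref{norm} there is a constant $c_0=c_0(A,\kappa)>0$ with $\Int_{\R^2}(\mathrm e^{4\pi|u_n|^2/\lambda_n^2}-1)\,dx> \kappa$ for every $\lambda_n\leq A/2$; combined with the compactness at infinity hypothesis this traps a fixed amount of ``exponential mass'' inside a fixed large ball $B_R$. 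The key point is that inequality \eqref{Mos1}, rescaled, shows this exponential mass cannot be produced by a function whose Dirichlet energy on the relevant region is small: since $u_n\rightharpoonup 0$ in $H^1$, the $L^2$ norm is bounded, so \eqref{Mos1} applied to $u_n/\|\nabla u_n\|_{L^2}$ forces $\|\nabla u_n\|_{L^2}$ to stay bounded below, but more importantly a localized version shows the energy cannot be spread out.

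Next I would set up the dichotomy on the measures. Since $(|\nabla u_n|^2\,dx)$ is bounded in the space of measures on $\R^2$, pass to a subsequence so that $|\nabla u_n|^2\,dx\rightharpoondown\mu$ weakly-$*$, with $\mu$ a finite nonnegative measure supported (by compactness at infinity) in a ball $\overline{B_R}$. The goal is to show $\mu$ has an atom. Suppose not: then $\mu$ is non-atomic, so $\overline{B_R}$ can be covered by finitely many balls $B(y_j,r)$ on each of which $\mu(B(y_j,2r))<\varepsilon$, for $\varepsilon$ as small as we like. On each such ball, using a cutoff $\chi_j$ supported in $B(y_j,2r)$ equal to $1$ on $B(y_j,r)$, the function $\chi_j u_n$ has $\|\nabla(\chi_j u_n)\|_{L^2}^2 \leq \mu(B(y_j,2r)) + o(1) + C r^{-2}\|u_n\|_{L^2(B(y_j,2r))}^2$; the last term is controlled because $u_n\rightharpoonup 0$ implies strong $L^2$ convergence to $0$ on bounded sets by Rellich, so it is $o(1)$. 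Hence for $n$ large $\|\nabla(\chi_j u_n)\|_{L^2}^2 < 2\varepsilon$.

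Now I would estimate the Orlicz norm of each piece. With $\|\nabla(\chi_j u_n)\|_{L^2}^2<2\varepsilon$, write $v=\chi_j u_n/\sqrt{2\varepsilon}$ so $\|\nabla v\|_{L^2}<1$; applying \eqref{Mos1} to $v$ gives $\Int_{\R^2}\Frac{\mathrm e^{4\pi|v|^2}-1}{1+|v|^2}\,dx\leq C\|v\|_{L^2}^2 = \Frac{C}{2\varepsilon}\|\chi_j u_n\|_{L^2}^2 = o(1)$ as $n\to\infty$ (again by Rellich). Since on any region where $|\chi_j u_n|$ is bounded the integrand $\mathrm e^{4\pi|v|^2}-1$ is comparable, up to the harmless factor $1+|v|^2$, to what appears in the definition of $\|\cdot\|_{\mathcal L}$ after the scaling $\lambda\sim\sqrt\varepsilon$, one deduces $\|\chi_j u_n\|_{\mathcal L}\leq C'\sqrt\varepsilon$ for $n$ large — here one must be slightly careful to handle the region where $|\chi_j u_n|$ is large, which is exactly where \eqref{Mos1} is most favorable because the denominator $1+|v|^2$ is large, so that region contributes negligibly to the Orlicz norm too (this is the standard trick for passing between \eqref{Mos1} and the Orlicz norm, carried out in \cite{BMM}). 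Summing over the finitely many $j$ via the triangle inequality, and using that $u_n$ is essentially $\sum_j\chi_j u_n$ up to a partition of unity with bounded overlap, gives $\limsup_n\|u_n\|_{\mathcal L}\leq C''\sqrt\varepsilon$. Letting $\varepsilon\to 0$ forces $A=0$, a contradiction. Therefore $\mu$ has an atom $c\,\delta_{x_0}$ with $c>0$, which is \eqref{concent}.

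The main obstacle I anticipate is the passage between the quantitative inequality \eqref{Mos1} and the Orlicz norm bound on the small pieces, uniformly in $n$: one needs that $\|\nabla(\chi_j u_n)\|_{L^2}$ small plus $\|\chi_j u_n\|_{L^2}$ small together imply $\|\chi_j u_n\|_{\mathcal L}$ small, and getting the $L^2$ smallness from $u_n\rightharpoonup 0$ on bounded sets is clean, but controlling the large-value set and choosing the number and radii of the covering balls uniformly requires the non-atomicity assumption to be used correctly (a ball of small $\mu$-mass need not have small radius, but by regularity of the finite measure $\mu$ every point has a small-mass ball around it, and compactness of $\overline{B_R}$ extracts a finite subcover). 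A secondary technical point is ensuring the partition of unity subordinate to this cover does not inflate the energy estimates beyond the bounded-overlap constant, which is handled by a standard construction.
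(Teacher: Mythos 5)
The paper only recalls this proposition from Lions without proof, so I judge your argument on its own terms; the covering/cut-off/Trudinger--Moser strategy is indeed the standard route, but two steps fail as written. The first is your use of the hypothesis $\lim_{R\to\infty}\limsup_n\|u_n\|_{L^2(|x|>R)}=0$ to "trap a fixed amount of exponential mass inside a fixed large ball" and to assert that the limit measure $\mu$ is supported in $\overline{B_R}$. The $L^2$ norm does not dominate the Orlicz norm (inequality \eqref{leborlicz} goes the other way), so nothing in that hypothesis prevents all the Orlicz mass and all the Dirichlet energy from escaping to infinity while the $L^2$ tails vanish: for $u_n(x)=f_{\alpha_n}(x-x_n)$ with $\alpha_n\to\infty$ and $|x_n|\to\infty$ one has $\|u_n\|_{L^2}\to0$ (so the $L^2$ tail condition holds trivially), $u_n\rightharpoonup0$, $\|u_n\|_{\cL}\to\frac{1}{\sqrt{4\pi}}$, and yet $|\nabla u_n|^2\,dx\rightharpoondown 0$ vaguely, with no atom anywhere. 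So the confinement step cannot be derived from the stated $L^2$ assumption; it requires compactness at infinity in the Orlicz norm, i.e. the hypothesis \eqref{noradmain-assum4} used in Theorem \ref{noradmain} (or, as in Lions's original setting, working on a bounded domain), and your proof must say explicitly where this tightness comes from.

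The second gap is the final assembly. Your per-piece bound is fine --- indeed it follows at once from \eqref{Mos2} (or \eqref{2D-embed}) applied to $\chi_j u_n/(2\sqrt{\varepsilon})$, since $\|\chi_j u_n\|_{H^1}^2\leq 2\varepsilon+o(1)$, giving $\|\chi_j u_n\|_{\cL}\leq\sqrt{\varepsilon/\pi}$ for $n$ large, with no need for the delicate large-value discussion around \eqref{Mos1} --- but summing these \emph{norms} by the triangle inequality only yields $\limsup_n\|u_n\|_{\cL}\leq N(\varepsilon)\,C\sqrt{\varepsilon}$, and the number of balls $N(\varepsilon)$ in the cover blows up as $\varepsilon\to0$, so this does not force $A=0$. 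Nor can you first let $n\to\infty$ at fixed $\varepsilon$: small gradient norm plus vanishing $L^2$ norm does not make the Orlicz norm vanish (rescaled Moser examples $\delta f_{\alpha_n}$ have $\|\nabla(\delta f_{\alpha_n})\|_{L^2}=\delta$, $\|\delta f_{\alpha_n}\|_{L^2}\to0$, $\|\delta f_{\alpha_n}\|_{\cL}\to\delta/\sqrt{4\pi}$). The repair is to sum at the level of the \emph{integrals}, which are additive over the cover: fix $\lambda>0$, choose $\varepsilon\approx\pi\lambda^2$, and show that for each $j$, $\Int_{B(y_j,r_j)}\bigl({\rm e}^{|u_n|^2/\lambda^2}-1\bigr)\,dx\to0$ as $n\to\infty$; this is exactly where \eqref{Mos1} with $\|\chi_j u_n\|_{L^2}\to0$ (Rellich) enters, combined with a Chebyshev/Cauchy--Schwarz argument on the set where $|\chi_j u_n|$ is large to remove the factor $1+|v|^2$ --- your remark that this region is "most favorable" is not by itself a proof. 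Summing finitely many $o(1)$ integrals and handling the exterior region by the (strengthened) tail hypothesis then gives $\limsup_n\|u_n\|_{\cL}\leq\lambda$ for every $\lambda>0$, the desired contradiction.
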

\begin{rems}\quad\\
\noindent$\bullet$
The hypothesis of compactness at infinity
\begin{equation}\label{compl2}
\lim_{R\to\infty}
\limsup_{n\to\infty}\|u_n\|_{L^2(|x|>R)}=0
\end{equation}
is necessary to get  \eqref{concent}. For instance,
 $u_n(x)=\frac{1}{n}\,{\rm e}^{-|\frac{x}{n}|^2}$   satisfies $\|u_n\|_{L^2}=\sqrt{\frac{\pi}{2}}$,
 $\|\nabla u_n\|_{L^2}\to 0$ and  $\|u_n\|_{\cL}\geq \sqrt{\frac{\pi}{2\kappa}} $. Indeed,
 $$
 \|u_n\|_{L^2}^2=\int_{\R^2}\,{\rm e}^{-2|y|^2}\,dy=2\pi\int_0^\infty\,r {\rm e}^{-2 r^2}\,dr=\frac{\pi}{2}\,
 $$
and the last assertion follows from the more general estimate
\bq
\label{leborlicz}
\frac{1}{\sqrt{\kappa}}\,\|u\|_{L^2}\leq \|u\|_{\cL}\,.
\eq
In fact by virtue of Rellich's theorem, the compactness at infinity assumption \eqref{compl2} coupled with the boundedness in $H^1$
implies the strong convergence of the sequence $(u_n)$  to zero in $L^2$.\\
\noindent$\bullet$
Let us notice that in the case where
\begin{equation}\label{conpoint}|\nabla u_n(x)|^2\,dx\rightharpoondown\, c\,\delta_{x_0},\, \mbox{as} \,\, n\to\infty\end{equation} weakly in the sense of measures, we have compactness in the Orlicz space away from the point $x_0$. More precisely, we have the following lemma:
\begin{lem}
\label{genradaway} Let $(u_n)$ be a bounded sequence in
$H^1(\R^2)$ weakly convergent to $0$, satisfying the hypothesis of compactness at infinity \eqref{compl2} and the hypothesis of concentration of the total mass \eqref{conpoint}.
Then for any nonnegative reals ~$M$ and ~$\alpha$, we have
\begin{equation}
\label{nonraway} \int_{|x - x_0| >  M } \left({\rm
e}^{|\alpha \,u_n(x)|^2}-1\right)\,dx \to 0, \quad n \to \infty.
\end{equation}
\end{lem}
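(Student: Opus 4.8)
The plan is to reduce the statement to the sharp Trudinger--Moser inequality \eqref{Mos1} by showing that on the region $\{|x-x_0|>M\}$ the sequence $u_n$ has small $H^1$ norm away from $x_0$, and that its $L^2$ norm there is controlled. Since $(u_n)$ is bounded in $H^1$ and satisfies the compactness at infinity hypothesis \eqref{compl2}, Rellich's theorem gives $u_n\to 0$ strongly in $L^2(\R^2)$; in particular $\|u_n\|_{L^2(|x-x_0|>M)}\to 0$. The concentration hypothesis \eqref{conpoint} says $|\nabla u_n|^2\,dx\rightharpoondown c\,\delta_{x_0}$, so for any fixed $M>0$ the mass of $|\nabla u_n|^2$ on the complement of a ball around $x_0$ tends to $0$: more precisely $\limsup_n\int_{|x-x_0|>M}|\nabla u_n|^2\,dx\le c\,\delta_{x_0}(\{|x-x_0|>M\})=0$ (one should be slightly careful and test against a cutoff, e.g. choose a continuous $\chi$ with $\mathbf{1}_{|x-x_0|>M}\le\chi\le\mathbf{1}_{|x-x_0|>M/2}$, since $\{|x-x_0|>M\}$ is open and weak-$*$ convergence only gives the inequality for open sets after such an approximation). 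Hence $\|\nabla u_n\|_{L^2(|x-x_0|>M)}\to 0$.

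Next I would introduce a cutoff function $\psi\in C^\infty(\R^2)$ with $\psi\equiv 0$ on $\{|x-x_0|\le M/2\}$, $\psi\equiv 1$ on $\{|x-x_0|\ge M\}$, and $|\nabla\psi|\le C/M$, and set $v_n=\psi u_n\in H^1(\R^2)$. Then
\[
\|\nabla v_n\|_{L^2(\R^2)}\le \|\nabla u_n\|_{L^2(|x-x_0|>M/2)}+\frac{C}{M}\|u_n\|_{L^2(|x-x_0|>M/2)}\longrightarrow 0
\]
by the two limits just established. Fix $\alpha\ge 0$. For $n$ large enough, $\|\nabla v_n\|_{L^2}\le\varepsilon_n$ with $\varepsilon_n\to 0$; in particular eventually $\alpha\|\nabla v_n\|_{L^2}\le 1$ (if $\alpha=0$ the statement is trivial, so assume $\alpha>0$), so that $w_n:=\alpha v_n$ satisfies $\|\nabla w_n\|_{L^2}\le 1$ and we may apply \eqref{Mos1} to $w_n$:
\[
\int_{\R^2}\frac{{\rm e}^{4\pi|\alpha v_n|^2}-1}{1+|\alpha v_n|^2}\,dx\le C\,\|\alpha v_n\|_{L^2(\R^2)}^2=C\alpha^2\|u_n\|_{L^2(|x-x_0|>M/2)}^2\longrightarrow 0.
\]

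Finally I would convert this back to the desired integral \eqref{nonraway}. On the set $\{|x-x_0|>M\}$ we have $v_n=u_n$, so it suffices to bound $\int_{|x-x_0|>M}({\rm e}^{|\alpha u_n|^2}-1)\,dx$ by a multiple of the quantity just shown to vanish, plus a vanishing remainder. Split the domain according to whether $|u_n(x)|$ is large or small. On $\{|u_n|\le 1\}\cap\{|x-x_0|>M\}$ one has ${\rm e}^{|\alpha u_n|^2}-1\le C_\alpha|u_n|^2$, whose integral is bounded by $C_\alpha\|u_n\|_{L^2}^2\to 0$. On $\{|u_n|> 1\}\cap\{|x-x_0|>M\}$ one has, crudely, ${\rm e}^{|\alpha u_n|^2}-1\le {\rm e}^{|\alpha u_n|^2}\le (1+|\alpha u_n|^2)\cdot\frac{{\rm e}^{4\pi|\alpha u_n|^2}-1}{1+|\alpha u_n|^2}\cdot\frac{{\rm e}^{|\alpha u_n|^2}}{{\rm e}^{4\pi|\alpha u_n|^2}-1}$; since $\alpha^2<4\pi$ would not be guaranteed I would instead simply note that for $|u_n|>1$ the factor $1+|\alpha u_n|^2\le(1+\alpha^2)|u_n|^2\cdot$const and absorb powers of $|u_n|$ into the exponential at the cost of replacing $4\pi$ by a slightly smaller constant, or — more cleanly — apply \eqref{Mos1} with $\alpha$ replaced by a value large enough that $4\pi|v_n|^2$ dominates $|\alpha u_n|^2+\log(1+|\alpha u_n|^2)$ on $\{|u_n|>1\}$. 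The main obstacle, and the only genuinely delicate point, is precisely this last conversion: passing from the ``regularized'' exponential $({\rm e}^{4\pi|u|^2}-1)/(1+|u|^2)$ controlled by \eqref{Mos1} to the bare exponential ${\rm e}^{|\alpha u|^2}-1$ in \eqref{nonraway}, uniformly in $n$. This is handled by exploiting the gap between the exponent $\alpha^2$ (fixed) appearing in the conclusion and the constant $4\pi\|\nabla v_n\|_{L^2}^{-2}\to\infty$ available in the hypothesis of the sharp inequality: for $n$ large, $\alpha^2\|v_n\|$-type quantities are so small that the subcritical exponent $|\alpha u_n|^2$ is dominated by the critical one in \eqref{Mos1} applied to a suitably rescaled function, and a routine splitting as above closes the argument. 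The remaining estimates (choice of cutoff, the elementary inequality ${\rm e}^t-1\le t\,{\rm e}^t$ and ${\rm e}^t-1\le Ct$ for $t\le1$) are standard.
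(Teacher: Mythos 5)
Your proof follows essentially the same route as the paper: the same cutoff $\varphi$ vanishing on $\{|x-x_0|\le M/2\}$ and equal to $1$ on $\{|x-x_0|\ge M\}$, the observation that $\|\varphi u_n\|_{H^1}\to 0$ (gradient part from the concentration hypothesis, $L^2$ part from Rellich together with compactness at infinity), and then the Trudinger--Moser inequality \eqref{Mos1} applied to $\alpha\varphi u_n$. The final conversion you single out as delicate is in fact immediate and involves no comparison of $\alpha^2$ with $4\pi$: having applied \eqref{Mos1} to $w_n=\alpha\varphi u_n$ (legitimate since $\|\nabla w_n\|_{L^2}\to 0$), the elementary pointwise bound $({\rm e}^{t}-1)(1+t)\lesssim {\rm e}^{4\pi t}-1$ for $t\ge 0$, taken at $t=|w_n(x)|^2$, gives ${\rm e}^{|\alpha\varphi u_n|^2}-1\lesssim \big({\rm e}^{4\pi|w_n|^2}-1\big)/\big(1+|w_n|^2\big)$, which is precisely the step the paper leaves implicit.
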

\begin{proof}
For fixed ~$M$,  let us  consider ~$\varphi \in {\cC}^\infty(\R^2)$ satisfying ~$0 \leq \varphi \leq 1 $ and such that
\begin{eqnarray*}
  \left\{
\begin{array}{cllll}\varphi(x)= \,  0   \quad&\mbox{if}&\quad
|x-x_0|\leq \frac M 2  \quad \mbox{and}\\\\ \varphi(x)= \,  1  \quad&\mbox{if}&\quad
|x-x_0|\geq M.
\end{array}
\right.
\end{eqnarray*}
It is obvious that
 $$ \|\varphi\,u_n\|_{H^1} \stackrel{n\to\infty}\longrightarrow 0,$$
which implies for $n$ big enough thanks to Trudinger-Moser estimate (\ref{Mos1})
$$ \int_{\R^2} \left({\rm
e}^{|\alpha \,\varphi(x) u_n(x)|^2}-1\right)\,dx \lesssim \|\varphi\,u_n\|_{L^2}^2 \to 0, \quad \mbox{as} \quad n \to \infty$$
and ends the proof of the lemma.
\end{proof}

\noindent$\bullet$ In fact in \cite{Lions1, Lions2}, P. -L.  Lions relies the defect of compactness of Sobolev embedding in the Orlicz space as well as  in Lebesgue spaces to concentration phenomena.  Although these  results are  expressed in the same manner by means of defect measures, they are actually of different nature. Indeed, we shall prove  in this article that the lack of compactness of Sobolev embedding in the Orlicz space
can be described in terms of an orthogonal asymptotic decomposition whose
elements  are    completely
different  from the ones involving in the  decomposition derived by P. G\'erard in \cite{Ge2} in the framework of Sobolev embedding  in Lebesgue spaces or by S. Jaffard in \cite{jaffard} in the  more general case of  Riesz potential   spaces. \\
\noindent$\bullet$ Let us mention that the lack of compactness  was also studied
in \cite{BC} for a bounded sequence in $H_0^{1}(D, \R^3)$ of solutions of an elliptic problem, with $D$
the open unit disk of $\R^2$ and in \cite{St} and \cite{So} for the critical injections
of $W^{1,2}(\Omega)$ in Lebesgue space
and of $W^{1,p}(\Omega)$ in Lorentz spaces
respectively, with $\Omega$ a bounded domain of $\R^d$.
Similarly, the issue was addressed in \cite{ST} in an abstract Hilbert space
framework and in \cite{Benameur} in  the Heisenberg group context.\\
\noindent$\bullet$
Recently  in \cite{BCG}, the
wavelet-based profile decomposition introduced by S. Jaffard in \cite{jaffard} was revisited in order to treat a larger range of examples of critical embedding of functions spaces $ X \hookrightarrow Y $
including Sobolev, Besov, Triebel-Lizorkin, Lorentz, H\"older and BMO spaces and for that purpose, two generic properties on the spaces $X$ and $Y$ was  identified to  build the profile decomposition in a unified way. (One can consult ~\cite{BCD} and the references therein for an introduction to spaces listed above).
 \end{rems}
In line with  the results of P. -L.  Lions \cite{Lions1, Lions2},  we investigated  in \cite{BMM} the lack of
compactness of Sobolev embedding of $H^1_{rad}(\R^2)$ in the Orlicz
space ${\mathcal L}(\R^2)$ following the approach of P. G\'erard in
\cite{Ge2} and S. Jaffard in \cite{jaffard} which consists in
characterizing the lack of compactness of the critical Sobolev embedding in Lebesque spaces in  terms   of orthogonal profiles.
In order to recall our  result in a clear
way, let us first  introduce some objects.
\begin{defi}
\label{ortho} We shall denote by  scale any sequence $\underline{\alpha}:=(\alpha_n)$
of positive real numbers going to infinity and we  shall say that two
scales ${\underline\alpha}$ and ${\underline\beta}$ are orthogonal (in short ${\underline\alpha} \perp {\underline\beta}$) if
    $$
   \Big|\log\left({\beta_n}/{\alpha_n}\right)\Big|\to\infty.
    $$
We shall designate by profile any function $\psi$ belonging to the set
$$
{\cP}:=\Big\{\;\psi\in L^2(\R,{\rm e}^{-2s}ds);\;\;\; \psi'\in
L^2(\R),\;\psi_{|]-\infty,0]}=0\,\Big\}.
$$
\end{defi}
\begin{rems}\label{remsrad2}\quad\\
\noindent$\bullet$ The limitation for scales tending to infinity  is justified by the behavior of $\|f_{\alpha}\|_{\mathcal L}$ stated in \eqref{behav}.\\
\noindent$\bullet$ The set ${\cP}$ is invariant under positive translations. More precisely, if $\psi\in{\cP}$ and $a\geq 0$ then $\psi_a(s):=\psi(s -a)$ belongs to ${\cP}$. \\
\noindent$\bullet$
Let us remark that each profile belongs to the H\"older space $C^{\frac 1 2} (\R)$. In particular, any $\psi \in {\cP}$ is continuous.
\\
\noindent$\bullet$ It will be useful to notice that
\begin{equation}\label{behavpsi}
\frac{\psi(s)}{\sqrt{s}} \to 0 \quad\mbox{as}\quad s \to 0 \quad\mbox{and}\quad\mbox{as}\quad s \to \infty.
\end{equation}
Indeed, since $\psi' \in L^2$ and $$ \Big|\psi(s)\Big| = \Big|\int_0^{s}\,\psi'(\tau)\,d\tau\Big| \leq \sqrt{s}\left(\int_0^{s}\,\psi'^2(\tau)\,d\tau\right)^{1/2},$$
we  get the first part of the assertion \eqref{behavpsi}. Similarly, for  $s > A$ we have
$$ \Big|\frac{\psi(s)}{\sqrt{s}}\Big| \leq \Big|\frac{\psi(s)- \psi(A)}{\sqrt{s}}\Big|+ \Big|\frac{\psi(A)}{\sqrt{s}}\Big|\leq \frac{\sqrt{s- A}}{\sqrt{s}} \left(\int_A^{s}\,\psi'^2(\tau)\,d\tau\right)^{1/2}+ \Big|\frac{\psi(A)}{\sqrt{s}}\Big|, $$which easily ensures that $\frac{\psi(s)}{\sqrt{s}} \to 0$ as $s$ tends to $\infty$. Now $\psi$ being a continuous function, we deduce that the $\displaystyle\sup_{s>0}\,\frac{|\psi(s)|}{\sqrt{s}}$ is achieved on $]0, +\infty[$. \\
\noindent$\bullet$ Finally, let us observe that for a scale $\underline{\alpha}$ and a profile $\psi$, if we denote by
$$ g_{\underline{\alpha}, \psi} (x):=\sqrt{\frac{\alpha_n}{2\pi}}\;\psi\Big(\frac{-\log|x|}{\alpha_n}\Big)\,,$$
then for any $\lambda>0$,
\begin{equation}\label{inara}g_{\underline{\alpha}, \psi}=g_{\lambda\underline{\alpha}, \psi_{\lambda}},\end{equation}
where $ \psi_{\lambda}(t)=\frac{1}{\sqrt{\lambda}}\,\psi(\lambda
t)$.
\end{rems}

In \cite{BMM}, we established that
the lack of compactness of the embedding
$$
H^1_{rad}\hookrightarrow{\mathcal L},
$$
can be reduced to generalization of  the  example by Moser. More precisely, we  proved
that the lack of compactness of this embedding can be described in
terms of an asymptotic decomposition  as follows:
\begin{thm}
\label{main} Let $(u_n)$ be a bounded sequence in
$H^1_{rad}(\R^2)$ such that \bq \label{main-assum1}
u_n\rightharpoonup 0, \eq \bq \label{main-assum2}
\limsup_{n\to\infty}\|u_n\|_{\mathcal L}=A_0>0, \quad \quad
\mbox{and} \eq \bq \label{main-assum3} \lim_{R\to\infty}
\limsup_{n\to\infty}\|u_n\|_{L^2(|x|>R)}=0. \eq Then, there
exists a sequence $(\underline{\alpha}^{(j)})$ of pairwise
orthogonal scales and a sequence of profiles $(\psi^{(j)})$ in
${\cP}$ such that, up to a subsequence extraction, we have for all
$\ell\geq 1$, \bq \label{decomp}
u_n(x)=\Sum_{j=1}^{\ell}\,\sqrt{\frac{\alpha_n^{(j)}}{2\pi}}\;\psi^{(j)}\left(\frac{-\log|x|}{\alpha_n^{(j)}}\right)+{\rm
r}_n^{(\ell)}(x),\quad\limsup_{n\to\infty}\;\|{\rm
r}_n^{(\ell)}\|_{\mathcal
L}\stackrel{\ell\to\infty}\longrightarrow 0. \eq Moreover, we have
the following orthogonality equality \bq \label{ortogonal} \|\nabla
u_n\|_{L^2}^2=\Sum_{j=1}^{\ell}\,\|{\psi^{(j)}}'\|_{L^2}^2+\|\nabla
{\rm r}_n^{(\ell)}\|_{L^2}^2+\circ(1),\quad n\to\infty. \eq
\end{thm}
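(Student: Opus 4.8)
The plan is to transport the problem onto the real line by the change of variables that linearises the radial Dirichlet integral, and then to carry out a G\'erard--Jaffard type iterative extraction of profiles. To a radial $u\in H^1(\R^2)$ associate $w(s):=\sqrt{2\pi}\,u({\rm e}^{-s})$; then $\|\nabla u\|_{L^2(\R^2)}^2=\|w'\|_{L^2(\R)}^2$, $\|u\|_{L^2(\R^2)}^2=\Int_\R|w(s)|^2\,{\rm e}^{-2s}\,ds$, and $\Int_{\R^2}({\rm e}^{|u|^2/\lambda^2}-1)\,dx=2\pi\Int_\R({\rm e}^{|w(s)|^2/(2\pi\lambda^2)}-1)\,{\rm e}^{-2s}\,ds$ for every $\lambda>0$, while the Moser concentration $g_{\underline\alpha,\psi}$ corresponds to $s\mapsto\sqrt{\alpha_n}\,\psi(s/\alpha_n)$. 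Writing $w_n$ for the transform of $u_n$, hypotheses \eqref{main-assum1}--\eqref{main-assum3} together with Rellich's theorem give that $(w_n)$ is bounded in $\dot H^1(\R)$ and $w_n\to0$ in $L^2(\R,{\rm e}^{-2s}\,ds)$; the radial (Strauss) bound $|u_n(x)|\lesssim|x|^{-1/2}\|u_n\|_{H^1}$, together with the compactness of the Sobolev embedding on bounded intervals, will allow us to localise all the Orlicz mass to the region where $|x|$ is small, i.e. where $s$ is large. The structural fact used throughout is that the parabolic rescaling $f\mapsto\alpha_n^{-1/2}f(\alpha_n\,\cdot)$ is an isometry of $\dot H^1(\R)$.

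For a sequence $(f_n)$ sharing these properties, set $N(f_n):=\sup\|\varphi'\|_{L^2(\R)}$, the supremum running over all scales $\underline\alpha$ and all subsequential weak $\dot H^1$-limits $\varphi$ of the rescalings $t\mapsto\alpha_n^{-1/2}f_n(\alpha_n t)$; by the scaling isometry $N(f_n)\le\liminf_n\|f_n'\|_{L^2}<\infty$, and each such $\varphi$ lies in $\cP$ --- its support condition $\varphi_{|]-\infty,0]}=0$ being exactly what $w_n\to0$ in $L^2({\rm e}^{-2s}ds)$ yields after rescaling. Put $w_n^{(0)}:=w_n$; if $N(w_n^{(j)})=0$ we stop, and otherwise we choose a scale $\underline\alpha^{(j+1)}$ and a profile $\psi^{(j+1)}\in\cP$ with $\|{\psi^{(j+1)}}'\|_{L^2}\ge\tfrac12N(w_n^{(j)})$, pass to the subsequence realising this weak limit, and set $w_n^{(j+1)}:=w_n^{(j)}-\sqrt{\alpha_n^{(j+1)}}\,\psi^{(j+1)}(\cdot/\alpha_n^{(j+1)})$; a diagonal argument keeps a single common subsequence. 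Because $w_n^{(j+1)}$ is obtained by subtracting the weak limit at scale $\underline\alpha^{(j+1)}$, its rescaling at that scale tends weakly to $0$, and a standard elementary bookkeeping --- using only the scaling isometry and the fact that a Moser concentration rescaled at an orthogonal scale tends weakly to $0$ --- shows that the $\underline\alpha^{(j)}$ can be taken pairwise orthogonal and gives the Pythagorean identity $\|{w_n^{(j)}}'\|_{L^2}^2=\|{w_n^{(j+1)}}'\|_{L^2}^2+\|{\psi^{(j+1)}}'\|_{L^2}^2+\circ(1)$, the cross term vanishing by the same weak convergence. Summing, $\Sum_{j\ge1}\|{\psi^{(j)}}'\|_{L^2}^2\le\liminf_n\|\nabla u_n\|_{L^2}^2<\infty$, so $\|{\psi^{(j)}}'\|_{L^2}\to0$ and $\delta_\ell:=N(w_n^{(\ell)})\le2\|{\psi^{(\ell+1)}}'\|_{L^2}\to0$. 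Transporting back, $u_n=\Sum_{j=1}^\ell g_{\underline\alpha^{(j)},\psi^{(j)}}+{\rm r}_n^{(\ell)}$ with ${\rm r}_n^{(\ell)}$ the transform of $w_n^{(\ell)}$, and the telescoped identity is \eqref{ortogonal}.

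The heart of the matter --- which yields the first profile and controls the remainder at one stroke --- is the quantitative dichotomy: for $(f_n)$ as above, $\limsup_n\|v_n\|_{\cL}\lesssim N(f_n)$, $v_n$ being the transform of $f_n$ (so $N(f_n)=0$ forces $\|v_n\|_{\cL}\to0$). I would argue by contradiction: if $\limsup_n\|v_n\|_{\cL}>\lambda$ then, along a subsequence, $2\pi\Int_\R({\rm e}^{|f_n(s)|^2/(2\pi\lambda^2)}-1)\,{\rm e}^{-2s}\,ds>\kappa$. Fixing $R$ large, the Strauss bound absorbs the $\{|x|>R\}$ contribution, and the uniform convergence $f_n\to0$ on bounded $s$-intervals (compactness of the embedding and $L^2({\rm e}^{-2s}ds)$-nullity, plus \eqref{behavpsi} for the profile corrections) absorbs the $\{-\log R<s\le T\}$ contribution; hence $\Int_T^\infty{\rm e}^{|f_n(s)|^2/(2\pi\lambda^2)-2s}\,ds$ is bounded below by a fixed positive constant, for every $T$ and all large $n$. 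As $|f_n(s)|^2\lesssim s$ (uniform $\dot H^1$-bound), this forces, for each $T$, a point $s_n>T$ with $|f_n(s_n)|^2/(2\pi\lambda^2)>2s_n-2\log s_n-C$ for a fixed $C$ --- else the integral is $\le{\rm e}^{-C}\Int_T^\infty s^{-2}\,ds$ --- and a diagonal choice of $T$ gives $s_n\to\infty$ with $|f_n(s_n)|^2/s_n\gtrsim\lambda^2$. Then $\underline\sigma:=(s_n)$ is a scale, the rescalings $t\mapsto s_n^{-1/2}f_n(s_n t)$ are bounded in $\dot H^1(\R)\hookrightarrow C^{1/2}$ and converge, along a subsequence, locally uniformly to some $\varphi\in\cP$ with $|\varphi(1)|^2=\lim_n|f_n(s_n)|^2/s_n\gtrsim\lambda^2$, so $N(f_n)\ge\|\varphi'\|_{L^2}\ge|\varphi(1)|\gtrsim\lambda$; letting $\lambda\uparrow\limsup_n\|v_n\|_{\cL}$ proves the dichotomy. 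With $f_n=w_n$ this forces $N(w_n)\gtrsim A_0>0$, so the process genuinely starts; with $f_n=w_n^{(\ell)}$ it gives $\limsup_n\|{\rm r}_n^{(\ell)}\|_{\cL}\lesssim\delta_\ell\to0$ as $\ell\to\infty$, which is the remainder assertion in \eqref{decomp}. (If the process stops after finitely many steps, the same bound gives ${\rm r}_n^{(\ell)}\to0$ in $\cL$ directly, and one pads with $\psi^{(j)}=0$.)

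The main obstacle is exactly this dichotomy: manufacturing a genuine concentration \emph{scale}, with $\alpha_n\to\infty$, out of the mere non-vanishing of an Orlicz integral, and doing so \emph{quantitatively} so that one and the same inequality also bounds the remainder. The delicate point is to guarantee that the location $s_n$ of the extracted bump tends to $+\infty$, so that $(s_n)$ is an admissible scale rather than a bounded window --- a bounded window would carry no Orlicz mass --- and this is precisely where the radial $L^\infty$-decay $|u_n(x)|\lesssim|x|^{-1/2}\|u_n\|_{H^1}$ away from the origin and the uniform smallness of $f_n$ on bounded $s$-ranges are indispensable; it is also the step that has no analogue in the non-radial setting, which is why a different strategy is needed there. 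The remaining ingredients --- the logarithmic change of variables, the scale-orthogonality computations, the Pythagorean bookkeeping, and the diagonal subsequence extraction --- are routine.
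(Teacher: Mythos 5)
Your plan is essentially the paper's own argument (the proof of \cite{BMM} sketched in Subsection \ref{method}): pass to logarithmic variables, use the radial estimate \eqref{radest} away from the origin and local compactness to show that the weighted exponential integral forces points $s_n\to\infty$ with $|v_n(s_n)|\gtrsim\sqrt{s_n}$ (this is exactly \eqref{heartest}), rescale and use the $C^{1/2}$ bound to extract a profile with $\|\psi'\|_{L^2}\gtrsim A_0$, then iterate with the Pythagorean energy bookkeeping and control the remainder by the same quantitative estimate. Your packaging through the G\'erard-type functional $N(f_n)$ is only a cosmetic variant of the paper's extraction with the explicit constant $C\,A_0$, so the approach matches and the proposal is sound.
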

\begin{rems}\label{remsradial}\quad\\
\noindent$\bullet$ Note that the  example by Moser can be written under the form
$$
f_{\alpha_n}(x)=\sqrt{\frac{\alpha_n}{2\pi}}\;{\mathbf L}\,\left(\frac{-\log|x|}{\alpha_n}\right),
$$
where
\begin{eqnarray*}
{\mathbf L}(s)&=&\; \left\{
\begin{array}{cllll}0 \quad&\mbox{if}&\quad
s\leq 0,\\ s \quad
&\mbox{if}&\quad 0\leq s\leq 1 ,\\
1 \quad&\mbox{if}&s\geq 1.
\end{array}
\right.
\end{eqnarray*}
Let us also point out that $f_{\alpha_n}$ is the minimum energy function  which is equal to the value $\sqrt{\frac{\alpha_n}{2\pi}}$ on the ball $B(0, {\rm e}^{- \alpha_n})$ and which vanishes outside the unit ball (see Lemma \ref{minEner} for more details).\\
\noindent$\bullet$ The approach that we adopted to prove that  result uses in a crucial way the radial setting and particularly the fact we deal with bounded functions far away from the origin thanks to the well known radial estimate (see for instance \cite{BMM} for a sketch of proof).
\begin{equation}
\label{radest}|u(r)|\leq
\frac{C}{\sqrt{r}}\,\|u\|_{L^2}^{\frac{1}{2}}\|\nabla
u\|_{L^2}^{\frac{1}{2}}.\end{equation}
\noindent$\bullet$ It should be emphasized that, contrary to the case of
Sobolev embedding in Lebesgue spaces, where the asymptotic
decomposition derived by P. G\'erard in \cite{Ge2} leads to $$
\|u_n\|^p_{L^p} \to \sum_{j\geq 1} \|\psi^{(j)}\|^p_{L^p},$$ Theorem
\ref{main}  yields that
\begin{equation}
\label{OrliczMax} \|u_n\|_{\cL}\to \sup_{j\geq
1}\,\left(\lim_{n\to\infty}\,\|g_n^{(j)}\|_{\cL}\right),
\end{equation}
where~$g_n^{(j)}(x) = \sqrt{\frac{\alpha_n^{(j)}}{2\pi}}\;\psi^{(j)}\left(\frac{-\log|x|}{\alpha_n^{(j)}}\right)$. \\
Let us also notice that
\bq \label{profile}
\Lim_{n\to\infty}\,\|g_n^{(j)}\|_{{\cL}}=\frac{1}{\sqrt{4\pi}}\,\max_{s>0}\;\frac{|\psi^{(j)}(s)|}{\sqrt{s}}. \eq
For a detailed proof of \eqref{OrliczMax}  and \eqref{profile}, see Propositions 1.15  and 1.18 in \cite{BMM}. A consequence of \eqref{OrliczMax} is that the first profile in Decomposition \eqref{decomp} can be chosen such that up to extraction
\bq \label{profile1} \limsup_{n\to\infty}\|u_n\|_{\mathcal L}=A_0 = \lim_{n\to\infty}\left\|  \sqrt{\frac{\alpha_n^{(1)}}{2\pi}}\;\psi^{(1)}\left(\frac{-\log|x|}{\alpha_n^{(1)}}\right)  \right\|_{\mathcal L}.\eq
Otherwise, under the  orthogonality assumption of the scales and identity \eqref{inara}, we can suppose without loss of generality that for any $j$
$$ \Lim_{n\to\infty}\,\|g_n^{(j)}\|_{{\cL}}=\frac{1}{\sqrt{4\pi}}\,\max_{s>0}\;\frac{|\psi^{(j)}(s)|}{\sqrt{s}} = \frac{1}{\sqrt{4\pi}}\, |\psi^{(j)}(1)|.$$
\noindent$\bullet$ As a by product of the above remark, it may be noted that while the energy of an elementary concentration $g_n^{(j)}$ remains invariant under  translation of  the profiles~$\psi^{(j)}$, it is not the same for the Orlicz norm. For instance, if we consider the elementary concentration
$$ g_{\alpha_n}(x) = \sqrt{\frac{\alpha_n}{2\pi}}\;{\mathbf L}_{a}\,\left(\frac{-\log|x|}{\alpha_n}\right),  \quad a \geq 0$$ inferred from  example by Moser $f_{\alpha_n}$ by translating the profile, it comes $$\|g_{\alpha_n}\|_{\mathcal L}\rightarrow \frac{1}{\sqrt{4\pi (a+1)}},$$
while $\|f_{\alpha_n}\|_{\mathcal L}\rightarrow \frac{1}{\sqrt{4\pi }}$ as $ n $ tends to infinity.\\
\noindent$\bullet$ Let us observe that each elementary concentration $g_n^{(j)}$ is supported in the unit ball. This is due to the fact that in the radial case,  any bounded sequence in $H^1( \R^2)$ is compact away from the origin in  the Orlicz space (see Lemma \ref{genradaway}).\\
\noindent$\bullet$ Taking advantage of Proposition \ref{prop2D-embed},  relations \eqref{decomp} and \eqref{ortogonal}, we deduce that in the case where
\bq \label{concexample}\|u_n\|_{{\mathcal
L}}\sim \frac{1}{\sqrt{4\pi}}\|u_n\|_{H^1},\eq where the symbol
$\sim$ means that the difference goes to zero   as $n$ tends to infinity, we have necessary
$$u_n(x)=\sqrt{\frac{\alpha_n}{2\pi}}\;\psi\left(\frac{-\log|x|}{\alpha_n}\right)+{\rm
r}_n(x),\quad \|{\rm
r}_n\|_{H^1} \to 0 \quad \mbox{as} \quad n\to\infty, $$ with \begin{equation} \label{formprofile}\psi(s)=\frac{1}{\sqrt{s_0}}\,{\mathbf L}(\frac s {s_0}), \quad \mbox{for some} \quad s_0 > 0.\end{equation}
Indeed, by virtue of  \eqref{profile}, equivalence \eqref{concexample}  leads to  $\|u_n\|_{{\mathcal
L}}\sim \frac{1}{\sqrt{4\pi}}\, \displaystyle\max_{s>0}\,\frac{|\psi(s)|}{\sqrt{s}}\,$. Therefore thanks to \eqref{2D-embed}, we get
$$ \|\psi'\|_{L^2} = \max_{s>0}\,\frac{|\psi(s)|}{\sqrt{s}}.$$
Thus, there exists $s_0>0$ such that
$$
\|\psi'\|_{L^2}=\frac{|\psi(s_0)|}{\sqrt{s_0}}\,.
$$
Since $\frac{|\psi(s_0)|}{\sqrt{s_0}} \leq
\left(\int_0^{s_0}\,|\psi'(t)|^2\,dt\right)^{1/2}\,$, we infer that
 $\psi'=0$
on $[s_0,+\infty[$ and then by continuity
$\psi(s)=\sqrt{s_0}$ for any $s\geq s_0$. Finally, the equality case of the Cauchy-Schwarz's inequality
$$ \Big|\psi(s_0)\Big|= \Big|\int_0^{s_0}\,\psi'(\tau)\,d\tau\Big|=\sqrt{s_0}\left(\int_0^{s_0}\,\psi'^2(\tau)\,d\tau\right)^{1/2},
$$
leads to $\psi(s)=\frac{s}{\sqrt{s_0}}$ for  $s\leq s_0$ which ends the proof of claim \eqref{formprofile}. \\
\noindent$\bullet$  Condition \eqref{concexample}  can be understood  as a condition of  concentration of the whole mass and plays a crucial role in the qualitative study of solutions of nonlinear wave equations with exponential growth (see \cite{BMM} for more details).  \\
\noindent$\bullet$ Compared with the decomposition in \cite{Ge2}, we see that there is no cores in \eqref{decomp}. This is justified by the radial setting. \\
\noindent$\bullet$ Note that
up to changing the remainders, each profile may be assumed to be in~${\mathcal D}(]0,+\infty[)$. This is due to the following lemma that will be  useful in the sequel:
\begin{lem}
\label{stabicompact}
Let $\psi\in{\cP}$ a profile and $\varepsilon>0$. Then, there exists $\psi_\varepsilon\in{\cD}(]0,+\infty[)$ such that
$$
\|\psi'-\psi_\varepsilon'\|_{L^2}\leq \varepsilon\,.
$$
\end{lem}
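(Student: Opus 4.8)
The plan is to approximate $\psi$ in two stages: first truncate the profile near $s=0$ and near $s=\infty$ so that it is supported in a compact interval $[\eta, R]\subset(0,+\infty)$, and then smooth the resulting function by convolution with a mollifier. The function we wish to approximate lies in the homogeneous-type space determined by the constraint $\psi'\in L^2(\R)$ together with $\psi_{|]-\infty,0]}=0$; the key point is that the membership $\psi\in L^2(\R,\mathrm{e}^{-2s}\,ds)$ is automatically preserved by the constructions below because every approximant has support in a fixed compact set, on which $\mathrm{e}^{-2s}$ is bounded above and below.

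First I would handle the truncation at infinity. Since $\psi'\in L^2(\R)$, for any $\varepsilon>0$ there is $R>0$ with $\int_R^\infty \psi'^2\,dt \leq \varepsilon^2/16$. Pick a smooth cutoff $\chi_R$ with $\chi_R=1$ on $]-\infty,R]$, $\chi_R=0$ on $[R+1,\infty[$, $0\leq\chi_R\leq 1$, $|\chi_R'|\leq 2$. The function $\chi_R\psi$ still vanishes on $]-\infty,0]$ and satisfies $(\chi_R\psi)'=\chi_R'\psi+\chi_R\psi'$; the $L^2$ norm of the second term differs from $\|\psi'\|_{L^2(]-\infty,R])}$-truncation by at most $\varepsilon/4$, and for the first term I use $|\chi_R'\psi|\leq 2|\psi(t)|$ on $[R,R+1]$, where $|\psi(t)|\leq |\psi(R)|+\bigl(\int_R^{R+1}\psi'^2\bigr)^{1/2}$ is controlled using \eqref{behavpsi} by choosing $R$ large enough that $|\psi(R)|$ is as small as needed on the unit-length interval $[R,R+1]$. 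Hence $\|\psi'-(\chi_R\psi)'\|_{L^2}\leq\varepsilon/4$ for $R$ large. A symmetric argument with a cutoff $\theta_\eta$ supported in $[\eta/2,+\infty[$, equal to $1$ on $[\eta,+\infty[$, removes the part near $0$: here $|\theta_\eta'|\leq 4/\eta$ on $[\eta/2,\eta]$ and $|\psi(t)|\leq \sqrt{t}\,\|\psi'\|_{L^2}\leq \sqrt{\eta}\,\|\psi'\|_{L^2}$ there by the first estimate in \eqref{behavpsi}, so $\|\theta_\eta'\psi\|_{L^2([\eta/2,\eta])}^2\lesssim (1/\eta^2)\cdot\eta\cdot\eta\cdot\|\psi'\|_{L^2}^2=\|\psi'\|_{L^2}^2$, which is not yet small — so instead I would use the sharper bound $|\psi(t)|\leq\sqrt{t}\,\bigl(\int_0^t\psi'^2\bigr)^{1/2}$ and the fact that $\int_0^\eta\psi'^2\to 0$ as $\eta\to0$, giving $\|\theta_\eta'\psi\|_{L^2([\eta/2,\eta])}^2\lesssim \int_0^\eta\psi'^2\to 0$. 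Combining, $\tilde\psi:=\theta_\eta\chi_R\psi$ is a continuous function supported in $[\eta/2,R+1]$, vanishing near $0$, with $\|\psi'-\tilde\psi'\|_{L^2}\leq\varepsilon/2$.

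It remains to mollify $\tilde\psi$. Let $\rho_\delta$ be a standard mollifier supported in $[-\delta,\delta]$ and set $\psi_\varepsilon:=\rho_\delta*\tilde\psi$. For $\delta<\eta/4$, $\psi_\varepsilon\in{\cD}(]0,+\infty[)$: it is smooth and supported in $[\eta/4,R+2]\subset(0,+\infty)$. Moreover $\psi_\varepsilon'=\rho_\delta*\tilde\psi'$, and since $\tilde\psi'\in L^2(\R)$ the standard approximate-identity estimate gives $\|\tilde\psi'-\rho_\delta*\tilde\psi'\|_{L^2}\to 0$ as $\delta\to0$; choose $\delta$ small enough that this is $\leq\varepsilon/2$. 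Then $\|\psi'-\psi_\varepsilon'\|_{L^2}\leq\|\psi'-\tilde\psi'\|_{L^2}+\|\tilde\psi'-\psi_\varepsilon'\|_{L^2}\leq\varepsilon$, as required.

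The only genuine subtlety — the step I expect to be the main obstacle — is the truncation near $s=0$: a naive cutoff with derivative of size $1/\eta$ does not a priori produce a small error, and one must exploit that $\int_0^\eta\psi'^2\to0$ (equivalently the sharp form of the Cauchy–Schwarz bound underlying \eqref{behavpsi}) rather than just the pointwise decay $\psi(s)/\sqrt{s}\to0$. Everything else is routine: preservation of the condition $\psi_{|]-\infty,0]}=0$ is immediate since all approximants are supported in $(0,\infty)$, and membership in $L^2(\R,\mathrm{e}^{-2s}\,ds)$ is trivial for compactly supported functions.
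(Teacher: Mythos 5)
Your truncation at infinity is where the argument breaks down. A profile $\psi\in{\cP}$ need not be bounded, let alone tend to zero at infinity: the hypotheses only give $|\psi(s)|\leq\sqrt{s}\,\|\psi'\|_{L^2}$, and \eqref{behavpsi} gives $\psi(s)=o(\sqrt{s})$, not $\psi(s)\to 0$. For instance $\psi(s)=\log(1+s)$ for $s\geq 0$ (and $0$ for $s\leq 0$) is a profile with $\psi(R)\to\infty$, so the claim that ``$|\psi(R)|$ is as small as needed'' for $R$ large is false. With your unit-width cutoff the error term $\chi_R'\psi$ satisfies only $\|\chi_R'\psi\|_{L^2([R,R+1])}\lesssim\sup_{[R,R+1]}|\psi|=o(\sqrt{R})$, which is not small; note that the smallness of $\int_R^\infty\psi'^2$ controls the increment $|\psi(t)-\psi(R)|$ on $[R,R+1]$ but not the value $\psi(R)$ itself, so this is the same difficulty you correctly identified near $s=0$, only here it cannot be absorbed on an interval of fixed length. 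The step can be repaired by cutting off at scale $R$ rather than scale $1$: taking $\chi_R=1$ on $[0,R]$, $\chi_R=0$ on $[2R,\infty[$ with $|\chi_R'|\lesssim 1/R$, one gets
$\|\chi_R'\psi\|_{L^2([R,2R])}^2\lesssim R^{-2}\cdot R\cdot\sup_{[R,2R]}|\psi|^2\lesssim\sup_{s\geq R}\frac{|\psi(s)|^2}{s}\longrightarrow 0$
by \eqref{behavpsi}; with that modification your truncation near $0$ (which correctly uses $\int_0^\eta\psi'^2\to 0$) and the final mollification go through.

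For comparison, the paper never manipulates $\psi$ itself and so never meets this issue: it approximates $\psi'$ in $L^2$ by some $\chi\in{\cD}$ and then replaces $\chi$ by $\tilde\chi=\chi-\theta^\lambda\int_{\R}\chi$, where $\theta^\lambda(s)=\lambda\theta_0(\lambda s)$ has small $L^2$ norm, so that $\tilde\chi$ has zero mean and is therefore the derivative of a smooth compactly supported function, the correction costing only $\|\chi\|_{L^1}\sqrt{\lambda}\,\|\theta_0\|_{L^2}$. Your route, once the cutoff at infinity is widened as above, is a legitimate alternative and has the mild advantage of producing an approximant manifestly supported in $]0,+\infty[$, but as written the step at infinity fails.
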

\begin{proof}
Fix $\psi\in{\cP}$ and $\varepsilon>0$. Since $\psi'\in L^2$ and ${\cD}$ is dense in $L^2$, there exists $\chi\in{\cD}$ such that $\|\psi'-\chi\|_{L^2}\leq\frac{\varepsilon}{2}$. Let $\theta_0\in{\cD}$ satisfying $\int_{\R}\,\theta_0(s)\,ds=1$ and set
$$
\tilde{\chi}=\chi-\theta^\lambda\,\int_{\R}\,\chi(s)\,ds,
$$
where $\theta^\lambda(s)=\lambda\,\theta_0(\lambda\,s)$ and $\lambda$ is a positive constant to be chosen later. Clearly $\tilde\chi\in{\cD}$ and $\int_{\R}\,\tilde\chi(s)\,ds=0$. Hence, there exists a smooth compactly supported function whose derivative is $\tilde\chi$. Besides, straightforward computation leads to
$$
\|\psi'-\tilde\chi\|_{L^2}
\leq\frac{\varepsilon}{2}+\|\chi\|_{L^1}\,\sqrt{\lambda}\,\|\theta_0\|_{L^2}\,,
$$
which  concludes the proof by choosing $
\lambda=\frac{\varepsilon^2}{4\|\chi\|_{L^1}^2\,\|\theta_0\|_{L^2}^2}\,$.
\end{proof}
\noindent$\bullet$  Finally, let us point out that
\begin{equation}
\label{Orliczdirac}
|\nabla g_n^{(j)}(x)|^2\,dx\to \|{\psi^{(j)}}'\|_{L^2}^2\, \delta_0,\quad\mbox{in}\quad {\cD}'(\R^2).
\end{equation}
Indeed, straightforward  computations give for any  smooth compactly supported function $\varphi$
\beqn
\Int\,|\nabla g_n^{(j)}(x)|^2\varphi(x)\,dx&=&\frac{1}{2\pi\alpha_n^{(j)}}\,\Int_{0}^1\,\Int_0^{2\pi}\,\Big|{\psi^{(j)}}' \left(\frac{-\log r}{\alpha_n^{(j)}}\right)\Big|^2\,\frac{\varphi(r\cos\theta,r\sin\theta)}{r}\,dr\,d\theta\\
&=&\varphi(0)  \|{\psi^{(j)}}'\|_{L^2}^2 + I_n^{(j)},
\eeqn
where
$$ I^{(j)}_n = \frac{1}{2\pi\alpha_n^{(j)}}\,\Int_{0}^1\,\Int_0^{2\pi} \,\Big|{\psi^{(j)}}' \left(\frac{-\log r}{\alpha_n^{(j)}}\right)\Big|^2\,\frac{\varphi(r\cos\theta,r\sin\theta)-\varphi(0)}{r}\,dr\,d\theta.$$
Since
$\Big|\frac{\varphi(r\cos\theta,r\sin\theta)-\varphi(0)}{r}\Big|\leq
\|\nabla\varphi\|_{L^\infty}$, we deduce that
$$| I^{(j)}_n|   \leq \|\nabla\varphi\|_{L^\infty} \Int_{0}^\infty \Big|{\psi^{(j)}}'(s)\Big|^2\,{\rm
e}^{- \alpha_n^{(j)} s }\,ds,$$
which ensures the result.
\end{rems}

%@@@@@@@@@@@@@@@@@@@@@@@@@@@@@@@@@@@@%@@@@@@@@@@@@@@@@@@@@@@@@@@@@@@@@@@@@%@@@@@@@@
\subsection{Main results}
The heart of this work is to  prove that the lack of compactness of the Sobolev embedding
$H^1(\R^2)\hookrightarrow{\mathcal L}(\R^2)$ can be reduced up to cores to the  example by Moser. Before coming to the statement of the main theorem, let us introduce some definitions as in \cite{BMM} and \cite{Ge2}.
\begin{defi}
\label{orthogen} A  scale is a sequence $\underline{\alpha}:=(\alpha_n)$
of positive real numbers going to infinity,   a core is a sequence $\underline{x}:=(x_n)$  of points in $\R^2$ and a profile is a  function $\psi$ belonging to the set
$$
{\cP}:=\Big\{\;\psi\in L^2(\R,{\rm e}^{-2s}ds);\;\;\; \psi'\in
L^2(\R),\;\psi_{|]-\infty,0]}=0\,\Big\}.
$$
Given two scales $\underline{\alpha}$, $\underline{\tilde \alpha}$,  two cores $\underline{x}$, $\underline{\tilde x}$ and two profiles $\psi$,  ${\tilde \psi}$, we  shall say that the triplets  $(\underline{\alpha},\underline{x}, \psi)$ and $(\underline{\tilde \alpha},\underline{\tilde x},{\tilde \psi})$ are orthogonal (in short $(\underline{\alpha},\underline{x}, \psi)\perp(\underline{\tilde \alpha},\underline{\tilde x},{\tilde \psi})$) if
\begin{equation}\label{caseI}  \mbox{either}\qquad \Big|\log\left({\tilde \alpha_n}/{\alpha_n}\right)\Big|\longrightarrow \infty \,,\end{equation}
or $  \tilde \alpha_n =  \alpha_n $ and
\begin{equation}\label{caseII} - \frac{ \log|x_n- \tilde x_n|}{\alpha_n} \longrightarrow a \geq 0  \,\, \mbox{with} \,\,\psi \,\, \mbox{or} \,\, {\tilde \psi}\,\, \mbox{null for} \,\, s < a\,.\end{equation}
\end{defi}

 Our  result  states as follows.
\begin{thm}
\label{noradmain} Let $(u_n)$ be a bounded sequence in
$H^1(\R^2)$ such that \bq \label{noradmain-assum1}
u_n\rightharpoonup 0, \eq \bq \label{noradmain-assum2}
\limsup_{n\to\infty}\|u_n\|_{\mathcal L}=A_0 >0 \quad \quad
\mbox{and}\eq
\bq \label{noradmain-assum4} \lim_{R\to\infty}\;
\limsup_{n\to\infty}\,\|u_n\|_{\mathcal L (|x|>R)}=0. \eq Then, there
exist a sequence of scales $(\underline{\alpha}^{(j)})$, a  sequence of cores $(\underline{x}^{(j)})$ and a sequence of profiles $(\psi^{(j)})$  such that the triplets $(\underline{\alpha}^{(j)}, \underline{x}^{(j)},\psi^{(j)})$ are pairwise
orthogonal and, up to a subsequence extraction, we have for all
$\ell\geq 1$, \bq \label{noraddecomp}
u_n(x)=\Sum_{j=1}^{\ell}\,\sqrt{\frac{\alpha_n^{(j)}}{2\pi}}\;\psi^{(j)}\left(\frac{-\log|x - x_n^{(j)}|}{\alpha_n^{(j)}}\right)+{\rm
r}_n^{(\ell)}(x),\quad\limsup_{n\to\infty}\;\|{\rm
r}_n^{(\ell)}\|_{\mathcal
L}\stackrel{\ell\to\infty}\longrightarrow 0. \eq
Moreover, we have
the following orthogonality equality \bq \label{ortogonal2} \|\nabla
u_n\|_{L^2}^2=\Sum_{j=1}^{\ell}\,\|{\psi^{(j)}}'\|_{L^2}^2+\|\nabla
{\rm r}_n^{(\ell)}\|_{L^2}^2+\circ(1),\quad n\to\infty. \eq
\end{thm}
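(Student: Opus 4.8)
The plan is to reduce the general (non-radial) case to the radial machinery of Theorem~\ref{main} by an iterative extraction of \emph{cores}, i.e.\ of points around which mass concentrates in the Orlicz space. The starting point is the observation, made rigorous by Proposition~\ref{macroscop}, that a bounded sequence weakly convergent to zero with $\limsup\|u_n\|_{\cL}=A_0>0$ must force $|\nabla u_n|^2\,dx$ to carry at least one Dirac mass $c\,\delta_{x^{(1)}}$. The first step is thus to extract such a concentration point $\underline{x}^{(1)}=(x_n^{(1)})$ together with its scale $\underline{\alpha}^{(1)}$: concretely, one localizes $u_n$ around $x_n^{(1)}$ via a cut-off, recenters, and applies a radial rearrangement/symmetrization argument so that Theorem~\ref{main} (or rather the one-profile part of its proof, together with \eqref{profile}) yields the first elementary concentration $g_n^{(1)}(x)=\sqrt{\alpha_n^{(1)}/2\pi}\,\psi^{(1)}\bigl(-\log|x-x_n^{(1)}|/\alpha_n^{(1)}\bigr)$ and the leftover $u_n-g_n^{(1)}$. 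The scale $\underline{\alpha}^{(1)}$ is read off from the profile of the symmetrization, and $\psi^{(1)}\in\cP$.

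The second step is the induction: given that $(\underline{\alpha}^{(j)},\underline{x}^{(j)},\psi^{(j)})$ have been extracted for $j\le\ell-1$ with the decomposition \eqref{noraddecomp} holding at level $\ell-1$ with remainder ${\rm r}_n^{(\ell-1)}$, one asks whether $\limsup_n\|{\rm r}_n^{(\ell-1)}\|_{\cL}>0$. If yes, Proposition~\ref{macroscop} applied to ${\rm r}_n^{(\ell-1)}$ produces a new concentration point, and one repeats the extraction to obtain $(\underline{\alpha}^{(\ell)},\underline{x}^{(\ell)},\psi^{(\ell)})$. Orthogonality of the triplets in the sense of Definition~\ref{orthogen} must be checked at each stage: if the new scale is not orthogonal to a previous one then necessarily $\alpha_n^{(\ell)}=\alpha_n^{(j)}$ (after relabelling), and in that regime the new core $x_n^{(\ell)}$ must satisfy $-\log|x_n^{(\ell)}-x_n^{(j)}|/\alpha_n^{(j)}\to a\ge 0$ with the profiles vanishing on $s<a$ — otherwise the two concentrations would overlap and, by the capacity estimate alluded to in the abstract (Lemma~\ref{minEner} on minimal-energy functions), the combined energy of the two would strictly exceed the energy actually available in $u_n$, contradicting the uniform $H^1$ bound. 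This is exactly the ``extraction of mass concentrations'' step: one shows by contradiction that a scattered (non-concentrated) configuration of the Orlicz-relevant mass costs more energy than $\|\nabla u_n\|_{L^2}^2$ allows, which both forces the Dirac structure and controls the number of profiles extracted with energy bounded below.

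The third step is to pass to the limit $\ell\to\infty$ and establish the two conclusions. The orthogonality identity \eqref{ortogonal2} follows the standard Pythagoras-type argument: the cross terms $\int\nabla g_n^{(i)}\cdot\nabla g_n^{(j)}$ for $i\ne j$ tend to zero thanks to scale or core orthogonality and \eqref{Orliczdirac}, while the cross terms $\int\nabla g_n^{(j)}\cdot\nabla{\rm r}_n^{(\ell)}$ vanish because ${\rm r}_n^{(\ell)}$ is, by construction, asymptotically orthogonal (in $\dot H^1$) to each extracted concentration. Summing \eqref{ortogonal2} over $j$ forces $\sum_j\|{\psi^{(j)}}'\|_{L^2}^2\le\liminf\|\nabla u_n\|_{L^2}^2<\infty$, hence $\|{\psi^{(\ell)}}'\|_{L^2}\to 0$; combined with \eqref{profile}, which bounds the Orlicz norm of each $g_n^{(\ell)}$ by a constant times $\|{\psi^{(\ell)}}'\|_{L^2}$, this gives $\limsup_n\|{\rm r}_n^{(\ell)}\|_{\cL}\to 0$ as $\ell\to\infty$. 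The compactness-at-infinity hypothesis \eqref{noradmain-assum4} is used throughout to guarantee that all extracted cores stay in a bounded region and that the localization cut-offs do not lose mass at infinity (as in Lemma~\ref{genradaway}).

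\medskip
\noindent\emph{Main obstacle.} The essential difficulty — and the point where the general case genuinely departs from \cite{BMM} — is Step~2: in the radial setting one has the pointwise bound \eqref{radest} far from the origin, which makes the remainder automatically $L^\infty$-small away from the concentration and hence $\cL$-compact there; without radial symmetry this tool disappears. Replacing it requires the capacity/minimal-energy argument to show that the Orlicz-norm-carrying mass of the remainder, if not concentrated at a single new point, would demand more Dirichlet energy than is available — and to do so \emph{uniformly} as the induction proceeds, so that only finitely many profiles with energy above any fixed threshold appear and the scales/cores can be arranged pairwise orthogonal. Making the symmetrization step compatible with the recentering around a moving core $x_n^{(j)}$, and controlling the interaction of several concentrations sharing a common scale but sitting at cores at ``logarithmic distance $a$'', is the technical heart of the argument.
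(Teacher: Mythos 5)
Your outline reproduces the paper's strategy only at the coarsest level, and the two places where the theorem is actually hard are not supplied. First, the core extraction: you rest it on Proposition \ref{macroscop}, but that proposition only produces a \emph{fixed} macroscopic point $x_0$ at which some gradient mass concentrates, whereas the theorem needs core sequences $(x_n^{(j)})$ located at the exponential precision of the concentration — for each $n$ one must find a ball of radius ${\rm e}^{-(1-2\varepsilon_0)\alpha_n^{(1)}}$ capturing a definite fraction of the set where $|u_n|\gtrsim \sqrt{\alpha_n^{(1)}}\,A_0$. Localizing by a cut-off around $x_0$ and then symmetrizing destroys exactly this positional information, so ``recenter and apply Theorem \ref{main} to the localized piece'' is not an argument. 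The paper's actual mechanism is different and is missing from your proposal: symmetrize $u_n$ \emph{globally} to read off the scales (Theorem \ref{main} applied to $u_n^*$), truncate so as to isolate one scale (Proposition \ref{truncate}), then cover $\R^2$ by balls of radius ${\rm e}^{-(1-2\varepsilon_0)\alpha_n^{(1)}}$ and show by contradiction, using the capacity lower bound of Proposition \ref{mainextraccore}, that if no ball captures a fixed proportion of the high-level set $E_n$ then the number of balls carrying mass forces the Dirichlet energy to exceed the uniform $H^1$ bound (Lemma \ref{noradheart}). You correctly identify this as the main obstacle, but identifying it is not proving it; likewise the extraction of the profile around the moving core (log-polar coordinates $\psi_n(y,\theta)$, angular averaging, and a second capacity argument) is nowhere indicated.

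Second, the convergence of the iteration and the orthogonality check are both incomplete as stated. You deduce $\|{\psi^{(\ell)}}'\|_{L^2}\to 0$ from the summability of the energies and then invoke \eqref{profile} — which controls the Orlicz norm of the extracted bump $g_n^{(\ell)}$, not of the remainder — to conclude $\limsup_n\|{\rm r}_n^{(\ell)}\|_{\cL}\to 0$; this is a non sequitur. What is needed is the quantitative lower bound $\|{\psi^{(\ell)}}'\|_{L^2}\geq C\,\limsup_n\|{\rm r}_n^{(\ell-1)}\|_{\cL}$ (estimate \eqref{nonradcdprofile}), which is precisely the hard capacity estimate of the construction and never appears in your argument. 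Finally, when two extracted triplets share the same scale, orthogonality is not obtained by the ``energy excess'' heuristic you sketch: two overlapping bumps whose profiles are both nonzero below $a$ would not by themselves contradict the $H^1$ bound. One must \emph{prove} that the newly extracted profile vanishes for $s<a$, which the paper does by the change of variables \eqref{change} together with the weak convergence \eqref{nonravweak}, after showing that $-\log|x_n^{(1)}-x_n^{(2)}|/\alpha_n$ stays bounded.
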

\begin{rems}\quad\\ \label{mainrems}
\noindent$\bullet$ It may seem surprising that the elements involved in decomposition \eqref{noraddecomp}   are similar to those that characterize the lack of compactness in the radial case.  The idea behind it can be illustrated  by the following  anisotropic transform of  the
example  by Moser (More general examples will be treated in Appendix \ref{examples}).
\begin{lem}
\label{anisoexamp}
  Let us consider the sequence $(u_n)$ defined by
$$
u_n(x)= f_{\alpha_n}(\lambda_1\,x_1,\lambda_2\,x_2),
$$
where $f_{\alpha_n}$ is the  example by Moser and   $\lambda_i \in ]0, \infty[$, for $i \in \{1, 2\} $. Then
\bq
\label{exp1approx1}
u_n\,\asymp\,f_{\alpha_n}\quad\mbox{in}\quad\,\cL\,,
\eq
where the symbol
$\asymp$ means that the difference goes to zero in $\cL$  as $n$ tends to infinity.
\end{lem}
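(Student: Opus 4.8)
The plan is to show that the anisotropic dilation displaces Moser's function by an amount which is uniformly small, and then to observe that a uniformly small, boundedly supported perturbation is invisible to the Orlicz norm. Write $\Lambda x:=(\lambda_1x_1,\lambda_2x_2)$, so that $u_n=f_{\alpha_n}\circ\Lambda$, and recall from Remark~\ref{remsradial} that
\[
f_{\alpha_n}(x)=\sqrt{\frac{\alpha_n}{2\pi}}\;\mathbf{L}\!\left(\frac{-\log|x|}{\alpha_n}\right),
\]
where the ramp function $\mathbf{L}$ is $1$-Lipschitz on $\R$. Writing $x=|x|\,\omega$ with $|\omega|=1$, one has $|\Lambda x|=|x|\,\ell(\omega)$ with $\ell(\omega):=\bigl(\lambda_1^2\omega_1^2+\lambda_2^2\omega_2^2\bigr)^{1/2}\in\bigl[\min(\lambda_1,\lambda_2),\,\max(\lambda_1,\lambda_2)\bigr]$, whence $\log|\Lambda x|=\log|x|+\log\ell(\omega)$, and the angular correction is bounded by the fixed constant $L:=\max\bigl(|\log\lambda_1|,|\log\lambda_2|\bigr)$, independent of $n$ and of $x$.

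Next I would exploit the Lipschitz bound. Since $u_n(x)=\sqrt{\alpha_n/2\pi}\;\mathbf{L}\bigl(-\log|x|/\alpha_n-\log\ell(\omega)/\alpha_n\bigr)$, the $1$-Lipschitz property of $\mathbf{L}$ yields, uniformly in $x$,
\[
\bigl|u_n(x)-f_{\alpha_n}(x)\bigr|\;\le\;\sqrt{\frac{\alpha_n}{2\pi}}\cdot\frac{|\log\ell(\omega)|}{\alpha_n}\;\le\;\frac{L}{\sqrt{2\pi\,\alpha_n}},
\]
which tends to $0$ as $n\to\infty$. Moreover, since $f_{\alpha_n}$ vanishes off the closed unit ball and $|\Lambda x|\ge\min(\lambda_1,\lambda_2)\,|x|$, both $f_{\alpha_n}$ and $u_n$ — hence the difference $v_n:=u_n-f_{\alpha_n}$ — are supported in the fixed ball $\overline{B(0,R_0)}$ with $R_0:=\max\bigl(1,1/\min(\lambda_1,\lambda_2)\bigr)$.

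Finally I would transfer this to the Orlicz norm, using that $\cL$ behaves like $L^2$ on uniformly bounded functions. For any fixed $\mu>0$, the pointwise bound together with the fixed support gives
\[
\int_{\R^2}\left({\rm e}^{\,|v_n|^2/\mu^2}-1\right)dx\;\le\;\pi R_0^2\left({\rm e}^{\,L^2/(2\pi\alpha_n\mu^2)}-1\right),
\]
and the right-hand side tends to $0$ as $n\to\infty$ because $\alpha_n\to\infty$; in particular it is $\le\kappa$ for $n$ large, so $\|v_n\|_{\cL}\le\mu$ for $n$ large. Letting $\mu\to0$ gives $\|u_n-f_{\alpha_n}\|_{\cL}\to0$, which is precisely \eqref{exp1approx1}.

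The computation is short precisely because Moser's profile $\mathbf{L}$ is Lipschitz: the step to watch is the passage to logarithmic coordinates combined with the Lipschitz estimate, which simultaneously delivers the uniform smallness and the fixed support. I expect the real difficulty to surface only for the more general anisotropic examples announced for Appendix~\ref{examples}, where an arbitrary profile $\psi\in\cP$ is merely $1/2$-H\"older continuous; this route then furnishes only an $O(1)$ pointwise bound on the difference, and one has to argue instead that the (necessarily small) set where the difference fails to be negligible carries a negligible share of the Orlicz norm.
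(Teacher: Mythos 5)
Your proof is correct, but it follows a genuinely different route from the paper's. The paper proves the isotropic case $\lambda_1=\lambda_2=\lambda$ by showing that the difference tends to zero \emph{strongly in $H^1$} (an explicit computation of the gradient cross term, using continuity of translations in $L^2$), and then handles $\lambda_1\neq\lambda_2$ by sandwiching the difference between two such radial differences via the monotonicity of $\mathbf{L}$, concluding with the invariance of $\|\cdot\|_{\cL}$ under modulus and the monotonicity property of the Orlicz norm (Lemma \ref{monoto}); the $\cL$-smallness is then inherited from the Sobolev embedding. You instead exploit the fact that $\mathbf{L}$ is $1$-Lipschitz to get the uniform pointwise bound $|u_n-f_{\alpha_n}|\le L/\sqrt{2\pi\alpha_n}$ on a fixed compact support, and then kill the Orlicz norm by a direct integral estimate, avoiding any $H^1$ computation, any sandwich, and any appeal to Lemma \ref{monoto}; this is shorter and more elementary for this particular example (it even gives $L^\infty$-smallness, which is stronger than $\cL$-smallness, though it does not recover the $H^1$-closeness the paper obtains in the isotropic case). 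Your closing remark is also accurate and identifies exactly why the paper argues as it does: for a general profile $\psi\in\cP$, which is only $C^{1/2}$, the same Lipschitz trick yields merely an $O(1)$ pointwise bound, so it cannot prove Proposition \ref{genrad} in Appendix \ref{examples}, whereas the paper's monotone-sandwich argument (after splitting $\psi$ into two increasing profiles $\psi_1-\psi_2$) does generalize.
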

\begin{proof}[Proof of Lemma \ref{anisoexamp}]
To go to the proof of Lemma \ref{anisoexamp}, let us first consider the case where  $\lambda_1=\lambda_2 = \lam >0$, and set
$$
v_n(x)=\sqrt{\frac{\alpha_n}{2\pi}}\;{\mathbf L}\Big(\frac{-\log\lam|x|}{\alpha_n}\Big)-\sqrt{\frac{\alpha_n}{2\pi}}\;{\mathbf L}\Big(\frac{-\log|x|}{\alpha_n}\Big)\,,
$$
where ${\mathbf L}$ is the  profile by Moser introduced in Remarks \ref{remsradial}.
A straightforward computation leads to
\beqn
\|\nabla v_n\|_{L^2}^2&=&2\|{\mathbf L}'\|_{L^2}^2-\frac{2}{\alpha_n}\int_0^\infty\,{\mathbf L}'\left(-\frac{\log\lam r}{\alpha_n}\right){\mathbf L}'\left(-\frac{\log r}{\alpha_n}\right)\,\frac{dr}{r}\\
&:=&2\|{\mathbf L}'\|_{L^2}^2-2 K_n\,.
\eeqn
The change of variable $s=-\frac{\log r}{\alpha_n}$ yields
$$
K_n=\int_{\R}\,{\mathbf L}'(s) {\mathbf L}'(s-\frac{\log \lam}{\alpha_n})\,ds\,.
$$
But,
$$ \Big| K_n - \int_{\R} {\mathbf L}'(s)^2 \,ds \Big|  \leq \|{\mathbf L}'\|_{L^2} \| \tau_{\frac{\log \lam}{\alpha_n}} {\mathbf L}' - {\mathbf L}'\|_{L^2} \,. $$
Since ${\mathbf L}'\in L^2(\R)$,  we have for $\lam \neq 1$ (the case $\lam = 1$ being trivial)
$$\| \tau_{\frac{\log \lam}{\alpha_n}} {\mathbf L}' - {\mathbf L}'\|_{L^2} \to 0  \quad \mbox{as} \quad n  \to \infty,$$ which implies  that the sequence $(K_n)$ tends to $\|{\mathbf L}'\|_{L^2}^2$ as $n$ goes to infinity. It follows that $v_n\to 0$ strongly in $H^1(\R^2)$ which leads to the desired conclusion in that case. \\

Let us now consider the general case where $\lambda_2 < \lambda_1$ and set  $$
w_n(x)=u_n(x)-\sqrt{\frac{\alpha_n}{2\pi}}\;{\mathbf L}\Big(\frac{-\log|x|}{\alpha_n}\Big).
$$
Using the fact that the profile ${\mathbf L}$ is increasing, we get
$$
w_n^1(x)\leq w_n(x)\leq w_n^2(x),
$$
where
\beqn
w_n^1(x)&=&\sqrt{\frac{\alpha_n}{2\pi}}\;{\mathbf L}\Big(\frac{-\log \lam_1|x|}{\alpha_n}\Big)-\sqrt{\frac{\alpha_n}{2\pi}}\;{\mathbf L}\Big(\frac{-\log|x|}{\alpha_n}\Big),\\
w_n^2(x)&=&\sqrt{\frac{\alpha_n}{2\pi}}\; {\mathbf L} \Big(\frac{-\log \lam_2|x|}{\alpha_n}\Big)-\sqrt{\frac{\alpha_n}{2\pi}}\;{\mathbf L}\Big(\frac{-\log|x|}{\alpha_n}\Big)\,.
\eeqn
We deduce that  for all $x \in \R^2$
$$ \big| w_n(x) \big| \leq \big| w_n^1(x) \big| + \big| w_n^2(x) \big|
\leq \Big|\big| w_n^1(x) \big| + \big| w_n^2(x) \big| \Big|. $$
Taking advantage of invariance under modulus of the Orlicz norm \eqref{moeql} and the
 monotonicity  property  (see Lemma \ref{monoto}),  we conclude  the proof of the lemma since the sequences $(w_n^1)$ and $(w_n^2)$ converge strongly to $0$ in $H^1(\R^2)$. \\ \end{proof}
\noindent$\bullet$ Let us point out that the elementary concentrations $$g_n^{(j)}(x) := \sqrt{\frac{\alpha_n^{(j)}}{2\pi}}\;\psi^{(j)}\left(\frac{-\log|x - x_n^{(j)}|}{\alpha_n^{(j)}}\right)$$ are  completely different from the profiles involved in the characterization of the lack of compactness of the Sobolev embedding investigated in \cite{BCG}, \cite{Ge2} and \cite{jaffard}. In fact, we can prove that  for any $0< a < b$  and any sequence $(h_n)$ of nonnegative real numbers
\begin{equation}
\label{withscales}\int_{a < h_n |\xi|< b}  | \wh {\nabla g_n^{(j)}}(\xi)|^2 \,d\xi \to 0,\quad n\to\infty. \end{equation}
Actually, the scales $\alpha_n^{(j)}$ do not correspond to scales in the point of view of frequencies  as for the profiles  describing the lack of compactness of $\dot H^s(\R^d)\hookrightarrow L^p(\R^d)$ in \cite{Ge2} or  of $\dot H^1(\R^2)\hookrightarrow BMO(\R^2)$ in \cite{BCG}. It was emphasized in  \cite{Ge2} that the characteristic of being without  scales in the sense of \eqref{withscales} is measured using   the Besov norm $\dot B_{2,\infty}^0$ (a precise definition of  Besov spaces is available for instance in \cite{BCD}). We deduce that
$$ \|g_n^{(j)}\|_{\dot B_{2,\infty}^1}\to 0, \quad  n \to \infty.$$
\noindent$\bullet$ Now in our context, the scales correspond   to values taken by the functions $g_n^{(j)}$ in consistent sets  of  size. More precisely, saying that $\alpha_n$  is a scale for $u_n$ means that $u_n \geq c \sqrt{\alpha_n} $ on a set $E_n$ of Lebesgue measure greater than ${\rm e}^{-2\alpha_n}$.\\
\noindent$\bullet$ It will be useful later on to observe that if $ \|w_n\|_{\mathcal
L}\stackrel{n\to\infty}\longrightarrow 0 $, then for any scale $\alpha_n$, any positive constant $C$ and any ball $B(x_n,{\rm e}^{-\alpha_n})$ the sets
$$ F_n :=\Big\{x\in \R^2;\,|w_n(x)|\,\geq C \sqrt{\alpha_n} \Big\}$$ satisfy
\begin{equation}
\label{usfcond}\frac{|F_n \cap B (x_n, {\rm e}^{- \alpha_n })|}{|B (x_n, {\rm e}^{- \alpha_n })|} \longrightarrow 0,\quad \mbox{as}  \quad n\to\infty, \end{equation}
where ~$|\,\cdot \, |$   denotes the Lebesgue measure. Indeed  if $ \|w_n\|_{\mathcal
L}\stackrel{n\to\infty}\longrightarrow 0 $, then for any $\varepsilon>0$ there exists $N \in \N$ so that for $n \geq N$
$$ \Int_{\R^2}\,\left({\rm
e}^{\frac {|w_n (x)|^2}{\varepsilon^2}}-1\right)\,dx \leq \kappa,$$
which implies that
$$ |F_n \cap B (x_n, {\rm e}^{- \alpha_n })| \lesssim {\rm
e}^{- \frac {C^2}{\varepsilon^2} \alpha_n}$$ and ensures the result if $\varepsilon$ is chosen small enough.\\

More generally,  if for $n$ large $ \|w_n\|_{\mathcal
L}\leq \eta $, then for any scale $\alpha_n$, any positive constant $C > \sqrt{2}\eta$ and any ball $B(x_n,{\rm e}^{-\alpha_n})$
$$\frac{|F_n \cap B (x_n, {\rm e}^{- \alpha_n })|}{|B (x_n, {\rm e}^{- \alpha_n })|} \longrightarrow 0,\quad \mbox{as}  \quad n\to\infty.$$
\noindent$\bullet$ As well, if $ \|u_n\|_{\mathcal
L}\stackrel{n\to\infty}\longrightarrow  A_0 $, then for any scale $\alpha_n$ and any measurable set $S_n$ of Lebesgue measure $|S_n| \approx {\rm e}^{- c \,\alpha_n } $,   the sets
$$ G_n :=\Big\{x\in \R^2;\,|u_n(x)|\,\geq M \sqrt{\frac{\alpha_n}{2\pi}} \Big\},$$
with $M > \sqrt{4\pi c} A_0$
check
\begin{equation}\label{high} \frac{|G_n \cap S_n|}{|S_n|} \longrightarrow 0,\quad \mbox{as}  \quad n\to\infty.\end{equation}
\noindent$\bullet$ Orthogonality hypothesis means that the interaction between the elementary concentrations is negligible in the energy space. Roughly speaking,  condition \eqref{caseII} requires in the case where the cores are not  sufficiently distant, namely   in the case where $- \frac{ \log|x_n- \tilde x_n|}{\alpha_n} \stackrel{n\to\infty}\longrightarrow a > 0$,  the vanishing  of one of the profiles
 for  $s<a$. This condition is due to the fact that  the parts of the elementary concentrations respectively around the cores $x_n$ and $\tilde x_n$ resulting   from the profiles for the values $s<a$ interact.  Nevertheless,  the parts  coming from the profiles for the values $s>a$ do not see each other  and  that is why  the  vanishing requirement  in \eqref{caseII} applies only to  the zone $s<a$. \\
\noindent$\bullet$  The orthogonality assumption \eqref{caseII} only concerns the case where the limit  $a$ is finite. In effect, if the cores are close enough  in the sense that  $- \frac{ \log|x_n- \tilde x_n|}{\alpha_n} \stackrel{n\to\infty}\longrightarrow \infty$,  one   can add  them,
  up to a remainder term  tending to  zero in the energy space.  More precisely, if   we  consider the elementary concentration
$$ \sqrt{\frac{\alpha_n}{2\pi}}\;\varphi\left(\frac{-\log|x- x_n|}{\alpha_n}\right),$$
 under the assumption $- \frac{ \log|x_n|}{\alpha_n} \stackrel{n\to\infty}\longrightarrow \infty$, then by straightforward computations we get
 \begin{equation}
\label{ex1} \sqrt{\frac{\alpha_n}{2\pi}}\;\varphi\left(\frac{-\log|x- x_n|}{\alpha_n}\right)  \asymp \sqrt{\frac{\alpha_n}{2\pi}}\;\varphi\left(\frac{-\log|x|}{\alpha_n}\right)\quad \mbox{in} \quad  H^1. \end{equation}
\noindent$\bullet$ The hypothesis \eqref{noradmain-assum4} of compactness at infinity in the Orlicz space ensures the compactness at infinity in $L^2$ thanks to  \eqref{leborlicz}. It is necessary to eliminate the lack of compactness of $ H^1(\R^2)\hookrightarrow \cL(\R^2)$ due  to translations to infinity mentioned above and illustrated by sequences of type $u_n(x)=\varphi(x+x_n)$ where
$0\neq\varphi\in{\mathcal D}$ and $|x_n|\to\infty$. Such hypothesis allows us, up to a remainder small enough in $\cL $, to reduce to the case where the mass concentrations are situated in a fixed ball. In particular, for any fixed $j$, we can easily prove that the core $(\underline{x}^{(j)})$ is a bounded sequence.\\
\noindent$\bullet$ Concerning the behavior of  the Orlicz norm, we have the following result:
\begin{prop}
\label{sumOrlicz2} Let $(\underline{\alpha}^{(j)}, \underline{x}^{(j)},\psi^{(j)})_{1\leq
j\leq\ell}$  be a family of triplets of scales, cores  and profiles such that the scales are pairwise
orthogonal\footnote{As we will see later in Appendix \ref{proofprop{sumOrlicz2}}, property \eqref{OrliczMax1} fails for the same scale and the pairwise orthogonality of the couples $(\underline{x}^{(j)},\psi^{(j)})$. }, and set
$$
g_n(x)=\Sum_{j=1}^{\ell}\,\sqrt{\frac{\alpha_n^{(j)}}{2\pi}}\;\psi^{(j)}\left(\frac{-\log|x- x_n^{(j)}|}{\alpha_n^{(j)}}\right):=\Sum_{j=1}^{\ell}\,g_n^{(j)}(x)\;.
$$ Then
\begin{equation}
\label{OrliczMax1} \|g_n\|_{\cL}\to \sup_{1\leq
j\leq\ell}\,\left(\lim_{n\to\infty}\,\|g_n^{(j)}\|_{\cL}\right)\; = \sup_{1\leq
j\leq\ell}\,\left( \frac{1}{\sqrt{4\pi}}\,\max_{s>0}\;\frac{|\psi^{(j)}(s)|}{\sqrt{s}}\right).
\end{equation}
\end{prop}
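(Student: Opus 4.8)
The plan is to establish the two inequalities
\[
\liminf_{n\to\infty}\|g_n\|_{\cL}\ \geq\ \sup_{1\leq j\leq\ell}\Big(\lim_{n\to\infty}\|g_n^{(j)}\|_{\cL}\Big)
\qquad\text{and}\qquad
\limsup_{n\to\infty}\|g_n\|_{\cL}\ \leq\ \sup_{1\leq j\leq\ell}\Big(\lim_{n\to\infty}\|g_n^{(j)}\|_{\cL}\Big),
\]
the identification of $\lim_{n\to\infty}\|g_n^{(j)}\|_{\cL}$ with $\frac{1}{\sqrt{4\pi}}\max_{s>0}|\psi^{(j)}(s)|/\sqrt{s}$ being exactly \eqref{profile} from \cite{BMM}. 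For the lower bound, fix an index $j_0$ achieving the supremum. Since the cores are bounded sequences (a point already noted in Remarks \ref{mainrems}) and the profiles $\psi^{(j)}$ may, by Lemma \ref{stabicompact}, be taken in $\cD(]0,+\infty[)$ up to an arbitrarily small error in energy, hence in $\cL$ by \eqref{2D-embed}, one localizes near the core $x_n^{(j_0)}$. On the ball where $g_n^{(j_0)}$ lives, the pairwise orthogonality of the scales forces every other $g_n^{(j)}$, $j\neq j_0$, to be either comparatively negligible or of lower order in the relevant sense, so that $g_n\geq (1-\circ(1))\,g_n^{(j_0)}$ on that region; combined with the monotonicity of the Orlicz norm (Lemma \ref{monoto}) and its invariance under modulus \eqref{moeql} this yields $\|g_n\|_{\cL}\geq \|g_n^{(j_0)}\|_{\cL}-\circ(1)$.

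For the upper bound, I would argue directly from the definition \eqref{norm} of the Orlicz norm. Write $\lambda_j:=\lim_{n}\|g_n^{(j)}\|_{\cL}$ and $\Lambda:=\sup_j\lambda_j$; fix $\varepsilon>0$ and set $\lambda=\Lambda+\varepsilon$. The key estimate is to bound $\int_{\R^2}\big(e^{|g_n(x)/\lambda|^2}-1\big)\,dx$ by a quantity that stays $\leq\kappa$ for $n$ large. One splits $\R^2$ according to which core $x_n^{(j)}$ is closest (the annular regions $\{e^{-\alpha_n^{(j)}}\lesssim |x-x_n^{(j)}|\}$ where each $g_n^{(j)}$ is essentially supported are, by scale orthogonality $|\log(\alpha_n^{(j)}/\alpha_n^{(k)})|\to\infty$, asymptotically disjoint up to sets of negligible measure, and on those overlap sets the exponential integrand is controlled because the $g_n^{(j)}$ are there uniformly small or the measure is exponentially small — this is precisely the mechanism behind \eqref{usfcond} and \eqref{high} in Remarks \ref{mainrems}). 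On the region dominated by $g_n^{(j)}$ one has $|g_n(x)|\leq |g_n^{(j)}(x)|+\circ(1)\cdot\sqrt{\alpha_n^{(j)}}$ pointwise, whence $e^{|g_n/\lambda|^2}-1\leq e^{|g_n^{(j)}/(\lambda-\circ(1))|^2}-1$ up to a harmless factor, and since $\lambda>\lambda_j$ the integral of that over the region is $\leq 1$ for $n$ large by definition of $\lambda_j$. Summing the at most $\ell$ contributions and absorbing the cross terms gives a bound independent of $n$, so $\|g_n\|_{\cL}\leq\Lambda+\varepsilon$ eventually; letting $\varepsilon\to0$ finishes the argument.

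The main obstacle is the control of the cross terms / overlap regions in the upper bound: one must show that where two elementary concentrations $g_n^{(j)}$ and $g_n^{(k)}$ with orthogonal scales are simultaneously non-negligible, the contribution to $\int(e^{|g_n/\lambda|^2}-1)$ vanishes as $n\to\infty$. This is where scale orthogonality is used decisively — it guarantees that the "effective support" (the zone $\{|x-x_n^{(j)}|\geq e^{-c\alpha_n^{(j)}}\}$ on which $g_n^{(j)}$ is not yet at its plateau value) of the faster-concentrating profile sits inside a ball on which the slower one is still at an $O(\sqrt{\cdot})$ level with the measure of that ball being exponentially smaller than the scale of the slower one, so the product estimate and the measure estimate combine favorably. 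I would isolate this as a short lemma ("interaction estimate for orthogonal scales") and then the two-sided inequality reduces to bookkeeping. The remark's footnote signals that this is genuinely where orthogonality of scales — as opposed to merely orthogonality of the pairs $(\underline{x}^{(j)},\psi^{(j)})$ — is indispensable, which is consistent with the strategy above.
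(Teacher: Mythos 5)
Your overall strategy (two-sided estimates on $\int(\mathrm{e}^{|g_n/\lambda|^2}-1)\,dx$, lower bound by localizing near the core realizing the supremum, upper bound by splitting $\R^2$ and exploiting scale orthogonality) is the same in spirit as the paper's, which reduces to the model case $h_n=a\,f_{\alpha_n}(\cdot-x_n)+b\,f_{\beta_n}$ and proves the two inequalities separately. But as written your upper bound has a genuine gap at the point where the sharp constant must come out. If on the region dominated by $g_n^{(j)}$ you only invoke ``the definition of $\lambda_j$'', you get that the regional integral is at most $\kappa$ (the constant in \eqref{norm}); summing the $\ell$ regions then yields a bound $\ell\kappa$, which only controls an \emph{equivalent} Orlicz norm and does not give $\limsup_n\|g_n\|_{\cL}\leq\Lambda+\varepsilon$, i.e.\ it does not prove \eqref{OrliczMax1}. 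What is actually needed (and true, because of the explicit structure of the elementary concentrations) is the stronger fact that for fixed $\lambda>\lambda_j$ the integral $\int(\mathrm{e}^{|g_n^{(j)}/\lambda|^2}-1)\,dx$ tends to $0$; this is exactly why the paper proves that the whole quantity $\int(\mathrm{e}^{\frac{4\pi-\eta}{M^2}|h_n|^2}-1)\,dx$ converges to zero, rather than merely staying below $\kappa$.

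The second missing piece is the interaction estimate itself, which you correctly identify as the main obstacle but leave unproven. The dangerous zone is not where one concentration is ``uniformly small'': it is the intermediate annulus $\{\mathrm{e}^{-\beta_n}\leq|x|\leq\mathrm{e}^{-\alpha_n}\}$ where the slow concentration sits at its plateau value $\sqrt{\alpha_n/2\pi}$ (contributing a factor $\mathrm{e}^{c\,\alpha_n}$) while the fast one is in its logarithmic regime; showing that ``the product estimate and the measure estimate combine favorably'' there is precisely the content of the paper's Lemma \ref{variantJFA} ($\mathrm{e}^{p\alpha_n}\int_{\mathrm{e}^{-\beta_n}}^{\mathrm{e}^{-\alpha_n}}\mathrm{e}^{q\log^2 r/\beta_n}\,r\,dr\to0$ for $0<p,q<2$), used to establish \eqref{diff}, together with the explicit computation \eqref{lastterm} for the cross term and a H\"older argument based on \eqref{Mos1}. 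Relatedly, your pointwise step $\mathrm{e}^{|g_n/\lambda|^2}-1\leq \mathrm{e}^{|g_n^{(j)}/(\lambda-\circ(1))|^2}-1$ ``up to a harmless factor'' hides a factor of the type $\mathrm{e}^{c_n\alpha_n^{(j)}}$ with $c_n\to0$, which is not harmless pointwise and must be absorbed against exponentially small measures — again the same quantitative lemma. Finally, in the lower bound, the assertion that all other bumps are negligible on the region where $g_n^{(j_0)}$ lives requires excising the exponentially small balls where faster-scale concentrations peak (and a separate treatment when a core sits at distance $\approx\mathrm{e}^{-\alpha_n^{(j_0)}}$ from the relevant region, the paper's delicate case $l=1$); this is glossed over in your sketch, and the boundedness of the cores, which you invoke, is not an assumption of the proposition and is not needed.
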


\noindent$\bullet$ Let us mention that M. Struwe in \cite{Struwe88} studied the loss of compactness for the functional
$$
E(u)=\frac{1}{|\Omega|}\,\Int_{\Omega}\;{\rm e}^{4\pi |u|^2}\,dx,
$$
where $\Omega$ is a bounded domain in $\R^2$.  Also, weak continuity properties of this functional was recently investigated by Adimurthi and K. Tintarev  in \cite{AT2}.

\end{rems}

%@@@@@@@@@@@@@@@@@@@@@@@@@@@@@@@@@@@@%@@@@@@@@@@@@@@@@@@@@@@@@@@@@@@@@@@@@%@@@@@@@@

\subsection{Layout of the paper}

The  paper is organized as follows:  in Section \ref{mainnoradproofth}, we describe
 the algorithmic construction of the decomposition of a bounded sequence  $(u_n)_{n \in \N } $ in $ H^1(\R^2)$, up to a
subsequence extraction, in terms of asymptotically orthogonal
profiles concentrated around cores  of type~$\sqrt{\frac{\alpha_n}{2\pi}}\;\psi(\frac{-\log|x - x_n|}{\alpha_n})$ and prove Theorem \ref{noradmain}. In the appendix  \ref{appendix}, we deal  with
several complements for the sake of completeness: Appendix \ref{examples} is dedicated to the study of  significant examples, Appendix \ref{proofprop{sumOrlicz2}} to the proof of Proposition \ref{sumOrlicz2} specifying the behavior of the decomposition by means of profiles with respect to the Orlicz norm and  finally Appendix \ref{reacap} to the collection of all useful  known results  on rearrangement of functions and the notion of capacity  which are used in this text.\\

Finally, we mention that, $C$ will be used to denote a constant
which may vary from line to line. We also use $A\lesssim B$ (respectively $A\gtrsim B$) to
denote an estimate of the form $A\leq C B$ (respectively $A \geq C B$) for some absolute
constant $C$ and $A\approx B$ if $A\lesssim B$ and $A \gtrsim B$.
For simplicity, we shall also still denote by $(u_n)$ any
subsequence of $(u_n)$ and designate by $\circ(1)$ any sequence which tends to $ 0 $ as $n$ goes to infinity. For two scales  $(\alpha_n)$ and $(\beta_n)$, we shall notice  $(\alpha_n) \ll (\beta_n)$  if $\frac{\beta_n}{\alpha_n}\to\infty$.

%@@@@@@@@@@@@@@@@@@@@@@@@@@@@@@@@@@@@%@@@@@@@@@@@@@@@@@@@@@@@@@@@@@@@@@@@@%@@@@@@@@
\section{Proof of the main theorem}
\label{mainnoradproofth}

%@@@@@@@@@@@@@@@@@@@@@@@@@@@@@@@@@@@@%@@@@@@@@@@@@@@@@@@@@@@@@@@@@@@@@@@@@%@@@@@@@@
%@@@@@@@@@@@@@@@@@@@@@@@@@@@@@@@@@@@@%@@@@@@@@@@@@@@@@@@@@@@@@@@@@@@@@@@@@%@@@@@@@@

This section is devoted to the proof of Theorem \ref{noradmain}. As it is mentioned above, the analysis of the lack of compactness of Sobolev embedding of $H^1(\R^2)$ in the Orlicz space is different from the one conducted in the radial setting \cite{BMM}  where an $L^ \infty$  estimate far away from the origin is available. Such $L^ \infty$ estimate is obviously not valid in the general case of $H^1(\R^2)$ even far away  from a discrete  set. To be convenience, it suffices to consider the following bounded sequence in $H^1(\R^2)$
$$  u_n(x) = \sum_{k=1}^\infty\, \frac {1}{k^2} f_{\alpha_n} (x-x_k), $$
where $(\alpha_n)$ is a scale,  $f_{\alpha_n}$ is the  example by Moser and $(x_k)$ is a  sequence in $\R^2$ that has accumulation points.

\subsection{The radial case}\label{method}
Before going into the details of the proof of Theorem \ref{noradmain}, let us briefly recall the basic idea of the proof of Theorem  \ref{main} (for more details, one can consult \cite{BMM}): through  a diagonal subsequence extraction, the main step consists to extract a  scale  $(\alpha_n)$ and a profile $\psi$ such that
$$
 \|\psi'\|_{L^2}\geq C\,A_0,
$$
where $C$ is a universal constant. The extraction of the scale follows  from the fact that for any $ \varepsilon > 0 $
\begin{equation}
\label{heartest} \sup_{s\geq
0}\left(\Big|\frac{v_n(s)}{A_0-\varepsilon}\Big|^2-s\right)\to\infty,\quad
n\to\infty,
\end{equation}
with $v_n(s) =  u_n ({\rm e}^{- s})$.  Property \eqref{heartest} is proved by contradiction assuming that
$$ \sup_{s\geq 0,\,
n\in\N}\;\;\left(\Big|\frac{v_n(s)}{A_0-\varepsilon}\Big|^2-s\right)\leq
C<\infty,
$$
which ensures by virtue of Lebesgue theorem that
$$
\int_{|x|<1}\;\left({\rm
e}^{|\frac{u_n(x)}{A_0-\varepsilon}|^2}-1\right)\,dx=2\pi\,\int_{0}^\infty\;\left({\rm
e}^{|\frac{v_n(s)}{A_0-\varepsilon}|^2}-1\right)\,{\rm e}^{-2s}\,ds\to
0,\quad n\to\infty.
$$
Furthermore,  taking advantage of the radial estimate  \eqref{radest}, we deduce that  the sequence $(u_n)$ is   bounded on the set $\{|x|\geq 1\}$ which
implies that
$$
\int_{|x|\geq 1}\;\left({\rm
e}^{|\frac{u_n(x)}{A_0-\varepsilon}|^2}-1\right)\,dx\leq C\|u_n\|_{L^2}^2\to
0\,.
$$
In conclusion, this leads  to
$$
\Limsup_{n\to\infty}\,\|u_n\|_{\mathcal L}\leq A_0-\varepsilon,
$$
which is in contradiction with hypothesis \eqref{main-assum2}. Fixing $\varepsilon = A_0 / 2$,  a scale $(\alpha_n)$  can be extracted such that
$$
\frac{ A_0}{2}\sqrt{\alpha_n}\leq| v_n(\alpha_n)|\leq C
\sqrt{\alpha_n}+\circ(1).
$$
Finally, setting
$$
\psi_n(y)=\sqrt{\frac{2\pi}{\alpha_n}}\;v_n(\alpha_n y),
$$
one can prove that $\psi_n$ converges  simply to a profile $\psi$. Since $ |\psi_n(1)| \geq C  A_0$, we obtain
$$
C  A_0 \leq |\psi(1)|=\Big|\int_0^1\psi'(\tau)\,d\tau\Big|\leq
\|\psi'\|_{L^2(\R)},
$$
which ends the proof of the main point. \\

Our approach to characterize the lack of compactness in the general case of the Sobolev embedding
$$ H^1(\R^2)\hookrightarrow \cL(\R^2)$$
is
entirely different  from the one conducted in the radial case and uses in a crucial way capacity arguments. \\

As it was highlighted in Remarks \ref{mainrems}, the lack of compactness of the  Sobolev embedding in the Orlicz space  is due to large values of the sequences on sets of significant  Lebesgue measure. The main difficulty consists in  extracting  the cores, which are the points about which enough mass is concentrated.   To do so,  we will use some  capacity arguments and demonstrate by contradiction that
 the mass responsible for the lack of compactness can not be  dispersed.\\

The proof  of Theorem \ref{noradmain} is done in four  steps. In the first
step, using  Schawrz symmetrization, we first  study   $u_n^*$,
 the symmetric decreasing rearrangement of $u_n$
 (for an introduction to the subject see Appendix \ref{reacap}). Since $u_n^* \in H^1_{rad}(\R^2)$ and satisfies   assumptions of Theorem \ref{main},  it can be written as an orthogonal asymptotic decomposition by means  of elementary concentrations similar to examples of Moser, namely  of   type $ g_n^{(j)}(x)= \sqrt{\frac{\alpha_n^{(j)}}{2\pi}}\;\varphi^{(j)}\left(\frac{-\log|x|}{\alpha_n^{(1)}}\right)$. This actually yields the scales of
$u_n$ too.  Then,   in the second step, taking advantage of  \eqref{profile1}, we
reduce ourselves to one scale
% $g_n^{(1)}$
and extract  the first core $(x_n^{(1)})$
and the first profile $\psi^{(1)}$ which leads to the extraction of the first  element $ \sqrt{\frac{\alpha_n^{(1)}}{2\pi}}\;\psi^{(1)}\left(\frac{-\log|x - x_n^{(1)}|}{\alpha_n^{(j)}}\right)$. This step constitutes  the heart of the proof  and relies on capacity arguments: the basic idea is to show that the mass does not scatter and is  mainly concentrated around some points  that will constitute the cores.
 In the third step, we study  the remainder term. If the limit of its Orlicz norm is null we stop the process. If not, we prove that this remainder term satisfies the same properties as the
sequence we  start with  which allows us to  extract a second  elementary concentration concentrated around a second core $(x_n^{(2)})$. Thereafter, we establish the property of orthogonality
between the  first  two elementary concentrations and finally we prove that this process converges.

%@@@@@@@@@@@@@@@@@@@@@@@@@@@@@@@@@@@@%@@@@@@@@@@@@@@@@@@@@@@@@@@@@@@@@@@@@%@@@@@@@@

%@@@@@@@@@@@@@@@@@@@@@@@@@@@@@@@@@@@@%@@@@@@@@@@@@@@@@@@@@@@@@@@@@@@@@@@@@%@@@@@@@@
%@@@@@@@@@@@@@@@@@@@@@@@@@@@@@@@@@@@@%@@@@@@@@@@@@@@@@@@@@@@@@@@@@@@@@@@@@%@@@@@@@@
\subsection{Extraction of the scales}
\label{Extractscales}

Using  Schawrz symmetrization, we first  study   $u_n^*$,  the symmetric decreasing rearrangement of $u_n$,  (see Appendix \ref{reacap} for all details). Since $u_n^* \in H^1_{rad}(\R^2)$ and satisfies by virtue of Proposition \ref{norminvar} the assumptions of Theorem \ref{main},  there
exists a sequence $(\underline{\alpha}^{(j)})$ of pairwise
orthogonal scales and a sequence of profiles $(\varphi^{(j)})$ in
${\cP}$ such that, up to a subsequence extraction, we have for all
$\ell\geq 1$, $$
u_n^*(x)=\Sum_{j=1}^{\ell}\,\sqrt{\frac{\alpha_n^{(j)}}{2\pi}}\;\varphi^{(j)}\left(\frac{-\log|x|}{\alpha_n^{(j)}}\right)+{\rm
r}_n^{(\ell)}(x),\quad\limsup_{n\to\infty}\;\|{\rm
r}_n^{(\ell)}\|_{\mathcal
L}\stackrel{\ell\to\infty}\longrightarrow 0.$$
Since $u_n^*$ is nonnegative, it is clear under the proof sketch presented in Section \ref{method} that the profiles $\varphi^{(j)}$ are nonnegative.
Moreover, thanks to \eqref{profile1} we can suppose that
$$ A_0 = \lim_{n\to\infty}\left\|  \sqrt{\frac{\alpha_n^{(1)}}{2\pi}}\;\varphi^{(1)}\left(\frac{-\log|x|}{\alpha_n^{(1)}}\right)  \right\|_{\mathcal L}$$
with $\displaystyle \max_{s>0}\;\frac{\varphi^{(1)}(s)}{\sqrt{s}} =  \varphi^{(1)}(1)$. It follows clearly that $ \varphi^{(1)}(1) = \sqrt{4\pi}\, A_0 $ and \begin{equation}
\label{est0} \varphi^{(1)}(s) \leq \sqrt{s} \sqrt{4\pi}\, A_0, \,\, \forall s \geq 0.\end{equation} Consequently, for any $0 <  \varepsilon <  1$ and $s\leq 1-\varepsilon$, we get that
\begin{equation}
\label{est1}\varphi^{(1)}(s) \leq (1- \frac \varepsilon 2) \sqrt{4\pi}\, A_0.
\end{equation}
Now, let us consider $0< \varepsilon_0< \frac 1 2 $  to be fixed later on and denote by $E^*_n$ the set
$$ E^*_n :=\Big\{x\in \R^2;\, \,g_n^{(1)}(x)\,\geq \sqrt{{2\alpha_n^{(1)}}}\;\Big(1- \frac {\varepsilon_0} {10}\Big) \, A_0 \Big\}, $$ where $g_n^{(1)}(x) = \sqrt{\frac{\alpha_n^{(1)}}{2\pi}}\;\varphi^{(1)}\left(\frac{-\log|x|}{\alpha_n^{(1)}}\right) $. Then, we have the following useful result: \begin{lem}
\label{estset1} There exists an integer $N_0$ such that for $n \geq N_0$
\begin{equation}
\label{noradhearteq0}
 |E^*_n | \geq {\rm e}^{-2\alpha_n^{(1)}},\end{equation} where ~$|\cdot |$ denotes the Lebesgue measure.
 \end{lem}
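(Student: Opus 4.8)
The plan is to reduce the statement to a purely one-dimensional fact about the profile $\varphi^{(1)}$ and then read off a large annulus sitting inside $E_n^*$. First I would substitute $s=-\log|x|/\alpha_n^{(1)}$ into the definition of $E_n^*$ and divide the defining inequality $g_n^{(1)}(x)\geq \sqrt{2\alpha_n^{(1)}}\,(1-\varepsilon_0/10)A_0$ by $\sqrt{\alpha_n^{(1)}}$; recalling $g_n^{(1)}(x)=\sqrt{\alpha_n^{(1)}/(2\pi)}\,\varphi^{(1)}(s)$ and the normalization $\varphi^{(1)}(1)=\sqrt{4\pi}\,A_0$ already recorded above, this is exactly equivalent to
\[
\varphi^{(1)}(s)\ \geq\ 2\sqrt{\pi}\,\Big(1-\tfrac{\varepsilon_0}{10}\Big)A_0\ =\ \Big(1-\tfrac{\varepsilon_0}{10}\Big)\varphi^{(1)}(1).
\]
Hence $x\in E_n^*$ if and only if $-\log|x|/\alpha_n^{(1)}$ belongs to the super-level set $S:=\{\,s\geq 0:\ \varphi^{(1)}(s)\geq (1-\varepsilon_0/10)\varphi^{(1)}(1)\,\}$, which contains the point $s=1$.

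Next I would use that a profile is continuous (indeed $\varphi^{(1)}\in{\cP}\subset C^{1/2}(\R)$) together with $\varphi^{(1)}(1)=\sqrt{4\pi}\,A_0>0$: there is $\delta>0$, depending only on $\varepsilon_0$ and on $\varphi^{(1)}$, with $|\varphi^{(1)}(s)-\varphi^{(1)}(1)|\leq\frac{\varepsilon_0}{10}\varphi^{(1)}(1)$ whenever $|s-1|\leq\delta$, so that $[1,1+\delta]\subset S$. It is worth noting that no monotonicity of $\varphi^{(1)}$ is used here, which is the cautious choice since $\varphi^{(1)}$ need not be monotone once the profiles have been reordered so that the first one carries the Orlicz norm $A_0$. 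Translating back through $|x|={\rm e}^{-\alpha_n^{(1)}s}$, this gives
\[
E_n^*\ \supseteq\ \big\{\,x\in\R^2:\ {\rm e}^{-\alpha_n^{(1)}(1+\delta)}\leq|x|\leq {\rm e}^{-\alpha_n^{(1)}}\,\big\},
\]
an annulus of Lebesgue measure $\pi\big({\rm e}^{-2\alpha_n^{(1)}}-{\rm e}^{-2\alpha_n^{(1)}(1+\delta)}\big)=\pi\,{\rm e}^{-2\alpha_n^{(1)}}\big(1-{\rm e}^{-2\delta\alpha_n^{(1)}}\big)$.

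Finally, since $\alpha_n^{(1)}\to\infty$ one has $\pi\big(1-{\rm e}^{-2\delta\alpha_n^{(1)}}\big)\to\pi>1$, so there is an integer $N_0$ for which $\pi\big(1-{\rm e}^{-2\delta\alpha_n^{(1)}}\big)\geq 1$ as soon as $n\geq N_0$; combined with the previous display this yields $|E_n^*|\geq {\rm e}^{-2\alpha_n^{(1)}}$, that is, \eqref{noradhearteq0}. The argument is short and elementary, so there is no real obstacle; the only points deserving attention are getting the constant $\sqrt{4\pi}$ in the first step to match the normalization $\varphi^{(1)}(1)=\sqrt{4\pi}A_0$, and taking care to invoke only the continuity of the profile rather than a monotonicity property that the reordering of profiles could have destroyed.
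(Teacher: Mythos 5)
Your proof is correct and follows essentially the same route as the paper: both reduce membership in $E^*_n$ to the super-level inequality $\varphi^{(1)}(s)\geq(1-\tfrac{\varepsilon_0}{10})\sqrt{4\pi}\,A_0$, use continuity of the profile at $s=1$ (where $\varphi^{(1)}(1)=\sqrt{4\pi}A_0$) to capture an annulus inside $E^*_n$, and conclude since $\alpha_n^{(1)}\to\infty$. The only cosmetic difference is that the paper takes a symmetric window $|s-1|\leq\eta$ (which incidentally also yields the sharper bound $|E^*_n|\gtrsim {\rm e}^{-2\alpha_n^{(1)}(1-\eta)}$ recorded in Remark \ref{estmeasure}), whereas you use the one-sided interval $[1,1+\delta]$, which still suffices for \eqref{noradhearteq0}.
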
 \begin{proof} By definition, saying that $x$ belongs to $E^*_n$ is equivalent to saying that
 $$  \varphi^{(1)}\left(\frac{-\log|x|}{\alpha_n^{(1)}}\right) \geq  \Big(1- \frac {\varepsilon_0} {10}\Big) \sqrt{4\pi}\, A_0.  $$ Since $\varphi^{(1)}$  is continuous,   there exists $\eta > 0$ such that
 $$|s- 1| \leq \eta \Rightarrow \Big| \,\varphi^{(1)}(s)- \varphi^{(1)} (1) \Big| \leq \frac {\varepsilon_0} {10} \sqrt{4\pi}\, A_0.$$ Knowing that  $ \varphi^{(1)}(1) =  \sqrt{4\pi}\, A_0$, we deduce that  $ \varphi^{(1)}(s)  \geq \Big(1- \frac {\varepsilon_0} {10}\Big) \sqrt{4\pi}\, A_0$ provided that  $|s- 1| \leq \eta$. In other words, for  $ {\rm e}^{- \alpha_n^{(1)} (1 + \eta)} \leq |x| \leq {\rm e}^{- \alpha_n^{(1)} (1 - \eta)} $, we have $$ g_n^{(1)}(x)\geq \sqrt{\frac{\alpha_n^{(1)}}{2\pi}}\;\Big(1- \frac {\varepsilon_0} {10}\Big) \sqrt{4\pi}\, A_0.$$ This  achieves the proof of the lemma. \end{proof}
 \begin{rem}\label{estmeasure}
 It is clear that we have shown above  that there is $\eta > 0$ such that for $n$ big enough $|E^*_n | \gtrsim {\rm e}^{-2\alpha_n^{(1)}(1-\eta)}$. In fact in light of estimate \eqref{est1}, we have  $|E^*_n | \lesssim {\rm e}^{-2\alpha_n^{(1)}(1-\frac{\varepsilon_0}{5})}$.
 \end{rem}
%@@@@@@@@@@@@@@@@@@@@@@@@@@@@@@@@@@@@%@@@@@@@@@@@@@@@@@@@@@@@@@@@@@@@@@@@@%@@@@@@@@

%@@@@@@@@@@@@@@@@@@@@@@@@@@@@@@@@@@@@%@@@@@@@@@@@@@@@@@@@@@@@@@@@@@@@@@@@@%@@@@@@@@

\subsection{Reduction to one scale}
\label{redonescale}
Our aim now is  to  reduce to  one scale. For this purpose, we introduce, for any $0<a<M$, the odd cut-off function $\Theta_a^M$ as follows
\begin{eqnarray*}
\Theta_a^M(s)&=&\; \left\{
\begin{array}{cllll}0 \quad&\mbox{if}&\quad
0\leq\,s\leq a/2,\\\\ 2s-a \quad
&\mbox{if}&\quad a/2\leq s\leq a ,\\\\
s \quad&\mbox{if}&\quad a\leq s \leq M,\\\\
M\quad&\mbox{if}&\quad s\geq M.
\end{array}
\right.
\end{eqnarray*}
% Fixing  $0 < a \leq \frac {1}{10} \sqrt{4\pi } A_0 $ and $M \geq 10 \sqrt{4\pi } A_0$, we shall  write in what follows
% $$ u_n= \wt{u}_{n,a}^M + (u_n- u_{n,a}^M),$$
%with $ \wt{u}_{n,a}^M:= \sqrt{\frac{\alpha_n^{(1)}}{2\pi}}\,\Theta^M_a \Big(\sqrt{\frac{2\pi}{\alpha_n^{(1)}}}\,u_n\Big)$.
 The following result concerning  the truncation  of $u_n$ at the scale $\alpha_n^{(1)}$ will be crucial in the sequel:
\begin{prop}
\label{truncate} Let $(u_n)$ be a sequence in $H^1(\R^2)$ satisfying the assumptions of Theorem \ref{noradmain}, and define for fixed $0< a < M$,
$$ \wt{u}_{n,a}^M:= \sqrt{\frac{\alpha_n^{(1)}}{2\pi}}\,\Theta^M_a \left(\sqrt{\frac{2\pi}{\alpha_n^{(1)}}}\,u_n\right).$$
Then  for any $\varepsilon > 0$, there exist   $N \in \N$
% and   $\tilde{\varphi}^{(1)} \in {\cP}$
and  $0< a < M$
such that
$$\left(\wt{u}_{n,a}^M\right)^*(x) = \sqrt{\frac{\alpha_n^{(1)}}{2\pi}}\; {\varphi}^{(1)}\left(\frac{-\log|x|}{\alpha_n^{(1)}}\right) +\wt{r}_n(x),$$
with $\|{\rm
\wt{r_n}}\|_{\mathcal
L} \leq \varepsilon$ for any $n \geq N$.
 \end{prop}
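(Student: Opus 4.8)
The plan is to reduce the statement, by Schwarz symmetrization, to a one–dimensional estimate in the logarithmic variable, and then to play the orthogonality of the scales against the decay $\psi(s)/\sqrt{s}\to 0$ of \eqref{behavpsi}. First, since $\Theta_a^M$ is odd, continuous, nondecreasing and $\Theta_a^M(0)=0$, one has $|\Theta_a^M(u_n)|=\Theta_a^M(|u_n|)$, so (a standard property of rearrangements, cf. Appendix \ref{reacap}) $\big(\wt{u}_{n,a}^M\big)^*=\sqrt{\alpha_n^{(1)}/2\pi}\;\Theta_a^M\!\big(\sqrt{2\pi/\alpha_n^{(1)}}\,u_n^*\big)$; hence it suffices to bound in $\cL$ the difference of this function and $g_n^{*(1)}(x):=\sqrt{\alpha_n^{(1)}/2\pi}\,\varphi^{(1)}\!\big(-\log|x|/\alpha_n^{(1)}\big)$. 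Using the decomposition of $u_n^*$ from Theorem \ref{main}, I fix $\ell$ (to be made large at the end) and write, with $s=-\log|x|/\alpha_n^{(1)}$, $\sqrt{2\pi/\alpha_n^{(1)}}\,u_n^*=\varphi^{(1)}(s)+P_n(s)+R_n(s)$, where $P_n(s)=\sum_{j=2}^{\ell}\sqrt{\alpha_n^{(j)}/\alpha_n^{(1)}}\,\varphi^{(j)}\!\big(\alpha_n^{(1)}s/\alpha_n^{(j)}\big)$ carries the orthogonal scales and $R_n$ carries ${\rm r}_n^{(\ell)}$ (so that, as functions of $x$, $\sqrt{\alpha_n^{(1)}/2\pi}\,P_n=\sum_{j=2}^{\ell}g_n^{*(j)}$ and $\sqrt{\alpha_n^{(1)}/2\pi}\,R_n={\rm r}_n^{(\ell)}$). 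I then split $\big(\wt{u}_{n,a}^M\big)^*-g_n^{*(1)}=\big[(\wt{u}_{n,a}^M)^*-\wt{g}_n^{*(1)}\big]+\big[\wt{g}_n^{*(1)}-g_n^{*(1)}\big]$, with $\wt{g}_n^{*(1)}:=\sqrt{\alpha_n^{(1)}/2\pi}\,\Theta_a^M(\varphi^{(1)}(s))$.

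The second bracket is the truncation error on the first concentration: $\Theta_a^M(\varphi^{(1)}(s))$ differs from $\varphi^{(1)}(s)$ only on $\{\varphi^{(1)}(s)<a\}\cup\{\varphi^{(1)}(s)>M\}$, and there $|\Theta_a^M(\varphi^{(1)}(s))-\varphi^{(1)}(s)|\le\varphi^{(1)}(s)$; by monotonicity of the Orlicz norm (Lemma \ref{monoto}), $\|\wt{g}_n^{*(1)}-g_n^{*(1)}\|_{\cL}$ is at most the $\cL$-norm of $g_n^{*(1)}$ restricted to $\{\varphi^{(1)}(s)<a\}$ plus that restricted to $\{\varphi^{(1)}(s)>M\}$. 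By the argument proving \eqref{profile}, applied on these level sets, these quantities tend as $n\to\infty$ to $\tfrac1{\sqrt{4\pi}}\sup_{\{\varphi^{(1)}<a\}}\varphi^{(1)}(s)/\sqrt s$ and $\tfrac1{\sqrt{4\pi}}\sup_{\{\varphi^{(1)}>M\}}\varphi^{(1)}(s)/\sqrt s$, both of which vanish as $a\to0$ and $M\to\infty$ by \eqref{behavpsi}; so this bracket is $\le\varepsilon/3$ once $a$ is small and $M$ large.

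The main step is the first bracket, $(\wt{u}_{n,a}^M)^*-\wt{g}_n^{*(1)}=\sqrt{\alpha_n^{(1)}/2\pi}\big(\Theta_a^M(\varphi^{(1)}+P_n+R_n)-\Theta_a^M(\varphi^{(1)})\big)$. Using that $\Theta_a^M$ is $2$-Lipschitz, vanishes below $a/2$ and is capped at $M$, I split $\R^2$ according to which of these regimes $\Theta_a^M$ is in for $u_n^*$ and for $g_n^{*(1)}$. On the cap region $C_n:=\{u_n^*>M\sqrt{\alpha_n^{(1)}/2\pi}\}$ (a ball, as $u_n^*$ is radially decreasing) the difference is $\le M\sqrt{\alpha_n^{(1)}/2\pi}\,\mathbf{1}_{C_n}$, and the point is that $u_n^*>M\sqrt{\alpha_n^{(1)}/2\pi}$ forces one of $\varphi^{(1)}(s),P_n(s),|R_n(s)|$ to be large, which — using $|\log(\alpha_n^{(j)}/\alpha_n^{(1)})|\to\infty$ together with $\varphi^{(j)}(\sigma)\le\eta\sqrt{\sigma}$ for $\sigma$ small (from \eqref{behavpsi}) when $\alpha_n^{(j)}\gg\alpha_n^{(1)}$, and the Orlicz–Chebyshev inequality for $R_n$ — makes $|C_n|\lesssim{\rm e}^{-cM^2\alpha_n^{(1)}/\eta^2}$ plus pieces $\lesssim{\rm e}^{-c\alpha_n^{(j)}}$, so $\|M\sqrt{\alpha_n^{(1)}/2\pi}\,\mathbf{1}_{C_n}\|_{\cL}\lesssim\eta+o(1)$. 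Off $C_n$ one is on the identity/lower–cut–off part of $\Theta_a^M$: where $\varphi^{(1)}(s)\ge a$, $|(\wt{u}_{n,a}^M)^*-\wt{g}_n^{*(1)}|\le\sqrt{\alpha_n^{(1)}/2\pi}\big(a+|P_n|+|R_n|\big)$ on $\{\varphi^{(1)}(s)\ge a\}\subseteq\{|x|\le{\rm e}^{-\alpha_n^{(1)}\sigma_0}\}$ with $\sigma_0:=\inf\{s:\varphi^{(1)}(s)\ge a\}>0$; the first summand has $\cL$-norm $\approx a/(2\sqrt{\pi\sigma_0})\to0$ by \eqref{behavpsi}, the $R_n$-summand is ${\rm r}_n^{(\ell)}$ in $\cL$, and $\sqrt{\alpha_n^{(1)}/2\pi}\,|P_n|\,\mathbf{1}_{\{|x|\le{\rm e}^{-\alpha_n^{(1)}\sigma_0}\}}=\sum_{j\ge2}g_n^{*(j)}\mathbf{1}_{\{|x|\le{\rm e}^{-\alpha_n^{(1)}\sigma_0}\}}$ is, for $\alpha_n^{(j)}\ll\alpha_n^{(1)}$, the function $g_n^{*(j)}$ evaluated only at $-\log|x|/\alpha_n^{(j)}\ge\alpha_n^{(1)}\sigma_0/\alpha_n^{(j)}\to\infty$, whose $\cL$-norm tends to $0$ again by \eqref{behavpsi} (at $\infty$), the case $\alpha_n^{(j)}\gg\alpha_n^{(1)}$ being treated as on $C_n$; finally, where $\varphi^{(1)}(s)<a$, the difference vanishes except on $\{P_n(s)\gtrsim a\}\cup\{|R_n(s)|\gtrsim a\}$, whose contributions are controlled exactly as above once $\ell$ (hence $\|{\rm r}_n^{(\ell)}\|_{\cL}$) and the parameters $\eta$ of \eqref{behavpsi} are small relative to $a,M,\varepsilon$. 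Collecting everything and choosing, in order, $\varepsilon$, then $a$ small and $M$ large, then $\ell$ and the $\eta$'s small, then $N$, gives $\|(\wt{u}_{n,a}^M)^*-g_n^{*(1)}\|_{\cL}\le\varepsilon$ for $n\ge N$.

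The main obstacle is precisely the interaction in Step 3 of the truncation with the concentrations carried by scales orthogonal to $\alpha_n^{(1)}$: a crude estimate produces an $O(1)$, non–vanishing, contribution to the $\cL$-norm — from the capped plateau of a scale $\alpha_n^{(j)}\gg\alpha_n^{(1)}$, and from the part of a small scale sitting above the cut-off $a$ — and one genuinely needs the refined decay $\varphi^{(j)}(\sigma)=o(\sqrt{\sigma})$ as $\sigma\to0$ (and symmetrically as $\sigma\to\infty$) recorded in \eqref{behavpsi} to confine these plateaux to sets of measure so small — of order ${\rm e}^{-cM^2\alpha_n^{(1)}/\eta^2}$ with $\eta$ a freely small parameter, or ${\rm e}^{-c\alpha_n^{(j)}}$ with $\alpha_n^{(j)}\gg\alpha_n^{(1)}$ — that their Orlicz norms are actually negligible.
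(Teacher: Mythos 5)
Your proof is correct and follows essentially the same route as the paper: you commute the rearrangement with the nondecreasing cut-off, insert the radial decomposition of $u_n^*$ from Theorem \ref{main}, and then annihilate the orthogonal scales and the remainder by combining the decay \eqref{behavpsi} with $|(\Theta_a^M)'|\le 2$, $\Theta_a^M\le M$ and Trudinger--Moser, which is precisely the content of the paper's Lemma \ref{reduction}, while your direct Orlicz estimate of $\Theta_a^M\circ\varphi^{(1)}-\varphi^{(1)}$ is an equivalent substitute for the paper's convergence of $\Theta_a^M\circ\varphi^{(1)}$ to $\varphi^{(1)}$ in $\cP$ followed by \eqref{2D-embed}. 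The only point to tighten is the cap region: on the piece of $C_n$ where $\varphi^{(1)}$ itself exceeds $M/3$ the measure is only of order ${\rm e}^{-cM^2\alpha_n^{(1)}}$ (not ${\rm e}^{-cM^2\alpha_n^{(1)}/\eta^2}$), and its Orlicz contribution is small because $\varphi^{(1)}(s)/\sqrt{s}$ is uniformly small on $\{\varphi^{(1)}\ge M/3\}\subset\{s\gtrsim M^2\}$ once $M$ is large, that is by \eqref{behavpsi} at infinity applied to $\varphi^{(1)}$, which is consistent with your parameter ordering and so causes no actual gap.
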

\begin{proof}
In light of Proposition \ref{norminvar}, $u_n^*$  satisfies hypothesis of Theorem \ref{main}  which allows us
  to  find  $\ell\geq 1$ such that
\beqn
u_n^*(x)&=&\Sum_{j=1}^{\ell}\,\sqrt{\frac{\alpha_n^{(j)}}{2\pi}}\;\varphi^{(j)}\left(\frac{-\log|x|}{\alpha_n^{(j)}}\right)+{\rm
r}_n^{(\ell)} (x),\quad\limsup_{n\to\infty}\;\|{\rm
r}_n^{(\ell)} \|_{\mathcal
L}
% \stackrel{\ell\to\infty}\longrightarrow 0
\leq \frac\eps2 \\
&=&\Sum_{j=1}^{\ell}\,g_n^{(j)}(x)+{\rm r}_n^{(\ell)}(x),
\eeqn
where $(\underline{\alpha}^{(j)})$ is a sequence of pairwise orthogonal scales and $(\varphi^{(j)})$  a sequence of profiles in
${\cP}$.\\

Since the cut-off function $\Theta_a^M$ is non-decreasing, it comes in view of \eqref{rearang}
\beqn
\left(\wt{u}_{n,a}^M\right)^*(x)&=& \sqrt{\frac{\alpha_n^{(1)}}{2\pi}}\,\Theta^M_a \left(\sqrt{\frac{2\pi}{\alpha_n^{(1)}}}\,u_n^*(x)\right)\\
&=& \sqrt{\frac{\alpha_n^{(1)}}{2\pi}}\,\Theta^M_a \left(\sqrt{\frac{2\pi}{\alpha_n^{(1)}}}\left(\Sum_{j=1}^{\ell}\,g_n^{(j)}(x)+{\rm r}_n^{(\ell)}(x)\right)\right).
\eeqn
We are  then reduced to the proof of  the following lemma.
\begin{lem}
\label{reduction}
Let $(\underline{\alpha}^{(j)})_{1\leq j\leq\ell}$  be a family of pairwise orthogonal scales, $(\varphi^{(j)})_{1\leq j\leq\ell}$ a family of profiles, and set
$$
v_n(x)=\sum_{j=1}^\ell\,\sqrt{\frac{\alpha_n^{(j)}}{2\pi}}\,\varphi^{(j)}\left(-\frac{\log|x|}{\alpha_n^{(j)}}\right)
 +{\rm r}_n^{(\ell)}(x),
$$
 where $\limsup_{n\to\infty}\; \|{\rm r}_n^{(\ell)}\|_{\cL} \leq \frac\eps2$.
% \to 0$ as $n\to\infty$.
Then, for any $1\leq k\leq\ell$, we have (as $n\to\infty$),
\begin{equation}
\label{Orl0}
\limsup_{n\to\infty}\; \left\|\sqrt{\frac{\alpha_n^{(k)}}{2\pi}}\,\Theta_a^M\left(\sqrt{\frac{2\pi}{\alpha_n^{(k)}}}\,v_n(x)\right)-
\sqrt{\frac{\alpha_n^{(k)}}{2\pi}}\,\Theta_a^M\left(\varphi^{(k)}\left(-\frac{\log|x|}{\alpha_n^{(k)}}\right)\right)\right\|_{\cL}
 \leq \frac\eps2 .
\end{equation}
In particular, the profile associated to $\sqrt{\frac{\alpha_n^{(k)}}{2\pi}}\,\Theta_a^M\left(\sqrt{\frac{2\pi}{\alpha_n^{(k)}}}\,
 u_n^*\right)$ is $\Theta_a^M \circ \varphi^{(k)}$.
\end{lem}
\begin{proof} For simplicity, we assume that $\ell=2$ and write
$$
v_n(x)=\sqrt{\frac{\alpha_n}{2\pi}}\,\varphi\left(-\frac{\log|x|}{\alpha_n}\right)+\sqrt{\frac{\beta_n}{2\pi}}\,\psi\left(-\frac{\log|x|}{\beta_n}\right)+{\rm r}_n (x)\,,
$$
where $(\alpha_n)\perp (\beta_n)$ are two scales, $\varphi, \psi$ are two profiles, and $\|{\rm r}_n\|_{\cL}\to 0$ as $n\to\infty$.\\

Let $\varepsilon>0$ to be chosen later. It suffices to prove that,
for some $\lambda=\lambda(\varepsilon)\to 0$ as $\eps$ goes to zero, we have (for $n$ big enough)
\bq
\label{orlicz0}
\int_{\R^2}\,\left({\rm e}^{|\frac{g_n(x)}{\lambda}|^2}-1\right)\,dx\leq \kappa,
\eq
where\footnote{For simplicity, we write $\Theta$ instead of $\Theta^M_a$.}
$$
g_n(x)=\sqrt{\frac{\alpha_n}{2\pi}}\left[\Theta\left(\varphi\left(-\frac{\log|x|}{\alpha_n}\right)+
\sqrt{\frac{\beta_n}{\alpha_n}}\,\psi\left(-\frac{\log|x|}{\beta_n}\right)+\sqrt{\frac{2\pi}{\alpha_n}}\,{\rm r}_n(x)\right)-\Theta\left(\varphi\left(-\frac{\log|x|}{\alpha_n}\right)\right)\right]\,.
$$

Since $\frac{|\psi(s) |  + |\varphi(s)|  }{\sqrt{s}}$ goes to zero as $s$ tends to
 either  $0$ or  $\infty$, there exists $0<s_0<S_0$ such that
\bq
\label{phipsi1}
     |\psi(s)| + |\varphi(s)|  \leq \varepsilon\sqrt{s},\quad s\in [0,s_0]\cup [S_0,\infty[.
\eq

$ $ 
 We first take the case
\bq
\label{cas1}
\frac{\beta_n}{\alpha_n}\longrightarrow \infty\,.
\eq
For $\lambda>0$, we have
\beqn
\int_{\R^2}\,\left({\rm e}^{|\frac{g_n(x)}{\lambda}|^2}-1\right)\,dx&=&\int_{\cC_n}\,\left({\rm e}^{|\frac{g_n(x)}{\lambda}|^2}-1\right)\,dx+\int_{\R^2\backslash\cC_n}\,\left({\rm e}^{|\frac{g_n(x)}{\lambda}|^2}-1\right)\,dx\\&=& {\mathbf I}_n+{\mathbf J}_n,
\eeqn
where $\cC_n=\Big\{ |x|\leq {\rm e}^{-s_0\beta_n}\,\Big\}$.

Notice, that for $x \in \R^2\backslash \cC_n$, we have
$ \Big|  \sqrt{\frac{\beta_n}{2\pi}}\,\psi\left(-\frac{\log|x|}{\beta_n}\right)   \Big| \leq \frac{\eps}{\sqrt{2\pi}}
 \sqrt{- \log|x| } $ and hence using that $|\Theta'| \leq 2$, we get
for  $x \in\R^2\backslash  \cC_n$:
$$
|g_n(x)| \leq 2 \frac{\eps}{\sqrt{2\pi}}\sqrt{- \log|x| }  + 2 |r_n (x)|.
$$
Taking advantage of the fact that the elementary concentration $\sqrt{\frac{\beta_n}{2\pi}}\,\psi\left(-\frac{\log|x|}{\beta_n}\right)$ is supported in the unit ball $B_1$, we deduce that
\beqn
{\mathbf J}_n&\leq &\int_{B_1\backslash  \cC_n}\,\left({\rm e}^{ 8 \frac{\eps^2}{{2\pi} \lambda^2}
 | \log|x| |  } -1 \right) \,dx +
\int_{  \R^2  \backslash  \cC_n}  \left( {\rm e}^{  8 \frac{|r_n (x)|^2}{\lambda^2}  }  -1\right)\,dx\\
&\leq& \kappa,
\eeqn
for $n$ sufficiently large  and $\eps$ small enough satisfying $ 8\eps^2 < 2\pi \lambda^2 $.\\

To control ${\mathbf I}_n $, we use that $\Theta $ is bounded:
\beqn
{\mathbf I}_n&\leq &\int_{\cC_n}\,\left({\rm e}^{\frac{2M^2\alpha_n}{\pi\lambda^2}}-1\right)\,dx\\
&\leq&\pi\left({\rm e}^{\frac{2M^2\alpha_n}{\pi\lambda^2}}-1\right)
{\rm e}^{-2s_0\beta_n}\to 0.
\eeqn

 Now, we  assume that
\bq
\label{cas2}
\frac{\beta_n}{\alpha_n}\longrightarrow 0\,.
\eq

For $\lambda>0$, we have
\beqn
\int_{\R^2}\,\left({\rm e}^{|\frac{g_n(x)}{\lambda}|^2}-1\right)\,dx&=&\int_{\cD_n}\,\left({\rm e}^{|\frac{g_n(x)}{\lambda}|^2}-1\right)\,dx+\int_{\R^2\backslash\cD_n}\,\left({\rm e}^{|\frac{g_n(x)}{\lambda}|^2}-1\right)\,dx\\&=& {\mathbf I}_n+{\mathbf J}_n,
\eeqn
where $\cD_n=\Big\{ |x|\leq {\rm e}^{-S_0\beta_n}\,\Big\}$. \\

Notice, that for $x \in  \cD_n$, we have
$ \Big|  \sqrt{\frac{\beta_n}{2\pi}}\,\psi\left(-\frac{\log|x|}{\beta_n}\right)   \Big| \leq \frac{\eps}{\sqrt{2\pi}}
 \sqrt{- \log|x| } $ and hence using that $|\Theta'| \leq 2$, we get
for  $x \in   \cD_n$:
$$
|g_n(x)| \leq   2  \frac{\eps}{\sqrt{2\pi}}\sqrt{- \log|x| }  + 2 |r_n (x)|.
$$
It follows that
\beqn
{\mathbf I}_n&\leq &\int_{  \cD_n}\,\left({\rm e}^{ 8 \frac{\eps^2}{{2\pi} \lambda^2}
 | \log|x| |  } -1 \right) \,dx +
\int_{    \cD_n}  \left( {\rm e}^{  8 \frac{|r_n (x)|^2}{\lambda^2}  }  -1\right)\,dx\\
&\leq& \kappa,
\eeqn
for $n$ sufficiently big and $ \eps$ small enough so that $ 8\eps^2 < 2\pi \lambda^2 $.\\

To control the integral ${\mathbf J}_n $, we use that for  $x \in\R^2\backslash  \cD_n$ and $n$ large enough,
 we have   $\frac{-\log|x|}{ \alpha_n}  < S_0 \frac{\beta_n}{\alpha_n}  \to 0 $. Thus
$$\Big|\varphi\left(\frac{-\log|x|}{ \alpha_n}\right)\Big| \leq \eps \sqrt{ \frac{-\log|x|}{ \alpha_n}   }
 \leq \eps   \sqrt{ \frac{ S_0 \beta_n }{ \alpha_n}   }  .$$
Let $M_1 = \displaystyle\sup_{s< S_0} |\psi(s)| $. Using the fact that $\Theta$ is non-decreasing, $|\Theta(s)| \leq |s|$ and
$$
|\Theta(s_1+s_2+s_3)| \leq |\Theta(3s_1)| + |\Theta(3s_2)| + |\Theta(3s_3)|,
$$
we infer
$$
|g_n(x)| \leq \sqrt{\frac{\alpha_n}{2\pi}}\left[ 2  \Theta\Big( 3  \sqrt{\eps}   \sqrt{ \frac{ S_0 \beta_n }{ \alpha_n}   }  \Big)   +  \Theta\Big( 3 M_1  \sqrt{ \frac{   \beta_n }{ \alpha_n} }  \Big)  \right]   +   6|r_n (x)| = 6 |r_n (x)|,
$$
for $n$ large enough.  The conclusion follows at least for $l=2$.\\

%%%%%%%%%%%%%%%%%%%%%%%%%%

To extend this proof to the case $l\geq 3$, we can without loss of generality assume that
$ \frac{\alpha_n^{(j)} }{ \alpha_n^{(j+1)}  }   \to  0  $ when $n$ goes to infinity, $1\leq j \leq l-1$.
 We also assume
that $1<k<l$ otherwise we need only one splitting as in the case $l=2$. Now, we have just to combine the two previous
cases by introducing
$$
\cC_n=\Big\{ |x|\leq {\rm e}^{-s_0  \alpha_n^{(k+1)}   }\,\Big\},\quad\mbox{and}\quad \cD_n=\Big\{ |x|\leq {\rm e}^{-S_0  \alpha_n^{(k-1)}   }\,\Big\},
$$
where
$0<s_0<S_0$ are  such that
\bq
\label{phipsij}
    |\varphi^{(j)}(s)|  \leq \varepsilon\sqrt{s},\quad s\in [0,s_0]\cup [S_0,\infty[,
\eq
 and split the integral into three parts: For $\lambda>0$, we denote
\beqn
\int_{\R^2}\,\left({\rm e}^{|\frac{g_n(x)}{\lambda}|^2}-1\right)\,dx&=&
\int_{\cC_n}\,\left({\rm e}^{|\frac{g_n(x)}{\lambda}|^2}-1\right)\,dx+
\int_{\cD_n\backslash\cC_n}\,\left({\rm e}^{|\frac{g_n(x)}{\lambda}|^2}-1\right)\,dx \\&+&
\int_{\R^2\backslash\cD_n}\,\left({\rm e}^{|\frac{g_n(x)}{\lambda}|^2}-1\right)\,dx
\\
&=& {\mathbf I}_n+{\mathbf J}_n +{\mathbf K}_n .
\eeqn
 The rest of the proof combines the previous two cases.
We have, therefore, completed the proof of Lemma \ref{reduction}.
\end{proof}

 $ $ 
 
Invoking Lebesgue theorem and Theorem 6.19 page 154 in \cite{Analysis}, we find that $\Theta^M_a ( {\varphi}^{(1)} ) $ goes to $ {\varphi}^{(1)} $ in $\mathcal P$ when $a$ goes to zero and $M$ goes to infinity. Indeed
\beqn
 & & \int_0^\infty\; \left|(\Theta^M_a)'\left({\varphi}^{(1)}(s)\right)-1\right|^2\,\left|{{\varphi}^{(1)}}'(s)\right|^2\,ds
  =\int_{\{{\varphi}^{(1)}(s)=0\}}\;\left|{{\varphi}^{(1)}}'(s)\right|^2\,ds \\&+&
  \int_{\{|{\varphi}^{(1)}(s)|>0\}}\;\left|(\Theta^M_a)'\left({\varphi}^{(1)}(s)\right)-1\right|^2
  \,\left|{{\varphi}^{(1)}}'(s)\right|^2\,ds\,.
  \eeqn 
 The first integral vanishes since ${{\varphi}^{(1)}}'(s)=0$ for almost $s$ in the set $\{{\varphi}^{(1)}(s)=0\}$, while the second integral can be dealt using the simple convergence of the sequence $\big((\Theta^M_a)'\left({\varphi}^{(1)}(s)\right)-1\big)$ to zero when $(a,M)$ goes to $(0,\infty)$ and the fact that $$\left|(\Theta^M_a)'\left({\varphi}^{(1)}(s)\right)-1\right|^2\,\left|{{\varphi}^{(1)}}'(s)\right|^2 \lesssim \left|{{\varphi}^{(1)}}'(s)\right|^2.$$
 Finally, as the function $\Theta^M_a$  vanishes at $0$, we easily get
 $$ \int_0^\infty\; \left|\Theta^M_a\left({\varphi}^{(1)}(s)\right)- {\varphi}^{(1)}(s)\right|^2\,{\rm e}^{-2s}\,ds \to 0\,. $$  \\ 
The proof of Proposition \ref{truncate} is then achieved.
\end{proof}

$ $

$ $ 
%@@@@@@@@@@@@@@@@@@@@@@@@@@@@@@@@@@@@%@@@@@@@@@@@@@@@@@@@@@@@@@@@@@@@@@@@@%@@@@@@@@

%@@@@@@@@@@@@@@@@@@@@@@@@@@@@@@@@@@@@%@@@@@@@@@@@@@@@@@@@@@@@@@@@@@@@@@@@@%@@@@@@@@
\subsection{Extraction of the cores and profiles}
\label{Extractcopro}
%@@@@@@@@@@@@@@@@@@@@@@@@@@@@@@@@@@@@%@@@@@@@@@@@@@@@@@@@@@@@@@@@@@@@@@@@@%@@@@@@@@
\subsubsection{Extraction of the first core}
\label{truncate-sub}
Due to the previous subsection and estimates \eqref{usfcond}-\eqref{high}, it is enough to make the
extraction from $ \wt{u}_{n,a}^M $. Ignoring the rest term,
 we are reduced to studying the case of a sequence   $u_n$  satisfying  \begin{equation} \label{case1}u_n^*(x) = \sqrt{\frac{\alpha_n^{(1)}}{2\pi}}\;\varphi^{(1)}\left(\frac{-\log|x|}{\alpha_n^{(1)}}\right)\end{equation}
and such that $ |\varphi^{(1)} |  $ is bounded by M. This assumption will only be made in
this subsection.
Our approach to extract cores and
profiles  relies on a diagonal subsequence extraction and the heart of
the matter is reduced to the proof of the following lemma:

\begin{lem}
\label{noradheart} Under the above notation, there exist  $\delta_0 > 0 $ and ~$N_1 \in \N $ such that for any $n \geq N_1$ there exists $ x_n $ such that
\begin{equation}
\label{noradhearteq}
 \frac {|E_n \cap B (x_n, {\rm e}^{- b \,\alpha_n^{(1)}} )|}{|E_n|} \geq \delta_0 A^2_0,\end{equation}
 where $ E_n :=\Big\{x\in \R^2;\,|u_n(x)|\,\geq \sqrt{ 2 \alpha_n^{(1)}}\;\Big(1- \frac {\varepsilon_0} {10}\Big) \, A_0 \Big\} $, $B (x_n, {\rm e}^{- b \,\alpha_n^{(1)}} )$ designates the ball of center $x_n$ and radius ${\rm e}^{- b \,\alpha_n^{(1)}} $ with $b= 1 - 2 \varepsilon_0$ and~$|\,\cdot \, |$  still denotes the Lebesgue measure.
 \end{lem}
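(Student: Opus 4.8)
The plan is to argue by contradiction, leaving the value of $\delta_0>0$ to be pinned down at the end in terms of the $H^1$‑bound $C$ of $(u_n)$, of $\varepsilon_0$ and of $A_0$. Write $\alpha_n=\alpha_n^{(1)}$, $\rho_n={\rm e}^{-b\alpha_n}$, $c=\sqrt2\,(1-\varepsilon_0/10)A_0$, so that $E_n=\{|u_n|\ge c\sqrt{\alpha_n}\}$, and record: ${\rm e}^{-2\alpha_n}\le|E_n|\lesssim{\rm e}^{-2\alpha_n(1-\varepsilon_0/5)}$ (Lemma~\ref{estset1} and Remark~\ref{estmeasure}); $\|\nabla u_n\|_{L^2}^2\le C$; the pointwise estimate \eqref{est0}, which yields $u_n^*(x)\le\sqrt2\,A_0\sqrt{-\log|x|}$; and the fact that $u_n^*$, hence $|u_n|$ after replacing $u_n$ by $|u_n|$, is supported in a set of measure $\pi$. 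Assume that for infinitely many $n$ no ball $B(x,\rho_n)$ carries more than $\delta_0A_0^2|E_n|$ of $E_n$.

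The next step is to manufacture a competitor for a capacity estimate whose cut‑off errors are negligible on the scale $\rho_n$. Put $t_n=(1-\varepsilon_0/2)c\sqrt{\alpha_n}$ and $v_n=\min\!\big(1,\,(u_n-t_n)_+/(\tfrac{\varepsilon_0}{2}c\sqrt{\alpha_n})\big)$. Then $v_n\equiv1$ on $E_n$, $v_n$ vanishes outside $\{u_n>t_n\}$, and $\|\nabla v_n\|_{L^2}^2\le\tfrac{4C}{\varepsilon_0^2c^2\alpha_n}$. By equimeasurability $|\{u_n>t_n\}|=|\{u_n^*>t_n\}|$, and since $u_n^*$ is radially decreasing with $u_n^*(x)\le\sqrt2A_0\sqrt{-\log|x|}$, the set $\{u_n^*>t_n\}$ is a disc contained in $B(0,{\rm e}^{-s_0\alpha_n})$ with $s_0=(1-\varepsilon_0/2)^2(1-\varepsilon_0/10)^2\ge1-6\varepsilon_0/5$; hence $\|v_n\|_{L^2}^2\le|\{u_n>t_n\}|\lesssim{\rm e}^{-2\alpha_n(1-6\varepsilon_0/5)}=\circ(\rho_n^2)$, precisely because $b=1-2\varepsilon_0<1-3\varepsilon_0/5$.

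Now cover $E_n$ by the closed squares $Q$ of the lattice $\rho_n\mathbb Z^2$; each lies in a ball of radius $\rho_n$, so by the contradiction hypothesis $a_Q:=|E_n\cap Q|<\delta_0A_0^2|E_n|$ for every $Q$. Taking a cut‑off $\chi_Q$ equal to $1$ on $2Q$, supported in $3Q$, with $|\nabla\chi_Q|\le2/\rho_n$, the function $v_n\chi_Q\in H^1_0(3Q)$ is $\ge1$ on a set of measure $\ge a_Q$ and vanishes outside a set of measure $\le9\rho_n^2$; by the Pólya–Szegő inequality and the annular capacity formula $\mathrm{cap}(B_r,B_R)=2\pi/\log(R/r)$ (Appendix~\ref{reacap}) one gets $\int|\nabla(v_n\chi_Q)|^2\ge 4\pi/\log(9\rho_n^2/a_Q)$, while the bounded overlap of $\{3Q\}$ gives $\sum_Q\int|\nabla(v_n\chi_Q)|^2\le18\|\nabla v_n\|_{L^2}^2+\tfrac{72}{\rho_n^2}\|v_n\|_{L^2}^2\le\tfrac{72C}{\varepsilon_0^2c^2\alpha_n}+\circ(\alpha_n^{-1})$. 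Grouping dyadically, $\mathcal G_K=\{Q:a_Q\in[{\rm e}^{-(K+1)\alpha_n},{\rm e}^{-K\alpha_n})\}$, the per‑square bound becomes $\ge 4\pi/((K-1+4\varepsilon_0)\alpha_n+\log 9)$ on $\mathcal G_K$, so $\#\mathcal G_K\lesssim(K+1)C/(\varepsilon_0^2c^2)$ uniformly in $K$ for $n$ large; summing, $\sum_{K\ge3}\sum_{Q\in\mathcal G_K}a_Q\lesssim \tfrac{C}{\varepsilon_0^2c^2}\,{\rm e}^{-3\alpha_n}<\tfrac14|E_n|$ for $n$ large. Consequently the squares with $a_Q\ge{\rm e}^{-3\alpha_n}$ carry at least $\tfrac34|E_n|$; since each has $a_Q<\delta_0A_0^2|E_n|$, there are at least $3/(4\delta_0A_0^2)$ of them, and on each the capacity bound is $\ge4\pi/((1+4\varepsilon_0)\alpha_n+\log 9)\gtrsim\alpha_n^{-1}$. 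Re‑injecting this into the overlap estimate forces $1/(\delta_0A_0^2)\lesssim C/(\varepsilon_0^2c^2)+\circ(1)$, which is impossible once $\delta_0$ is chosen below a positive threshold $\delta_\star(C,\varepsilon_0,A_0)\sim\varepsilon_0^2/C$. This proves the lemma, with $x_n$ the centre of any lattice square realizing $a_Q\ge\delta_0A_0^2|E_n|$.

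The delicate point is exactly this capacity/concentration estimate. The trivial consequence of Pólya–Szegő, $\|\nabla u_n\|_{L^2}^2\ge\|\nabla u_n^*\|_{L^2}^2\ge4\pi A_0^2$, only says that producing the large values of $u_n$ costs the energy of a single Moser bump; one must instead show that scattering $E_n$ over many $\rho_n$‑discs costs a proportionally larger energy, and feed this against the fixed bound $C$. Making that quantitative is what dictates the truncation at the shifted level $t_n$ (so that $\|v_n\|_{L^2}^2/\rho_n^2\to0$, which is where both the normalization \eqref{est0} and the radius exponent $b=1-2\varepsilon_0$ are used) and the dyadic bookkeeping that discards the squares carrying only an infinitesimal fraction of $E_n$; everything else is bookkeeping with the exponents in $\varepsilon_0$.
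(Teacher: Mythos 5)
Your argument is correct and rests on the same core strategy as the paper's proof: argue by contradiction, cover the plane by a lattice of cells of size ${\rm e}^{-b\alpha_n^{(1)}}$ with bounded overlap, extract from each cell meeting $E_n$ a capacity-type lower bound coming from the drop of $|u_n|$ between the level $\sqrt{2\alpha_n^{(1)}}(1-\tfrac{\varepsilon_0}{10})A_0$ and the level $\sim(1-\tfrac{\varepsilon_0}{2})$ (controlled through the rearrangement bounds \eqref{est0}--\eqref{est1}), and then count cells against the fixed $H^1$ bound to reach a contradiction for $\delta_0\lesssim\varepsilon_0^2/C$. The implementation differs in two points worth noting. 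First, instead of applying Proposition \ref{mainextraccore} directly to $u_n$ on each ball (which gives a fixed energy cost $\gtrsim\varepsilon_0^2A_0^2$ per ball), you truncate between the two levels to get $v_n$, localize with cutoffs, and use the annular capacity bound on $v_n\chi_Q$; this is equivalent in content (the level gap reappears as the normalization of $v_n$), at the price of the cross term $\rho_n^{-2}\|v_n\|_{L^2}^2$, which you correctly kill using the same super-level-set measure estimate that the paper uses to produce the ``low'' half of each ball, and which is exactly where $b=1-2\varepsilon_0$ enters. Second, the paper disposes of the cells carrying a negligible fraction of $E_n$ by invoking the compactness at infinity \eqref{noradmain-assum4} to localize in a fixed ball $B(0,R_0)$ and counting the $\sim{\rm e}^{2b\alpha_n^{(1)}}$ lattice balls there; you avoid this appeal altogether through the dyadic classes $\mathcal G_K$, bounding $\#\mathcal G_K$ by the energy estimate itself and summing, which makes that step self-contained and sharpens what is treated rather briefly in the paper. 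Two harmless slips: the parenthetical justification ``$b=1-2\varepsilon_0<1-3\varepsilon_0/5$'' should read $1-6\varepsilon_0/5$ (the condition actually needed, and satisfied), and strictly the contradiction hypothesis should be phrased for a fixed $\delta_0$ along a subsequence, exactly as you in fact use it when you pin down $\delta_\star$ at the end.
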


\begin{proof}
 Let us  assume by contradiction that \eqref{noradhearteq} does not hold. Consequently, up to a subsequence  extraction, we have for any $\delta > 0$, ~$n \in \N $ and  $x \in \R^2$
\begin{equation}
\label{noradhearteqnon} \frac {|E_n \cap B (x, {\rm e}^{- b \,\alpha_n^{(1)}} ) |}{|E_n|} \leq \delta A^2_0\,.\end{equation} In particular, inequality  \eqref{noradhearteqnon} occurs for any ball centered at a point belonging to ${\mathbf T}_n := ({\rm e}^{- b \, \alpha_n^{(1)}} \Z)\times ({\rm e}^{- b \, \alpha_n^{(1)}} \Z)$. It will be useful later on to notice that the balls $B (x, {\rm e}^{- b\, \alpha_n^{(1)}} )$ constitute a covering of $\R^2$ when the point  $x$ varies in
 ${\mathbf T}_n$  and that each point of $\R^2$ belongs at most  to four balls among $$ {\cB}_n := \Big\{\,B (x, {\rm e}^{- b\, \alpha_n^{(1)}}),\; x \in {\mathbf T}_n\Big\}.$$ This implies in particular that
\begin{equation}
\label{estcap1} \|\nabla u_n\|_{L^2(\R^2)}^2 \geq \frac 1 4 \sum_{{\mathbf B} \in {\cB}_n} \|\nabla u_n\|_{L^2({\mathbf B})}^2\,.\end{equation}

Now, our goal is to get a contradiction by proving that for $\delta $ small enough the sum $ \frac 1 4 \sum_{{\mathbf B} \in {\cB}_n} \|\nabla u_n\|_{L^2({\mathbf B})}^2$ exceeds the energy of~$u_n$.\\

For this purpose, let us  first estimate the energy of $u_n$ on each ball  ${\mathbf B} \in {\cB}_n$, making  use of capacity arguments. To do so, we shall take advantage of the fact that the values of $|u_n|$ on ${\mathbf B}$ varies at least from $\sqrt{ 2 \alpha_n^{(1)}}\;\Big(1- \frac {\varepsilon_0} {10}\Big)\, A_0$ on $E_n \cap {\mathbf B}$ to $\sqrt{ 2 \alpha_n^{(1)}}\,\Big(1- \frac {\varepsilon_0} {2}\Big)\, A_0$ on a set of Lebesgue measure greater than~$\frac { |{\mathbf B} |}{2}$, for all ~$n\geq N_{\varepsilon_0}$, where $N_{\varepsilon_0}$ is an integer big enough which only depends on $\varepsilon_0$.

Indeed, by definition of ~$E_n$, we have
$$|u_{n_{|E_n \cap {\mathbf B}}}| \geq \sqrt{ 2 \alpha_n^{(1)}}\;\Big(1- \frac {\varepsilon_0} {10}\Big) \, A_0. $$
Besides, thanks to  \eqref{est0} and \eqref{est1},   we get that for any $s\leq 1-\varepsilon_0$
\begin{equation}\label{estim} \varphi^{(1)}(s) \leq (1- \frac {\varepsilon_0} 2) \sqrt{4\pi}\, A_0.\end{equation}
Thus, if we designate by $ H^*_n$ the set
$$ H^*_n :=\Big\{\;x\in \R^2;\;\;\; u^*_n(x) \geq \sqrt{ 2 \alpha_n^{(1)}}\;\Big(1- \frac {\varepsilon_0} {2}\Big) \, A_0 \Big\}, $$ we obtain in view of \eqref{case1} and \eqref{estim}  that $H^*_n \subset B (0, {\rm e}^{- (1-\varepsilon_0 ) \alpha_n^{(1)}}) $ which implies that  $|H^*_n | \leq  \pi {\rm e}^{- 2(1-\varepsilon_0 ) \alpha_n^{(1)}} $. We deduce, by virtue of Proposition \ref{rearr} that  the set
$$ H_n :=\Big\{\;x\in \R^2;\;\;\; \,|u_n(x)|\,\geq \sqrt{ 2 \alpha_n^{(1)}}\;\Big(1- \frac {\varepsilon_0} {2}\Big) \, A_0 \Big\} $$
is of Lebesgue measure $|H_n | = |H^*_n | \leq  \pi {\rm e}^{- 2(1-\varepsilon_0 ) \alpha_n^{(1)}}$.

Since $ |{\mathbf B}| =  \pi {\rm e}^{- 2(1-2\varepsilon_0 ) \alpha_n^{(1)}}$ and ~$\alpha_n^{(1)} \to \infty$ as $n$ goes to infinity,
there exists $N_{\varepsilon_0}$ (which only depends on $\varepsilon_0$) such that   the ball ${\mathbf B}$ contains a set $\wt {\mathbf B}_n$ on which we have
$ |u_n |\leq \sqrt{ 2 \alpha_n^{(1)}}\;\Big(1- \frac {\varepsilon_0} {2}\Big) \, A_0\,$ and so that ~$ |\wt {\mathbf B}_n | \geq \frac { |{\mathbf B} |}{2}$  for all ~$n\geq N_{\varepsilon_0}$.\\

To achieve the proof of Lemma \ref{noradheart} and get a contradiction, we will estimate the energy of $u_n$ on the set $\wt  {\cB}_n$  of balls  ${\mathbf B} \in {\cB}_n $ satisfying $| E_n \cap {\mathbf B} | \geq {\rm e}^{-10 \, \alpha_n^{(1)}}$. Indeed by virtue of \eqref{noradmain-assum4} and as it was point out in Remarks \ref{mainrems}, we can reduce to the case where concentrations occur only in a fixed ball $B(0, R_0)$. But, since we cover the ball $B(0, R_0)$ by at most a number of order ${\rm e}^{ 2\, b \,\alpha_n^{(1)}}$ of balls that are part of the set ${\cB}_n$, necessary mass concentrated in $E_n$ is mainly due to balls of $\wt  {\cB}_n$. In effect, the contribution of balls ${\mathbf B} \in {\cB}_n $ satisfying $| E_n \cap {\mathbf B} | \leq {\rm e}^{-10 \, \alpha_n^{(1)}}$  is at most equal to ${\rm e}^{-8 \, \alpha_n^{(1)}} $ which is a  negligible part of $| E_n|$  in view of Remark \ref{estmeasure} and Proposition \ref{rearr}.\\

Taking advantage of the fact that the values of $|u_n|$ on ${\mathbf B} \in \wt  {\cB}_n$ varies at least from the value $\sqrt{ 2 \alpha_n^{(1)}}\;\Big(1- \frac {\varepsilon_0} {10}\Big) \, A_0$ on  the set $E_n \cap {\mathbf B}$  of Lebesgue measure greater than ~${\rm e}^{-10 \, \alpha_n^{(1)}}$  to the value  $\sqrt{ 2 \alpha_n^{(1)}}\;\Big(1- \frac {\varepsilon_0} {2}\Big) \, A_0$ on  $\wt {\mathbf B}_n$ which is  of Lebesgue measure greater than ~$\frac { |{\mathbf B} |}{2}$  for ~$n\geq N_{\varepsilon_0}$,  it follows from Proposition \ref{mainextraccore} that
\begin{eqnarray}
\|\nabla u_n\|_{L^2({\mathbf B})}^2 &\geq& C \,\Big(\big(\frac {\varepsilon_0} {2} - \frac {\varepsilon_0} {10}\big)\sqrt{ 2 \alpha_n^{(1)}}\;\, A_0\Big)^2 \frac 1 {\log\left( \frac {{\rm e}^{-(1-2 \varepsilon_0)\alpha_n^{(1)}}}{\sqrt{|E_n \cap {\mathbf B}|}}\right)} \nonumber  \\ & \geq & C  \varepsilon^2_0\, A^2_0 \,,\label{energy1}
\end{eqnarray}
where $C$ is an absolute constant.\\

Hence, thanks to  \eqref{estcap1},  we obtain
 \begin{equation}\label{estgen}
 4\,\|\nabla u_n\|_{L^2(\R^2)}^2 \geq    \# (\wt  {\cB}_n) \,C \varepsilon^2_0\, A^2_0, \end{equation}
where $\# (\wt  {\cB}_n)$  denotes the cardinal of $\wt  {\cB}_n$. But  by  \eqref{noradhearteqnon},   the covering  of  $\R^2$ by $ {\cB}_n$ and the fact that mass concentrated in $E_n$ is mainly due to balls of $\wt  {\cB}_n$, we have necessary $$\# (\wt  {\cB}_n) \geq \frac 1 { 2 \delta \, A^2_0}$$ which  yields a contradiction for $\delta$ small enough in view of \eqref{estgen}.

\end{proof}
%@@@@@@@@@@@@@@@@@@@@@@@@@@@@@@@@@@@@%@@@@@@@@@@@@@@@@@@@@@@@@@@@@@@@@@@@@%@@@@@@@@
\subsubsection{Extraction of the first profile}

\label{first profile}

Let us set
\begin{equation}\label{R} \psi_n (y,\theta)= \sqrt{\frac{2\pi}{\alpha_n^{(1)}}}\;v_n(\alpha_n^{(1)} y,\theta ),\end{equation}
where~$v_n(s,\theta )= \Big(\tau_{- x_n^{(1)}}u_n\Big) ({\rm e}^{-s} \cos \theta , {\rm e}^{-s} \sin \theta)$ and $x_n^{(1)}$ satisfies
\begin{equation}\label{R0}
 \frac {|E_n \cap B (x_n^{(1)}, {\rm e}^{- (1 - 2 \varepsilon_0) \,\alpha_n^{(1)}} )|}{|E_n|} \geq \delta_0 A^2_0, \end{equation} with  $E_n$  defined by Lemma \ref{noradheart}.  Using  the invariance of  Lebesgue measure  under translations, we get
\begin{equation}\label{R1}
\|\nabla u_n\|_{L^2}^2= \frac {1}{2\pi} \int_{\R}\int_0^{2\pi}\,|\partial_y\psi_n (y,\theta)|^2dy d\theta+ \frac {(\alpha_n^{(1)})^2}{2\pi}\int_{\R}\int_0^{2\pi}\,|\partial_\theta\psi_n (y,\theta)|^2dy d\theta.
\end{equation}
Since ~$\alpha_n^{(1)}$ tends to infinity and ~$(u_n)$ is bounded in ~$H^1(\R^2)$,  \eqref{R1} implies that
\begin{eqnarray}\partial_\theta\psi_n & \to & 0  \label{R4}\quad \mbox{and}
\\
\partial_y \psi_n & \rightharpoonup & g  \label{R5},
\end{eqnarray}
up to a subsequence extraction, in~$L^2 (y,\theta)$ as ~$n$ tends to infinity. Moreover
\begin{equation} \label{negcomp}\psi_n  \to 0  \, \, \mbox{in} \,  \, L^2 (] - \infty, 0 ] \times [ 0, 2\pi])  \,  \,  \mbox{as}  \,  \,  n \to \infty.\end{equation}
Indeed, we have
\begin{eqnarray*}
\| u_n\|_{L^2}^2 & = & \int_{\R} \int^{2\pi}_{0} |v_n (s,\theta)|^2 {\rm
e}^{-2s}\,ds\,d\theta \\ & = & \frac{(\alpha_n^{(1)})^2}{2\pi} \int_{\R} \int^{2\pi}_{0} |\psi_n (y,\theta)|^2 {\rm
e}^{-2\alpha_n^{(1)} y}\,dy\,d\theta\\ &\geq& \frac{(\alpha_n^{(1)})^2}{2\pi} \int^0_{-\infty} \int^{2\pi}_{0} |\psi_n (y,\theta)|^2\,dy\,d\theta,
\end{eqnarray*}
which ends the proof of \eqref{negcomp}. \\

The following lemma summarizes the principle properties of the function $g$ given above by  \eqref{R5}.
\begin{lem}
\label{summerize}
The function~$g$ given by \eqref{R5} only depends on the variable~$y$ and is null on $] - \infty, 0 ] $.  Besides, we have
\begin{equation} \label{imprel} \frac  1 {2\pi} \int^{2\pi}_{0}  \Big(\psi_n (y_2,\theta)-\psi_n (y_1,\theta)\Big)\,d\theta \to  \int^{y_2}_{y_1} g(y)\,dy,\end{equation}
for any $y_1, y_2  \in \R$,  as ~$n$ tends to infinity.
\end{lem}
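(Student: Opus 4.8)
The plan is to prove the three assertions in order: first that $g$ does not depend on $\theta$, then that $g$ vanishes on $]-\infty,0]$, and finally the averaged integral identity \eqref{imprel}. For the first point, I would start from the weak convergence $\partial_y\psi_n\rightharpoonup g$ in $L^2(y,\theta)$ together with $\partial_\theta\psi_n\to 0$ strongly in $L^2(y,\theta)$, which we have from \eqref{R4}--\eqref{R5}. The key observation is that $\partial_\theta$ and $\partial_y$ commute on $\psi_n$, so for any test function $\chi\in\mathcal D(\R\times[0,2\pi])$ (periodic in $\theta$) one has $\langle\partial_y\psi_n,\partial_\theta\chi\rangle=-\langle\partial_\theta\psi_n,\partial_y\chi\rangle=-\langle\partial_\theta\partial_y\psi_n,\chi\rangle$; passing to the limit, $\langle\partial_y\psi_n,\partial_\theta\chi\rangle\to\langle g,\partial_\theta\chi\rangle$ while the right-hand side tends to $0$ because $\partial_\theta\psi_n\to0$ strongly. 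Hence $\partial_\theta g=0$ in the distributional sense, so $g=g(y)$.

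For the second point, that $g$ vanishes on $]-\infty,0]$, I would use \eqref{negcomp}, namely $\psi_n\to0$ strongly in $L^2(]-\infty,0]\times[0,2\pi])$. Combined with $\partial_y\psi_n\rightharpoonup g$ in $L^2$, a standard closedness argument for the (weak) derivative on the open strip $]-\infty,0[\times[0,2\pi]$ gives that the limit of $\partial_y\psi_n$ restricted to that strip must be the distributional $y$-derivative of the $L^2$-limit of $\psi_n$ there, which is $0$. Concretely, for $\chi\in\mathcal D(]-\infty,0[\times[0,2\pi])$ one writes $\langle\partial_y\psi_n,\chi\rangle=-\langle\psi_n,\partial_y\chi\rangle$ and lets $n\to\infty$: the left side tends to $\langle g,\chi\rangle$, the right side to $0$. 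Thus $g=0$ a.e.\ on $]-\infty,0[$, hence on $]-\infty,0]$.

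For the identity \eqref{imprel}, fix $y_1<y_2$. For each $\theta$, $\psi_n(y_2,\theta)-\psi_n(y_1,\theta)=\int_{y_1}^{y_2}\partial_y\psi_n(y,\theta)\,dy$, so averaging over $\theta$ gives
\[
\frac1{2\pi}\int_0^{2\pi}\bigl(\psi_n(y_2,\theta)-\psi_n(y_1,\theta)\bigr)\,d\theta=\frac1{2\pi}\int_0^{2\pi}\int_{y_1}^{y_2}\partial_y\psi_n(y,\theta)\,dy\,d\theta=\Bigl\langle\partial_y\psi_n,\tfrac1{2\pi}\mathbf 1_{[y_1,y_2]\times[0,2\pi]}\Bigr\rangle.
\]
Since $\tfrac1{2\pi}\mathbf 1_{[y_1,y_2]\times[0,2\pi]}\in L^2(\R\times[0,2\pi])$ and $\partial_y\psi_n\rightharpoonup g$ weakly in $L^2$, the right-hand side converges to $\langle g,\tfrac1{2\pi}\mathbf 1_{[y_1,y_2]\times[0,2\pi]}\rangle=\frac1{2\pi}\int_0^{2\pi}\int_{y_1}^{y_2}g(y)\,dy\,d\theta=\int_{y_1}^{y_2}g(y)\,dy$, using that $g$ is independent of $\theta$. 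This proves \eqref{imprel}; the case $y_2<y_1$ follows by antisymmetry and $y_1=y_2$ is trivial.

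The main obstacle, modest as it is, is the rigorous justification of passing from the distributional identity $\partial_\theta g=0$ and the one-sided vanishing to pointwise/a.e.\ statements, i.e.\ handling the periodicity in $\theta$ and the unboundedness of the strip in $y$ cleanly when choosing test functions; the weak-convergence pairings themselves are routine once the correct $L^2$ test functions (indicators of rectangles, and $\partial_\theta$ of smooth periodic functions) are identified. One should also note that \eqref{imprel} already encodes, in averaged form, that $g$ is the $y$-derivative of the (not yet identified) limiting profile, which is what the subsequent extraction of $\psi^{(1)}$ will exploit.
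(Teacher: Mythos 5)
Your proposal is correct and follows essentially the same route as the paper's proof: the commutation of mixed distributional derivatives together with the strong convergence $\partial_\theta\psi_n\to 0$ gives $\partial_\theta g=0$, the vanishing of $g$ on $]-\infty,0]$ comes from \eqref{negcomp}, and \eqref{imprel} is exactly the weak $L^2$ pairing of $\partial_y\psi_n$ with the indicator function of $[y_1,y_2]$ (constant in $\theta$). The only slip is a harmless sign in your integration by parts, which should read $\langle\partial_y\psi_n,\partial_\theta\chi\rangle=\langle\partial_\theta\psi_n,\partial_y\chi\rangle$; it does not affect the limit or the conclusion.
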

\begin{proof}[Proof of Lemma \ref{summerize}]
Let us go to the proof of the fact that  the function~$g$ only depends on the variable~$y$. First,
 by \eqref{R4},  $\partial_\theta\psi_n  \to  0$  in ~$L^2 (y,\theta)$ and hence   we have~$\partial_y (\partial_\theta\psi_n)  \to  0 \, \, \mbox{in} \, \, {\mathcal D}'$.  Second,  under  \eqref{R5}, $\partial_y \psi_n  \to  g  \, \, \mbox{in}\, \, {\mathcal D}'$ which implies that ~$\partial_\theta (\partial_y\psi_n) \to  \partial_\theta g  \, \, \mbox{in} \, \, {\mathcal D}'$. Therefore, we deduce that $\partial_\theta g=0$.  The fact that $g \equiv 0$ on $] - \infty, 0 ] $ derives from  \eqref{negcomp}.\\

Now, taking advantage of the fact that ~$\partial_y \psi_n  \rightharpoonup  g $ in ~$L^2$ as ~$n$ tends to infinity, we get for any ~$y_1 \leq y_2$,
$$ \langle \partial_y\psi_n, {\mathbf 1}_{[ y_1, y_2]}\rangle \to 2 \pi \int^{y_2}_{y_1} g(y)\,dy.$$
But,
\begin{eqnarray*}
\langle \partial_y\psi_n, {\mathbf 1}_{[ y_1, y_2]}\rangle & = & \int^{2\pi}_{0}  \int^{y_2}_{y_1}  \partial_y \psi_n (y,\theta)\,dy\,d\theta \\ & = &  \int^{2\pi}_{0}  (\psi_n (y_2,\theta)-\psi_n (y_1,\theta))\,d\theta.
\end{eqnarray*}
This leads to \eqref{imprel}.
\end{proof}
Before extracting the first profile,  let us begin by establish
 the following lemma:
\begin{lem}
\label{nonradcont}
The function $F_n(y)= \frac{1}{2\pi} \int^{2\pi}_{0} \psi_n (y,\theta)\, d\theta $ is continuous on ~$\R$.
\end{lem}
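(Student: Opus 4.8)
The plan is to show that $F_n$ is the restriction to $\R$ of a single smooth function of $y$, obtained by averaging $v_n$ over circles, and that this function inherits absolute continuity (indeed local Lipschitz/$H^1_{\mathrm{loc}}$ regularity) from the fact that $u_n\in H^1(\R^2)$. First I would write $F_n$ explicitly in terms of $u_n$: since $v_n(s,\theta)=(\tau_{-x_n^{(1)}}u_n)(\mathrm e^{-s}\cos\theta,\mathrm e^{-s}\sin\theta)$, we have, after the change of variable $y=s/\alpha_n^{(1)}$,
\[
F_n(y)=\sqrt{\frac{2\pi}{\alpha_n^{(1)}}}\;\frac{1}{2\pi}\int_0^{2\pi}(\tau_{-x_n^{(1)}}u_n)\big(\mathrm e^{-\alpha_n^{(1)}y}\cos\theta,\mathrm e^{-\alpha_n^{(1)}y}\sin\theta\big)\,d\theta,
\]
so $F_n(y)=\sqrt{2\pi/\alpha_n^{(1)}}\;\Xi_n(\mathrm e^{-\alpha_n^{(1)}y})$, where $\Xi_n(r):=\frac1{2\pi}\int_0^{2\pi}(\tau_{-x_n^{(1)}}u_n)(r\cos\theta,r\sin\theta)\,d\theta$ is the spherical mean of the (translated) function $u_n$ on the circle of radius $r$. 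Since $r\mapsto \mathrm e^{-\alpha_n^{(1)}y}$ is smooth on $\R$, it suffices to prove that $\Xi_n$ is continuous on $]0,\infty[$.

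The continuity of the spherical mean $\Xi_n$ on $]0,\infty[$ for an $H^1(\R^2)$ function is a standard fact: for a test function $\varphi\in\mathcal D(\R^2)$ the map $r\mapsto \frac1{2\pi}\int_0^{2\pi}\varphi(r\cos\theta,r\sin\theta)\,d\theta$ is clearly smooth, and one controls the $r$-modulus of continuity of spherical means in terms of the $H^1$ norm. Concretely, for $0<r_1<r_2$ in a compact subinterval $[a,b]\subset]0,\infty[$ one writes, for $u\in\mathcal D(\R^2)$,
\[
\frac1{2\pi}\int_0^{2\pi}\big(u(r_2\cos\theta,r_2\sin\theta)-u(r_1\cos\theta,r_1\sin\theta)\big)\,d\theta
=\frac1{2\pi}\int_0^{2\pi}\!\!\int_{r_1}^{r_2}\partial_\rho\big(u(\rho\cos\theta,\rho\sin\theta)\big)\,d\rho\,d\theta,
\]
and Cauchy--Schwarz in $(\rho,\theta)$ together with $\int_{r_1}^{r_2}\!\int_0^{2\pi}|\nabla u(\rho\omega)|^2\rho\,d\theta\,d\rho\le \|\nabla u\|_{L^2(\R^2)}^2$ gives
\[
\Big|\Xi_u(r_2)-\Xi_u(r_1)\Big|\le \frac{1}{2\pi}\sqrt{2\pi\log(r_2/r_1)}\;\frac{1}{\sqrt a}\,\|\nabla u\|_{L^2(\R^2)},
\]
since $\rho^{-1}\le a^{-1}$ on $[a,b]$. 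By density of $\mathcal D(\R^2)$ in $H^1(\R^2)$ this estimate passes to $u=\tau_{-x_n^{(1)}}u_n$ (translation is an isometry of $H^1$), so $\Xi_n$ is (locally) uniformly continuous — indeed log-Hölder continuous — on $]0,\infty[$. Composing with the smooth change of variable, $F_n$ is continuous on $\R$, which is the assertion.

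I do not expect any genuine obstacle here; the only point requiring a little care is the justification of the Fubini/Cauchy--Schwarz manipulation and the density argument, i.e. that the pointwise circular average is well defined and continuous for a generic $H^1$ function (one should first establish the displayed modulus-of-continuity bound for smooth $u$ and then define $\Xi_u$ for $u\in H^1$ as the continuous extension, checking it agrees a.e. with the naive circular average via Fubini). Everything else — the explicit rewriting of $F_n$ and the composition with $r=\mathrm e^{-\alpha_n^{(1)}y}$ — is routine.
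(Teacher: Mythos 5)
Your argument is correct and is essentially the paper's proof: the paper also obtains the modulus of continuity by writing the increment as an integral of $\partial_y\psi_n$ and applying Cauchy--Schwarz, controlled by $\|\nabla u_n\|_{L^2}$, only it works directly in the $(y,\theta)$ coordinates instead of first estimating the circular means $\Xi_n(r)$ and composing with $r=\mathrm e^{-\alpha_n^{(1)}y}$, which after your exponential change of variables is the same computation. Note only that the paper's version yields the bound $|F_n(y_1)-F_n(y_2)|\leq C|y_1-y_2|^{1/2}$ \emph{uniformly in} $n$ (used later for equicontinuity), which your cruder constant involving $1/\sqrt a$ would obscure, though it is irrelevant for the continuity statement itself.
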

\begin{proof}[Proof of Lemma \ref{nonradcont}]
By Cauchy-Schwarz's inequality, we get for $y_1$ and $y_2$ in $\R$:
$$ \big|F_n (y_1)- F_n (y_2)\big|^2 \leq \frac{1}{2\pi} \int^{2\pi}_{0} \big|\psi_n (y_1,\theta)- \psi_n (y_2,\theta)\big|^2\, d \theta. $$
But,
\begin{eqnarray*} \Big|\psi_n (y_1,\theta)- \psi_n (y_2,\theta)\Big| &=& \Big| \int^{y_2}_{y_1} \partial_y \psi_n (\tau,\theta)\, d \tau\Big|\\ &\leq &  \sqrt{|y_1-y_2|} \Big(\int^{y_2}_{y_1} \big| \partial_y \psi_n (\tau,\theta)\big|^2\, d \tau\Big)^{\frac 1 2} \end{eqnarray*}
Therefore
$$\big|F_n (y_1)- F_n (y_2)\big|^2 \leq \frac{1}{2\pi} |y_1-y_2| \int^{2\pi}_{0} \int^{y_2}_{y_1} \big| \partial_y \psi_n (\tau,\theta)\big|^2\, d \tau \,d \theta \leq C |y_1-y_2|,$$
which implies that the sequence ~$(F_n)$ is uniformly in ~$n$ in the H\"older space ~$C^\frac{1}{2}(\R)$.
\end{proof}

Let us  now introduce the  function
\begin{equation}
\label{nonradprofile}
\psi^{(1)} (y)= \int^{y}_{0} g(\tau) \, d \tau.
\end{equation}
Our goal  in what follows is to prove the following  proposition:
\begin{prop}\label{propconvprofile}
The function $\psi^{(1)} $ defined by  \eqref{nonradprofile} belongs to the set of profiles
${\cP}$. Besides   for any $y \in \R$, we have
\begin{equation}
\label{nonradcvprofile} \frac{1}{2\pi} \int^{2\pi}_{0} \psi_n (y,\theta)\, d\theta \to \psi^{(1)} (y), \end{equation}
as~$n$ tends to infinity and there exists an absolute constant $C$ so that
\begin{equation}
\label{nonradcdprofile} \|{\psi^{(1)}}'\|_{L^2}  \geq  C A_0\,.\end{equation}
\end{prop}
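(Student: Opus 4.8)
The plan is to establish the three assertions of Proposition \ref{propconvprofile} in the order: first the pointwise convergence \eqref{nonradcvprofile}, then the membership $\psi^{(1)}\in{\cP}$, and finally the lower bound \eqref{nonradcdprofile}, since the last is where the capacity-based extraction of the core (Lemma \ref{noradheart}) really enters.

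For \eqref{nonradcvprofile}, I would start from \eqref{imprel} of Lemma \ref{summerize}, which already gives $\frac{1}{2\pi}\int_0^{2\pi}\big(\psi_n(y,\theta)-\psi_n(0,\theta)\big)\,d\theta\to\int_0^y g(\tau)\,d\tau=\psi^{(1)}(y)$. So it remains to show that $F_n(0)=\frac{1}{2\pi}\int_0^{2\pi}\psi_n(0,\theta)\,d\theta\to 0$. This should follow from \eqref{negcomp}: since $\psi_n\to 0$ in $L^2(]-\infty,0]\times[0,2\pi])$, a mean-value/Hölder argument combined with the uniform $C^{1/2}$ bound on $F_n$ from Lemma \ref{nonradcont} forces $F_n(0)\to 0$ — indeed $\int_{-1}^0|F_n(y)|\,dy\to 0$ by \eqref{negcomp}, and the uniform modulus of continuity of $F_n$ near $0$ then upgrades this to $F_n(0)\to 0$. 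Combining, $F_n(y)\to\psi^{(1)}(y)$ for every $y$. The fact that $\psi^{(1)}$ vanishes on $]-\infty,0]$ is immediate from $g\equiv 0$ there (Lemma \ref{summerize}), and $\psi^{(1)}\in L^2(\R,{\rm e}^{-2s}ds)$ follows since ${\psi^{(1)}}'=g\in L^2(\R)$ so $\psi^{(1)}$ grows at most like $\sqrt{|s|}$; together these give $\psi^{(1)}\in{\cP}$.

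The heart of the matter is \eqref{nonradcdprofile}. Here I would exploit the core $x_n^{(1)}$ produced by Lemma \ref{noradheart}, which satisfies \eqref{R0}: a fixed proportion $\delta_0 A_0^2$ of the mass of $E_n$ lies in the ball $B(x_n^{(1)},{\rm e}^{-(1-2\varepsilon_0)\alpha_n^{(1)}})$. Translating the picture through $\tau_{-x_n^{(1)}}$ and passing to $(y,\theta)$ coordinates via \eqref{R}, this ball becomes the strip $\{y\geq 1-2\varepsilon_0\}$, and the lower bound $|u_n|\geq\sqrt{2\alpha_n^{(1)}}\,(1-\varepsilon_0/10)A_0$ on $E_n$ translates to $|\psi_n(y,\theta)|\geq\sqrt{4\pi}\,(1-\varepsilon_0/10)A_0$ on a subset of that strip whose $(y,\theta)$-measure is bounded below — precisely because $|E_n\cap B|/|E_n|\geq\delta_0A_0^2$ and $|E_n|\gtrsim {\rm e}^{-2\alpha_n^{(1)}(1-\eta)}$ (Remark \ref{estmeasure}), while the $(y,\theta)$-measure of the whole strip $\{1-2\varepsilon_0\leq y\leq 1\}\times[0,2\pi]$ is $2\pi\cdot 2\varepsilon_0$, a fixed positive number. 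Hence there is some $\bar y_n$ in $[1-2\varepsilon_0,1]$ on which the angular average $|F_n(\bar y_n)|$ is comparable to $A_0$ — more carefully, the set of $y$ at which $\frac{1}{2\pi}|\{\theta:|\psi_n(y,\theta)|\geq\sqrt{4\pi}(1-\varepsilon_0/10)A_0\}|$ is bounded below has positive $y$-measure, and at such $y$ one gets $|F_n(y)|\gtrsim A_0$ (using that on the complement $\psi_n$ contributes little, the construction of $E_n$ having ruled out large values elsewhere via \eqref{high}/\eqref{est1}, i.e.\ $|u_n|\leq\sqrt{2\alpha_n^{(1)}}(1-\varepsilon_0/2)A_0$ outside $H_n$). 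Passing to the limit using \eqref{nonradcvprofile} yields a point $y_*\in[1-2\varepsilon_0,1]$ with $|\psi^{(1)}(y_*)|\gtrsim A_0$. Since $\psi^{(1)}(y_*)=\int_0^{y_*}{\psi^{(1)}}'(\tau)\,d\tau$ and $y_*\leq 1$, Cauchy--Schwarz gives
\[
C A_0\leq|\psi^{(1)}(y_*)|\leq\sqrt{y_*}\,\|{\psi^{(1)}}'\|_{L^2}\leq\|{\psi^{(1)}}'\|_{L^2},
\]
which is \eqref{nonradcdprofile}.

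The main obstacle I anticipate is the bookkeeping in the step just described: turning the \emph{Lebesgue-measure} lower bound on $E_n\cap B(x_n^{(1)},\cdot)$ in $\R^2$ into a \emph{positive $(y,\theta)$-measure} lower bound on the set where $|\psi_n|$ is large, and then ensuring that this large-set information survives averaging in $\theta$ to give $|F_n(y)|\gtrsim A_0$ on a set of $y$ of positive measure. The subtlety is that a function can have large absolute value on a set of positive measure while having small angular average, so one must use the one-sided/sign structure — here provided by the fact that after Schwarz symmetrization and the truncation of Proposition \ref{truncate} we may as well assume $\psi_n$ has a definite sign on the relevant region (the profiles $\varphi^{(j)}$ are nonnegative), together with the uniform upper bound $|u_n|\leq M\sqrt{2\alpha_n^{(1)}/(2\pi)}$ from the truncation — so that large positive values cannot be cancelled by comparably large negative ones. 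Once monotonicity/sign is in hand, the averaging is routine, and the rest is Cauchy--Schwarz as above.
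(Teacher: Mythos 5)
Your treatment of \eqref{nonradcvprofile} and of the membership $\psi^{(1)}\in{\cP}$ is correct and essentially the paper's argument (the paper shows $F_n(y)\to 0$ for all $y\le 0$ via \eqref{negcomp} plus the uniform H\"older bound of Lemma \ref{nonradcont}, exactly in your spirit). The gap is in the key estimate \eqref{nonradcdprofile}, at the step where you claim that \eqref{R0} ``translates to $|\psi_n(y,\theta)|\geq\sqrt{4\pi}(1-\varepsilon_0/10)A_0$ on a subset of the strip whose $(y,\theta)$-measure is bounded below''. This does not follow, because the Jacobian of the map $x=x_n^{(1)}+({\rm e}^{-\alpha_n y}\cos\theta,{\rm e}^{-\alpha_n y}\sin\theta)$ is $\alpha_n{\rm e}^{-2\alpha_n y}$, which varies by a factor ${\rm e}^{c\alpha_n}$ across the strip $1-2\varepsilon_0\le y\le 1$. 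A set of Lebesgue measure $\gtrsim {\rm e}^{-2\alpha_n(1-\eta)}$ inside $B(x_n^{(1)},{\rm e}^{-(1-2\varepsilon_0)\alpha_n})$ can be entirely contained in a tiny ball (or thin annulus) adjacent to the outer boundary of ${\mathbf B}$, in which case its $dy\,d\theta$-measure is $O(\alpha_n^{-1}{\rm e}^{-c\varepsilon_0\alpha_n})$ and, worse, its angular section $\{\theta:|\psi_n(y,\theta)|\ \mbox{large}\}$ has measure $o(1)$ for \emph{every} $y$; since $|\psi_n|\le M$ after truncation, the angular average $F_n(y)$ then picks up only an $o(1)$ contribution from $E_n$, and no $y_*$ with $|\psi^{(1)}(y_*)|\gtrsim A_0$ is produced. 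This is precisely why the paper does not deduce \eqref{nonradcdprofile} from \eqref{R0} alone: it runs a \emph{second} capacity argument (Proposition \ref{mainextraccore}) showing that otherwise the energy in ${\mathbf B}$ would exceed $CA_0^2/\varepsilon_0$, and thereby obtains \eqref{secondargcap}, i.e.\ that $u_n^+\ge\tfrac12\sqrt{2\alpha_n^{(1)}}A_0$ on at least \emph{half} of ${\mathbf B}$; only this half-ball statement yields a $y_0$ near $1$ at which $\psi_n^+(y_0,\theta)\ge\sqrt{\pi}A_0$ on an angular interval of length $\ge\pi$.

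The second weak point is your resolution of the cancellation issue. Schwarz symmetrization makes $u_n^*$ (and the profiles $\varphi^{(j)}$) nonnegative, but it says nothing about the sign of $u_n$ itself, and the odd truncation $\Theta_a^M$ preserves the sign of $u_n$; so you may \emph{not} ``assume $\psi_n$ has a definite sign on the relevant region''. Negative values of $u_n$ of size comparable to $\sqrt{\alpha_n}$ on a large angular set are not excluded by \eqref{high}, \eqref{est1} or the $L^\infty$ bound, and they could cancel the positive contribution in the angular average. The paper handles this with a third use of Proposition \ref{mainextraccore}: writing $u_n=u_n^+-u_n^-$, the set where $u_n^->0$ inside ${\mathbf B}$ has relative measure $\lesssim {\rm e}^{-\pi\alpha_n^{(1)}A_0^2}$ (otherwise the oscillation from $\tfrac12\sqrt{2\alpha_n^{(1)}}A_0$ down to $0$ would cost too much energy), and then the $L^\infty$ bound $M$ together with the H\"older continuity of $\psi^-$ shows $\psi^-(y_0)\le A_0/(10\pi)+C\varepsilon_0^{1/2}$, so that $\psi^{(1)}(y_0)=\psi^+(y_0)-\psi^-(y_0)\gtrsim A_0$. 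In short, both the lower bound on the positive part on a fixed fraction of ${\mathbf B}$ and the smallness of the negative part require capacity estimates that your proposal omits; without them the passage from \eqref{R0} to \eqref{nonrav} is not valid.
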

\begin{rem}
Let us point out that by virtue of the convergence properties \eqref{R4} and \eqref{R5}, the sequence ~$(\frac{1}{2\pi}  \psi_n (y,\theta))$ converges weakly  to $\psi^{(1)}(y)$
 in $L^2 (\R \times [ 0, 2\pi])$ as $n$ tends to infinity.
In particular we have for any $f \in {\mathcal D}(\R)$
$$ \frac{1}{2\pi} \int_{\R}\int^{2\pi}_{0} \psi_n (y,\theta) f(y)\, dy \, d\theta \stackrel{n\to\infty}\longrightarrow \int_{\R}\psi^{(1)} (y) f(y)\, dy.$$
In other words
\begin{equation}
\label{nonravweak}
\frac{1}{2\pi} \int_{\R}\int^{2\pi}_{0} \sqrt{\frac{2\pi}{\alpha_n^{(1)}}}\;\Big(\tau_{- x_n^{(1)}}u_n\Big) ({\rm e}^{-\alpha_n^{(1)} y} \cos \theta , {\rm e}^{-\alpha_n^{(1)} y} \sin \theta) f(y)\, dy \, d\theta \stackrel{n\to\infty}\longrightarrow \int_{\R}\psi^{(1)} (y) f(y)\, dy.\end{equation}
\end{rem}
\begin{proof}[Proof of Proposition \ref{propconvprofile}]
  Clearly $\psi^{(1)}\in {\mathcal C}(\R)$ and
${\psi^{(1)}}'=g\in L^2(\R)$. Moreover, since
$$
\Big|\psi^{(1)}(y)\Big|=\Big|\int_0^y\,g(\tau)\,d\tau\Big|\leq \sqrt{y}
\,\|g\|_{L^2(\R)},
$$
we get $\psi^{(1)}\in L^2(\R^+, {\rm e}^{-2y}\,dy)$. Now, using the fact that  $g \equiv 0$ on $ \R^- $, we get the assertion \eqref{nonradcvprofile}. Indeed,  by  virtue of \eqref{imprel}, it suffices to prove that  for any ~$y \leq 0$,
$$ F_n(y)= \frac{1}{2\pi} \int^{2\pi}_{0} \psi_n (y,\theta)\, d \theta \to 0,\quad n\to\infty\,. $$

Applying Cauchy-Schwarz's inequality and integrating with respect to $y$, we obtain
$$
\int_{-\infty}^0\,\Big|F_n(y)\Big|^2\,dy\leq \frac{1}{2\pi}\, \| \psi_n\|_{L^2 (] - \infty, 0 ] \times [ 0, 2\pi])}^2,
$$
which implies by virtue of \eqref{negcomp} that the sequence $(F_n)$ converges strongly to $0$ in $L^2 (] - \infty, 0 ])$. Therefore, up to a subsequence  extraction
$$
 F_n(y) \to 0\,\, \mbox{almost every where in}\,\,]-\infty, 0 ]\,.
 $$
Taking advantage of the continuity of ~$F_n$, we deduce that for any $y \leq 0$, $F_n(y) \to 0$  which achieves the proof of claim \eqref{nonradcvprofile}. \\

To end the proof of Proposition \ref{propconvprofile}, it remains to check \eqref{nonradcdprofile} which is the  key estimate  to iterate the process of extraction of elementary concentrations.  By virtue of  \eqref{nonradcvprofile} and Cauchy-Schwarz's inequality
$$
\Big|\psi^{(1)}(y)\Big|=\Big|\int_0^y\,{\psi^{(1)}}'(\tau)\,d\tau\Big|\leq \sqrt{y}
\,\|{\psi^{(1)}}'\|_{L^2}.
$$
So to establish the key estimate \eqref{nonradcdprofile}, it suffices in light of \eqref{nonradcvprofile} to prove the existence of $y_0$ close to $1 $  such that for $n $  big enough
\begin{equation}
\label{nonrav} \Big|\, \frac {1}{2\pi}\int^{2\pi}_{0} \psi_n (y_0,\theta)\, d\theta \,\Big| \geq  C A_0\,,\end{equation} where $C$ is an absolute constant.
For this purpose, we will again use  capacity arguments.
First, we define $$E_n^\pm  :=\Big\{x\in \R^2;\,  \pm u_n(x) \,\geq \sqrt{ 2 \alpha_n^{(1)}}\;\Big(1- \frac {\varepsilon_0} {10}\Big) \, A_0 \Big\}.   $$ Hence, $E_n = E_n^+ \cup E_n^- $. Modulo replacing  $u_n$ by $-u_n$, we can assume that \eqref{noradhearteq} yields
\begin{equation}\label{E+B}
|E^+_n \cap B (x_n^{(1)}, {\rm e}^{- (1- 2\, \varepsilon_0) \,\alpha_n^{(1)}} )| \geq  \frac{\delta_0}2  A^2_0 |E_n| \geq
 \frac{\delta_0}2  A^2_0  {\rm e}^{- 2  \,\alpha_n^{(1)}} . \end{equation}
 We also write  $u_n = u_n^+ - u_n^-$  and $\psi_n = \psi_n^+ - \psi_n^-$  where $ u_n^+,  u_n^-, \psi_n^+,  \psi_n^-  \geq 0 $.
 Introducing the set $\wt { E_n } \supset E_n^+$ defined by
$$ \wt { E_n } :=\Big\{x\in \R^2;\; u_n^+(x) \geq \frac {\sqrt{ 2 \alpha_n^{(1)}} A_0}{2}\,\Big\}, $$
we infer that we can choose $\varepsilon_0$ so that for $n$ big enough
\begin{equation}
\label{secondargcap}
\Big|\wt { E_n } \cap B (x_n^{(1)}, {\rm e}^{- (1- 2\, \varepsilon_0) \,\alpha_n^{(1)}} ) \Big| \geq \frac 1 2 \, \Big| B (x_n^{(1)}, {\rm e}^{- (1- 2\, \varepsilon_0) \,\alpha_n^{(1)}} ) \Big|\,.
\end{equation}
Otherwise taking into  account   the fact that the values of the function $u_n^+$ on  the ball $B (x_n^{(1)}, {\rm e}^{- (1- 2\, \varepsilon_0) \,\alpha_n^{(1)}} )$, that we shall designate in what follows by ${\mathbf B}$ to avoid heaviness,  varies at least from  values larger than  $\sqrt{ 2 \alpha_n^{(1)}}\;\Big(1- \frac {\varepsilon_0} {10}\Big) \, A_0$ on $E_n^+  \cap {\mathbf B}$ to  values smaller than
 $ \frac 1 2 \sqrt{2 \alpha_n^{(1)}}\, A_0$ on a subset of ${\mathbf B}$ of Lebesgue measure greater than ~$\frac { |{\mathbf B} |}{2}$, it comes from Proposition \ref{mainextraccore} that
$$
\|\nabla u_n\|_{L^2({\mathbf B})}^2  \geq  \| \nabla u_n^+\|_{L^2({\mathbf B})}^2  \geq 2 \pi \Big(\Big(\frac {1} {2} - \frac {\varepsilon_0} {10}\Big)\sqrt{ 2 \alpha_n^{(1)}}\;\, A_0\Big)^2 \frac 1 {\log \frac {{\rm e}^{-\alpha_n^{(1)}(1-2 \varepsilon_0)}}{\sqrt{|E_n^+  \cap {\mathbf B}|}}}\,. $$
Therefore, we get by virtue of Lemma \ref{noradheart} and \eqref{E+B} that
$$ \| \nabla u_n\|_{L^2({\mathbf B})}^2    \geq   \frac {C A^2_0}{ \varepsilon_0 }\,,
$$
for $n$ big enough which gives a contradiction.  Hence \eqref{secondargcap} holds.\\

Since the measure of the
 ball $B (x_n^{(1)}, {\rm e}^{- (1+ 2\, \varepsilon_0) \,\alpha_n^{(1)}} )$ is much smaller than
the measure of  ${\mathbf B}$,
we deduce  the existence of $y_0$  such that
   $1 - 2\, \varepsilon_0\leq y_0 \leq 1+ 2\, \varepsilon_0$  and~$ \psi_n^+  (y_0,\theta) \geq \sqrt{ \pi }\, A_0 $
for $\theta$ varying over an interval of length at least $\pi$.
Hence, the limit $\psi^+$ of $ \psi_n^+  $ satisfies \begin{equation}\label{condprof1}\psi^+ (y_0) \geq  \frac{ \sqrt{ \pi } }2 \, A_0\,.\end{equation}
Now, we argue in a similar way to control $\psi^- (y_0)  $ from above.
Let  $ \tilde E_n^- $  be the set  where $ u_n^- > 0  $.
The values of $u_n $ on the ball $ \mathbf B$ vary from  values larger than
$\frac {1} {2} \sqrt{ 2 \alpha_n^{(1)}} A_0$ on $E_n^+  \cap {\mathbf B} $ to negative  values
on $ \tilde E_n^-  \cap {\mathbf B} $.
Using Proposition \ref{mainextraccore}, we deduce that
$$
\|\nabla u_n\|_{L^2({\mathbf B})}^2  \geq 2\pi
 \Big(\frac {1} {2} \sqrt{ 2 \alpha_n^{(1)}}\;\, A_0\Big)^2 \frac 1 {\log \frac {\sqrt{|{\mathbf B}|}   }
{\sqrt{| \tilde E_n^- \cap  {\mathbf B}  |}}}\,, $$
which implies that $ | \tilde E_n^- \cap {\mathbf B}  | \lesssim | {\mathbf B}  |  e^{- \pi  \alpha_n^{(1)} A_0^2}   $.\\

 If $\tilde E_n^- \cap {\mathbf B} \subset B (x_n^{(1)}, {\rm e}^{- (1+ 2\, \varepsilon_0) \,\alpha_n^{(1)}} )$, we are done. If not, there exists $y_1$  such that
   $1 - 2\, \varepsilon_0\leq y_1 \leq 1+ 2\, \varepsilon_0$  and
$ \psi_n^- (y_1,\theta)   > 0  $
for $\theta$ varying over an interval of length of measure at most  $\frac{A_0} {10 M}$ and $n$ sufficiently large.
As by construction $\|\psi_n\|_{L^\infty} \leq M$, the limit $\psi^-$ of $ \psi_n^-  $ satisfies
$\psi^- (y_1) \leq  \frac{ 1 }{2\pi} \frac{  A_0  }{10}  $. Since, $\psi^-  $ belongs to the H\"older space $C^{1/2}(\R)$ and
$ |y_0 - y_1 | \leq 2 \varepsilon_0$, we deduce that
\begin{equation}\label{condprof2}\psi^- (y_0) \leq  \frac{ 1 }{2\pi} \frac{ { A_0 } }{10} + C\varepsilon_0^{1/2} \leq \frac{ { A_0 } }{10 \pi }\,,\end{equation} if
$\varepsilon_0 $ was chosen small enough. Finally combining \eqref{condprof1} and \eqref{condprof2}, we infer that
$$\psi (y_0 ) =\psi^+ (y_0)  - \psi^- (y_0)  \geq \frac{ { A_0 } }{2} $$ which achieves the proof of the last point  of Proposition \ref{propconvprofile}.
\end{proof}
%@@@@@@@@@@@@@@@@@@@@@@@@@@@@@@@@@@@@%@@@@@@@@@@@@@@@@@@@@@@@@@@@@@@@@@@@@%@@@@@@@@

\subsection{Iteration}
Our concern now is to iterate the previous process and to prove that
the algorithmic construction converges. We do not assume anymore that $u^n$ has only one sacle.
 Setting
$$ {\rm r}^{(1)}_n (x)= \sqrt{\frac{\alpha^{(1)}_n}{2\pi}}\;\left(\psi_n\Big(\frac{-\log|x -x_n^{(1)}|}{\alpha^{(1)}_n},\theta\Big) - \psi^{(1)}\Big(\frac{-\log |x -x_n^{(1)}|}{\alpha^{(1)}_n}\Big)\right),$$
where $(x_n^{(1)})$ is the sequence of points defined by \eqref{R0} and $$\psi_n (y,\theta)= \sqrt{\frac{2\pi}{\alpha_n^{(1)}}}\; \Big(\tau_{-x_n^{(1)}}u_n\Big)\Big({\rm e}^{-\alpha_n^{(1)} y} \cos \theta , {\rm e}^{-\alpha_n^{(1)} y} \sin \theta \Big)\,.$$
Let us first prove that the sequence~$({\rm r}^{(1)}_n)$ satisfies the hypothesis of Theorem \ref{noradmain}.\\

By definition~$ {\rm r}^{(1)}_n (x)= u_n(x) - g^{(1)}_n(x)$, where ~$g^{(1)}_n(x) = \sqrt{\frac{\alpha^{(1)}_n}{2\pi}}\; \psi^{(1)}(\frac{-\log|x -x_n^{(1)}|}{\alpha^{(1)}_n})$.
Noticing that
$g_n^{(1)} \rightharpoonup 0 \,\mbox{in}\,
H^1$ when  $n$ tends to infinity,   it is clear in view of \eqref{noradmain-assum1} that  $
{\rm r}^{(1)}_n$ converges weakly to $0$ as $n$ tends to infinity. \\

On the other hand, thanks to the invariance of  Lebesgue measure  under translations, we have
$$ \|\nabla u_n\|_{L^2}^2= \frac {1}{2\pi} \int_{\R}\int_0^{2\pi}\,|\partial_y\psi_n (y,\theta)|^2dy d\theta+ \frac {(\alpha_n^{(1)})^2}{2\pi}\int_{\R}\int_0^{2\pi}\,|\partial_\theta\psi_n (y,\theta)|^2dy d\theta$$
and
$$ \|\nabla g^{(1)}_n\|_{L^2}^2= \|{{\psi^{(1)}}}'\|_{L^2}^2.$$
Therefore
$$ \|\nabla {\rm r}^{(1)}_n\|_{L^2}^2= \|\nabla u_n\|_{L^2}^2 + \|{{\psi^{(1)}}}'\|_{L^2}^2 - 2 \left(\frac{1}{2\pi}\int_{\R}\int_0^{2\pi}\partial_y\psi_n (y,\theta) {{\psi^{(1)}}}'(y)\, dy d\theta \right).$$
Taking advantage of the fact that by \eqref{R5}
$$\partial_y \psi_n  \rightharpoonup {\psi^{(1)}}', \quad \mbox{as}\quad  n \to \infty \quad \mbox{in}\quad L^2(y,\theta),$$
we get
\begin{equation}
 \label{nonradnorme1}
 \lim_{n\to\infty}\,\|\nabla\,{\rm r}^{(1)}_n\|_{L^2}^2= \lim_{n\to\infty}\,\|\nabla\,u_n\|_{L^2}^2- \|{{\psi^{(1)}}}' \|_{L^2}^2\,
\end{equation}
which implies that $({\rm r}^{(1)}_n)$ is bounded in
$H^1(\R^2)$. \\

Now  since $\psi^{(1)}_{|]-\infty,0]}=0$, we obtain  for $R\geq
1$
$$
\|{\rm r}^{(1)}_n\|_{_{\cL}(|x -x_n^{(1)}|\geq R)} =\|u_n\|_{_{\cL}(|x -x_n^{(1)}|\geq R)}.$$
Taking into account of the fact that the sequence~$(x_n^{(1)})$ is bounded as it was observed in Remarks \ref{mainrems}, we deduce  that $({\rm r}^{(1)}_n)$ satisfies the hypothesis of
compactness at infinity \eqref{noradmain-assum4} and so~$({\rm r}^{(1)}_n)$ verifies the hypothesis of Theorem \ref{noradmain}. \\

 Let us then define $A_1=\limsup_{n\to\infty}\,\|{\rm r}^{(1)}_n\|_{\cL}$. If $A_1=0$, we stop the process. If not, we apply the above arguments to $ {\rm r}^{(1)}_n$  and then along the same lines as in Subsections \ref{Extractscales}, \ref{redonescale} and \ref{Extractcopro}, there exist a scale $(\alpha^{(2)}_n)$, a core  $(x^{(2)}_n)$ and a profile ~$\psi^{(2)}$ in ${\cP}$ such that
$$
{\rm
r}_n^{(1)}(x)=\sqrt{\frac{\alpha_n^{(2)}}{2\pi}}\;\psi^{(2)}\left(\frac{-\log|x - x^{(2)}_n |}{\alpha_n^{(2)}}\right)+{\rm
r}_n^{(2)}(x),
$$
with  $\|{\psi^{(2)}}'\|_{L^2}\geq C\,A_1$,  $C$ being the absolute constant appearing in
\eqref{nonradcdprofile}. This leads to the following crucial estimate
$$
\limsup_{n\to\infty}\,\|{\rm r}^{(2)}_n\|_{H^1}^2\lesssim
1-A_0^2-A_1^2.$$ Moreover, we claim that $(\alpha_n^{(1)},x_n^{(1)},\psi^{(1)})\perp(\alpha_n^{(2)}, x_n^{(2)},\psi^{(2)})$ in the sense of the terminology introduced in Definition \ref{orthogen}. In fact, if  $\alpha^{(1)}_n\perp \alpha^{(2)}_n$, we are done. If not, by virtue of \eqref{inara}, we can suppose that $\alpha_n^{(1)} = \alpha_n^{(2)} = \alpha_n$ and  our purpose is then to prove that
\begin{equation} \label{ortoproof} - \frac{\log|x_n^{(1)}-  x_n^{(2)}|}{\alpha_n} \longrightarrow a\,\, \mbox{as}\,\, n \to \infty,\,\,\mbox{with}\,\psi^{(1)} \,\, \mbox{or}\,\,\psi^{(2)}  \,\, \mbox{null for}\,\, s <a\,.
\end{equation}
First of all assuming that $- \frac{\log|x_n^{(1)}-  x_n^{(2)}|}{\alpha_n} \to a$, let us  prove  that the profile $\psi^{(2)}$ is null for $s< a$. For this purpose, let us begin by recalling that  in view of \eqref{nonravweak}  we have  for any
$f \in {\mathcal D}(\R)$,
$$\sqrt{\frac{1}{2\pi\alpha_n}} \int_{\R}\int^{2\pi}_{0} {\rm
r}^{(1)}_n \Big(x_n^{(2)} + ({\rm e}^{- \alpha_n \,t} \cos \lambda, {\rm e}^{-\alpha_n t} \sin \lambda)\Big) f(t) dt \, d\lambda \to \int_{\R}\psi^{(2)} (t) f(t)\, dt,$$
as $n$ tends to infinity.  \\

Consequently for  fixed $\varepsilon > 0$  we are reduced to demonstrate that for any function $f \in {\mathcal D}(] -\infty, a - \varepsilon[)$, we have
\begin{equation}\label{convint3}
\lim_{n \to \infty}\sqrt{\frac{1}{2\pi\alpha_n}} \int^{a - \varepsilon}_{-\infty}\int^{2\pi}_{0} {\rm
r}^{(1)}_n \Big(x_n^{(2)} + ({\rm e}^{- \alpha_n \,t} \cos \lambda, {\rm e}^{- \alpha_n \,t} \sin \lambda)\Big) f(t)\, dt \, d\lambda = 0.\end{equation}
To do so, let us perform the change of variables
\begin{equation}\label{change} ({\rm e}^{-\alpha_ns} \cos \theta, {\rm e}^{-\alpha_ns} \sin \theta) = x_n^{(2)}-x_n^{(1)} + ({\rm e}^{-\alpha_n t_n(s,\theta)} \cos \lambda_n(s,\theta), {\rm e}^{-\alpha_n t_n(s,\theta)} \sin \lambda_n(s,\theta)).\end{equation}
Denoting by $J_n(s,\theta)$  the Jacobian of the change of variables \eqref{change}, we claim that
\begin{equation}\label{change2}  t_n(s,\theta) \stackrel{n\to\infty}\longrightarrow s,
\end{equation}
and
\begin{equation}\label{change5}  J_n(s,\theta)\stackrel{n\to\infty}\longrightarrow 1,
\end{equation}
uniformly with respect to $(s,\theta) \in ]-\infty, a- \frac \varepsilon 2]\times [0,2\pi]$.\\

Indeed, firstly we can  observe that
\begin{equation}\label{change6}  {\rm e}^{-\alpha_n t_n(s,\theta)} = {\rm e}^{ - \alpha_n s} | 1 + \Theta_n (s,\theta)| = {\rm e}^{ - \alpha_n s} ( 1 + o(1)), \end{equation}
where $\Theta_n (s,\theta) = {\rm e}^{\alpha_n s} {\rm e}^{-i\theta} z_n$, with $z_n$  the writing in $\C$ of the point $x_n^{(1)}-x_n^{(2)}$.
But by hypothesis, we know that  $- \frac{\log|x_n^{(1)}-  x_n^{(2)}|}{\alpha_n} \stackrel{n\to\infty}\longrightarrow  a$. Then, there exists an integer $N$ such that for all $n > N$,
\begin{equation}\label{change3} {\rm e}^{- \alpha_n (a+\frac \varepsilon 4 )} \leq |x_n^{(1)}-  x_n^{(2)}| \leq {\rm e}^{- \alpha_n (a-\frac \varepsilon 4 )}\end{equation}
which according to \eqref{change} and the fact that $t \in ]-\infty,a- \varepsilon]$ gives rise (for $n > N$) to
$${\rm e}^{- \alpha_n \,t_n(s,\theta)} (1-{\rm e}^{-  \frac {\alpha_n \varepsilon}{ 4} })\leq  {\rm e}^{- \alpha_n \,s} \leq  {\rm e}^{- \alpha_n \,t_n(s,\theta)} (1+{\rm e}^{-  \frac {\alpha_n \varepsilon}{ 4} }).$$
This implies that  $s \leq a - \frac {\varepsilon}{ 2}$ for $n$ big enough, and leads to \eqref{change6}. We deduce that
\begin{equation}\label{change7}  t_n(s,\theta) = s - \frac {\log| 1 + \Theta_n (s,\theta)|}{\alpha_n},
\end{equation}
with $|\Theta_n (s,\theta)| \leq {\rm e}^{- \frac {\alpha_n \varepsilon}{ 4} }$ for $s \leq a - \frac {\varepsilon}{ 2}$ and $n$ sufficiently large, which achieves the proof of \eqref{change2}.
Now, since $\frac {\partial \Theta_n} {\partial s} = \alpha_n \Theta_n$ and $\frac {\partial \Theta_n} {\partial \theta} = -i \Theta_n$, it follows that
$$\Big|\frac {\partial \Theta_n} {\partial s}\Big|\leq \alpha_n {\rm e}^{- \frac {\alpha_n \varepsilon}{ 4} } \quad \mbox{and} \quad \Big|\frac {\partial \Theta_n} {\partial \theta}\Big|\leq  {\rm e}^{- \frac {\alpha_n \varepsilon}{ 4} },$$ which easily implies that
\begin{equation}\label{jac1} \frac {\partial t_n} {\partial s} = 1 + o(1) \quad \mbox{and} \quad \frac {\partial t_n} {\partial \theta} = o(1),\end{equation}
where $|o(1)| \lesssim {\rm e}^{- \frac {\alpha_n \varepsilon}{ 4} }$  for $s \leq a - \frac {\varepsilon}{ 2}$ and $n$ big enough.

Otherwise, taking account of \eqref{change} and  \eqref{change7}, we get
$$ {\rm e}^{i \, \lambda_n(s,\theta)} =   \frac {( 1 + \Theta_n (s,\theta))}{| 1 + \Theta_n (s,\theta)|} \, {\rm e}^{i \, \theta}, $$ which allows by straightforward computations to prove that
\begin{equation}\label{jac2} \frac {\partial \lambda_n} {\partial s} =  o(1) \quad \mbox{and} \quad \frac {\partial \lambda_n} {\partial \theta} = 1+ o(1),\end{equation}
again with $|o(1)| \lesssim {\rm e}^{- \frac {\alpha_n \varepsilon}{ 4} }$  for $s \leq a - \frac {\varepsilon}{ 2}$ and $n$ big enough. Finally, the combination of \eqref{jac1} and \eqref{jac2} ensures claim \eqref{change5}.\\

Now,  making  the change of variables \eqref{change}, the  left hand side of \eqref{convint3} becomes for n big enough
$$ I_n:= \sqrt{\frac{1}{2\pi\alpha_n}} \int^{a - \frac{\varepsilon}{2}}_{-\infty}\int^{2\pi}_{0} {\rm
r}^{(1)}_n \Big(x_n^{(1)} + ({\rm e}^{- \alpha_n \,s} \cos \theta, {\rm e}^{-  \alpha_n \,s} \sin \theta)\Big) f(t_n(s,\theta)) J_n(s,\theta)\, ds \, d\theta.$$
But, by definition
$$  {\rm
r}^{(1)}_n(x) = u_n(x) - \sqrt{\frac{\alpha_n}{2\pi}}\; \psi^{(1)}\left(\frac{-\log|x -x_n^{(1)}|}{\alpha_n}\right).$$
Thus
$$ I_n= \frac{1}{2\pi}\int^{a - \frac{\varepsilon}{2}}_{-\infty}\int^{2\pi}_{0}\Big( \sqrt{\frac{2\pi}{\alpha_n}}u_n \big(x_n^{(1)} + ({\rm e}^{- \alpha_n \,s} \cos \theta, {\rm e}^{-  \alpha_n \,s} \sin \theta)\big)-\psi^{(1)}(s)\Big) f(t_n(s,\theta)) J_n(s,\theta) ds d\theta.$$

Remembering that in view of \eqref{nonravweak}  we have   for any
$f \in {\mathcal D}(\R)$
 $$ \sqrt{\frac{1}{2\pi\alpha_n}}\int_{\R}\int^{2\pi}_{0} u_n\big(x_n^{(1)} + ({\rm e}^{-\alpha_n s} \cos \theta, {\rm e}^{-\alpha_n s} \sin \theta)\big) f(s) ds  d\theta \stackrel{n\to\infty}\longrightarrow \int^\infty_{0}\psi^{(1)}(s) f(s) ds,$$
 it comes under the fact that $t_n(s,\theta) \stackrel{n\to\infty}\longrightarrow s \,\, \mbox{and}\,\, J_n(s,\theta)\stackrel{n\to\infty}\longrightarrow 1$ uniformly with respect to $(s,\theta)$ that
$$ I_n \stackrel{n\to\infty}\longrightarrow \frac{1}{2\pi}\int^{a - \frac{\varepsilon}{2}}_{0}\int^{2\pi}_{0}\Big(\psi^{(1)}(s) - \psi^{(1)}(s)\Big) f(s)\, ds \, d\theta = 0,$$which concludes the proof of the fact that $\psi^{(2)}$ is null for $s< a$. \\

To achieve the proof of \eqref{ortoproof}, it remains to show  that the sequence $$t_n := \frac{\log|x_n^{(1)}-  x_n^{(2)}|}{\alpha_n}$$ is bounded and so, up to a subsequence
extraction, it converges. Indeed, on one hand  by virtue of Remark \ref{mainrems} the sequences of cores $(x_n^{(1)})$ and $(x_n^{(2)})$ are bounded which ensures the existence of a positive constant $M$ such that $t_n \leq \frac{\log M }{\alpha_n}$. On the other hand,  there is $\gamma > 0$ such that
\begin{equation}\label{mincores}|x_n^{(1)}-  x_n^{(2)}| \geq  {\rm e}^{- \gamma \,\alpha_n}, \end{equation} for $n$ big enough. In effect,  by construction $x_n^{(2)}$ is chosen so that  there exist an absolute constant  $C$ and $t_0> 0$ such that
$$ \frac{1}{ 2\pi}\left| \int^{2\pi}_{0} \sqrt{\frac{2\pi}{\alpha_n}}\;\Big(\tau_{- x_n^{(2)}}{\rm
r}_n^{(1)}\Big) ({\rm e}^{-\alpha_n t_0} \cos \lambda , {\rm e}^{-\alpha_n t_0} \sin \lambda)  \, d\lambda \right| \geq  C A_1\,,$$   for $n $  sufficiently large, which can be written   by designating  $x_n^{(1)}- x_n^{(2)}$ by $w_n$ $$ \frac{1}{ 2\pi}\left| \int^{2\pi}_{0} \Big(\psi_n\Big(\frac{-\log|{\rm e}^{-\alpha_n t_0}{\rm e}^{{\rm i}\lambda} - w_n |}{\alpha_n},\lambda\Big) - \psi^{(1)}\Big(\frac{-\log |{\rm e}^{-\alpha_n t_0} {\rm e}^{{\rm i}\lambda} -w_n |}{\alpha_n}\Big)\Big)  \, d\lambda \right|| \geq  C A_1\,,$$ and implies that $| \psi^{(2)}(t_0)| \geq C A_1\,$.
Since $\psi^{(2)}$ is continuous, there exists $\delta > 0$ such that for $t \in [t_0- \delta, t_0 + \delta]$, we have
$$| \psi^{(2)}(t)| \geq \frac{C}{ 2} A_1\,.$$
According to \eqref{nonravweak}, we deduce     that if $f $ is a positive valued function  belonging to $ {\mathcal D}([t_0- \delta, t_0 + \delta])$ and identically equal to one in $[t_0- \frac{\delta}{ 2}, t_0 + \frac{\delta}{ 2}]$,  then  for $n$ large enough
\begin{equation}\label{mincores2}\left| \sqrt{\frac{1}{2\pi\alpha_n}} \int^{t_0+ \delta}_{t_0- \delta}\int^{2\pi}_{0} {\rm
r}^{(1)}_n \Big(x_n^{(2)} + ({\rm e}^{- \alpha_n \,t} \cos \lambda, {\rm e}^{-\alpha_n t} \sin \lambda)\Big) f(t) dt \, d\lambda \right| \geq  \frac{C \, \delta}{ 2} A_1\,.\end{equation}
 Now if claim \eqref{mincores} does not hold,    then ${\rm e}^{-\alpha_n (t_0 - 2\delta)} w_n \to 0$. This yields along the same lines as above  making use of  the change of variables   \eqref{change} to
$$ \sqrt{\frac{1}{2\pi\alpha_n}} \int^{t_0+ \delta}_{t_0- \delta}\int^{2\pi}_{0} {\rm
r}^{(1)}_n \Big(x_n^{(2)} + ({\rm e}^{- \alpha_n \,t} \cos \lambda, {\rm e}^{-\alpha_n t} \sin \lambda)\Big) f(t) dt \, d\lambda \to 0\,,$$
which contradicts \eqref{mincores2}.
Thus, there exists an integer $N_1$ such that for all $n \geq N_1$
\begin{equation} \label{ortoestcor}  - \gamma \leq  t_n  \leq \frac{\log M }{\alpha_n}\end{equation}
and then if $\ell$ denotes the limit of the sequence $(t_n)$ (up to a subsequence
extraction), we have   necessary $\ell = - a $ with $0 \leq a \leq \gamma $. This ends  the proof of the orthogonality property \eqref{ortoproof}.\\

Finally, by iterating the process we get at step $\ell$
$$
u_n(x)=\Sum_{j=1}^{\ell}\,\sqrt{\frac{\alpha_n^{(j)}}{2\pi}}\;\psi^{(j)}\left(\frac{-\log|x -x_n^{(j)}|}{\alpha_n^{(j)}}\right)+{\rm
r}_n^{(\ell)}(x),
$$
with
$$
\limsup_{n\to\infty}\,\|{\rm r}^{(\ell)}_n\|_{H^1}^2\lesssim
1-A_0^2-A_1^2-\cdots -A_{\ell-1}^2\,.
$$
Therefore $A_\ell\to 0$ as $\ell\to\infty$ which achieves the proof of the asymptotic decomposition \eqref{noraddecomp}.\\

%@@@@@@@@@@@@@@@@@@@@@@@@@@@@@@@@@@@@%@@@@@@@@@@@@@@@@@@@@@@@@@@@@@@@@@@@@%@@@@@@@@
%@@@@@@@@@@@@@@@@@@@@@@@@@@@@@@@@@@@@%@@@@@@@@@@@@@@@@@@@@@@@@@@@@@@@@@@@@%@@@@@@@@

\subsection{Proof of the orthogonality equality}
To end the proof of Theorem \ref{noradmain}, it remains to establish the orthogonality equality \eqref{ortogonal2}. Since in an abstract Hilbert space $H$, we have
$$
\|\sum_{j=1}^\ell h_j\|_H^2=\sum_{j=1}^\ell\|h_j\|_H^2+\sum_{j\neq k}\left(h_j,h_k\right)_H,
$$
we shall restrict ourselves to two elementary concentrations, and prove the following result.
\begin{prop}
\label{stabi}
Let us consider
\beqn
f_n(x)&=&\sqrt{\frac{\alpha_n}{2\pi}}\,\psi\left(\frac{-\log|x-x_n|}{\alpha_n}\right) \quad \mbox{and}\\
g_n(x)&=&\sqrt{\frac{\beta_n}{2\pi}}\,\varphi\left(\frac{-\log|x-y_n|}{\beta_n}\right),
\eeqn
where ${\underline\alpha}$, ${\underline\beta}$ are two scales, ${\underline x}$, ${\underline y}$ are two cores and $\varphi$, $\psi$ are two profiles such that $({\underline\alpha}, {\underline x}, \psi)\perp({\underline\beta}, {\underline y}, \varphi)$ in the sense of Definition \ref{orthogen}.
Then
\bq
\label{stab}
\|\nabla f_n+\nabla g_n\|_{L^2}^2=\|\nabla f_n\|_{L^2}^2+\|\nabla g_n\|_{L^2}^2+\circ(1),\;\; \mbox{as} \;\; n\to\infty.
\eq
\end{prop}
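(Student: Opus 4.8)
The plan is to reduce everything to showing that the single cross term $\displaystyle\int_{\R^2}\nabla f_n\cdot\nabla g_n\,dx\to 0$; the abstract Hilbert-space identity recalled just above then yields \eqref{stab}, and in fact \eqref{ortogonal2}, for any finite sum of elementary concentrations. First I would record the explicit form
$$\nabla f_n(x)=-\frac{1}{\sqrt{2\pi\alpha_n}}\,\psi'\!\left(\frac{-\log|x-x_n|}{\alpha_n}\right)\frac{x-x_n}{|x-x_n|^2},$$
and similarly for $g_n$; together with the change of variables $x-x_n={\rm e}^{-\alpha_n s}(\cos\theta,\sin\theta)$, whose Jacobian is $\alpha_n{\rm e}^{-2\alpha_n s}\,ds\,d\theta$, this also re-proves $\|\nabla f_n\|_{L^2}^2=\|\psi'\|_{L^2}^2$. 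Since the cross term is bilinear and $\|\nabla f_n\|_{L^2}=\|\psi'\|_{L^2}$, $\|\nabla g_n\|_{L^2}=\|\varphi'\|_{L^2}$, Lemma \ref{stabicompact} allows me to assume $\psi,\varphi\in{\cD}(]0,+\infty[)$ up to an error $O(\varepsilon)$ in the cross term; moreover, when the orthogonality hypothesis forces one of the profiles, say $\varphi$, to vanish on $]-\infty,a]$, a routine variant of the construction in Lemma \ref{stabicompact} (placing the corrector bump $\theta^\lambda$ to the right of $a$) lets me even assume $\varphi\in{\cD}(]a,+\infty[)$. I then handle the two orthogonality regimes of Definition \ref{orthogen} separately.

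\emph{Regime $|\log(\beta_n/\alpha_n)|\to\infty$}, say $\beta_n/\alpha_n\to\infty$. Write $\mathrm{supp}\,\psi'\subset[c_1,d_1]$, $\mathrm{supp}\,\varphi'\subset[c_2,d_2]$ with $0<c_i<d_i$. Then $\nabla g_n$ is supported in the small ball $B(y_n,{\rm e}^{-c_2\beta_n})$, while on the support of $\nabla f_n$ one has $|x-x_n|\ge{\rm e}^{-d_1\alpha_n}$, so $\|\nabla f_n\|_{L^\infty}\le\|\psi'\|_{L^\infty}{\rm e}^{d_1\alpha_n}/\sqrt{2\pi\alpha_n}$. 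Cauchy--Schwarz gives
$$\left|\int\nabla f_n\cdot\nabla g_n\right|\le\frac{\|\psi'\|_{L^\infty}{\rm e}^{d_1\alpha_n}}{\sqrt{2\pi\alpha_n}}\,\bigl|B(y_n,{\rm e}^{-c_2\beta_n})\bigr|^{1/2}\,\|\nabla g_n\|_{L^2}\lesssim\frac{{\rm e}^{d_1\alpha_n-c_2\beta_n}}{\sqrt{\alpha_n}}\,\|\varphi'\|_{L^2}\longrightarrow0,$$
because $d_1\alpha_n-c_2\beta_n\to-\infty$.

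\emph{Regime $\alpha_n=\beta_n$ and $-\log|x_n-y_n|/\alpha_n\to a\ge0$}, with $\varphi\in{\cD}(]a,+\infty[)$. Set $w_n:=x_n-y_n$ (nonzero for $n$ large, since $a<\infty$) and $a_0:=\inf\mathrm{supp}\,\varphi'>a$. As $\varphi'\bigl(\tfrac{-\log|x-y_n|}{\alpha_n}\bigr)\ne0$ forces $|x-y_n|\le{\rm e}^{-a_0\alpha_n}$, and ${\rm e}^{-a_0\alpha_n}=o(|w_n|)$ because $|w_n|={\rm e}^{-a\alpha_n+o(\alpha_n)}$ and $a_0>a$, the triangle inequality yields $|x-x_n|\ge|w_n|-|x-y_n|\ge\tfrac12|w_n|$ wherever the integrand is nonzero. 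Using $|(x-x_n)\cdot(x-y_n)|\le|x-x_n|\,|x-y_n|$ and then polar coordinates about $y_n$,
$$\left|\int\nabla f_n\cdot\nabla g_n\right|\le\frac{1}{2\pi\alpha_n}\int_{\R^2}\frac{\bigl|\psi'\bigl(\tfrac{-\log|x-x_n|}{\alpha_n}\bigr)\bigr|\,\bigl|\varphi'\bigl(\tfrac{-\log|x-y_n|}{\alpha_n}\bigr)\bigr|}{|x-x_n|\,|x-y_n|}\,dx\le\frac{\|\psi'\|_{L^\infty}}{\pi\alpha_n|w_n|}\int_{\R^2}\frac{\bigl|\varphi'\bigl(\tfrac{-\log|x-y_n|}{\alpha_n}\bigr)\bigr|}{|x-y_n|}\,dx=\frac{2\|\psi'\|_{L^\infty}}{|w_n|}\int_{\R}|\varphi'(s)|\,{\rm e}^{-\alpha_n s}\,ds,$$
and the last integral is $\le\|\varphi'\|_{L^1}{\rm e}^{-a_0\alpha_n}$ since $\mathrm{supp}\,\varphi'\subset[a_0,+\infty[$. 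Hence the cross term is $O\bigl({\rm e}^{-a_0\alpha_n}/|w_n|\bigr)=O\bigl({\rm e}^{-(a_0-a)\alpha_n+o(\alpha_n)}\bigr)\to0$. Combining the two regimes and letting the approximation parameter $\varepsilon$ go to $0$ gives $(\nabla f_n,\nabla g_n)_{L^2}\to0$, which proves Proposition \ref{stabi}.

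The crucial point — and the only place the orthogonality hypothesis is genuinely used beyond separating scales — is the reduction to $\varphi\in{\cD}(]a,+\infty[)$ in the second regime: this is what forces $|x-x_n|$ to stay comparable to $|w_n|$ throughout the support of the integrand, taming the singular kernel $1/|x-y_n|$ and producing the exponential gain ${\rm e}^{-(a_0-a)\alpha_n}$. I expect the only slightly delicate steps to be (i) justifying that reduction while keeping the $L^2$-error uniformly $O(\varepsilon)$, and (ii) absorbing the $o(\alpha_n)$ coming from $-\log|w_n|/\alpha_n\to a$ into the decay; the kernel estimates themselves are elementary once the supports are correctly located.
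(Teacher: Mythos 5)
Your proof is correct, and it shares the paper's skeleton (reduce to the single cross term $(\nabla f_n,\nabla g_n)_{L^2}$, then use Lemma \ref{stabicompact} to replace the profiles by compactly supported ones, the error being controlled bilinearly by the gradient norms), but the two regimes are estimated along genuinely different lines. For orthogonal scales, the paper translates one core to the origin, passes to a limit $a$ of $-\log|x_n|/\alpha_n$, and splits the integral into several pieces ($J_n^1, J_n^2, J_n^3$ plus Cauchy-Schwarz slivers), with separate discussions according to $a=0$, $a>0$ and which scale dominates; your bound (the gradient of the small-scale bubble is supported in a ball of radius ${\rm e}^{-c_2\beta_n}$, the other gradient is at most $\|\psi'\|_{L^\infty}{\rm e}^{d_1\alpha_n}/\sqrt{2\pi\alpha_n}$ in $L^\infty$, then Cauchy-Schwarz on that ball) is simpler and completely independent of the cores, which is exactly as it should be since \eqref{caseI} imposes nothing on them; just record explicitly that $|\log(\beta_n/\alpha_n)|\to\infty$ only forces $\beta_n/\alpha_n\to 0$ or $\infty$ along subsequences, which suffices by the symmetry of the cross term. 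In the equal-scale case both arguments rest on the same geometric mechanism, namely that on the support of the vanishing profile's derivative the distance to the other core is comparable to $|x_n-y_n|$, so the other bubble's gradient there is of size $|x_n-y_n|^{-1}$; the difference is in the bookkeeping: you first strengthen Lemma \ref{stabicompact} so that the approximant lies in $\cD(]a,+\infty[)$ (placing the zero-mean corrector bump to the right of $a$ does work, and yields $a_0=\inf\mathrm{supp}\,\varphi'>a$, hence the clean gain ${\rm e}^{-(a_0-a)\alpha_n+o(\alpha_n)}$), whereas the paper keeps the exact vanishing for $s\le a$, discards the bands $a\le s\le b$ and $s\ge B$ by Cauchy-Schwarz using the $L^2$-smallness of $\psi'$ there, and on $[b,B]$ exploits ${\rm e}^{s\alpha_n}|x_n|\to\infty$. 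Both routes are valid; yours trades the paper's case analysis for the modified density lemma, which is routine but should be written out in full.
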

\begin{proof}
To prove \eqref{stab} it suffices to show that the sequence $I_n:=\left(\nabla f_n, \nabla g_n\right)_{L^2}$ tends to zero as $n$ goes to infinity. Using density argument (see Lemma \ref{stabicompact}) and the space translation invariance, we can assume without loss of generality that $\underline y=0$ and $\varphi, \psi \in {\cD}$ . Write
\beqn
I_n&=&\frac{1}{2\pi\sqrt{\alpha_n\beta_n}}\,\int_{\R^2}\,\psi'\left(\frac{-\log|x-x_n|}{\alpha_n}\right)
\varphi'\left(\frac{-\log|x|}{\beta_n}\right)\frac{x}{|x|^2}\cdot\frac{x-x_n}{|x-x_n|^2}\,dx\\\\
&=&\frac{1}{2\pi\sqrt{\alpha_n\beta_n}}\,\int_{\R^2}\,\psi'\left(\frac{-\log|x|}{\alpha_n}\right)
\varphi'\left(\frac{-\log|x+x_n|}{\beta_n}\right)\frac{x}{|x|^2}\cdot\frac{x+x_n}{|x+x_n|^2}\,dx.
\eeqn
The change of variable $|x|=r={\rm e}^{-\alpha_n s}$ yields  to
\bq
\label{In}
I_n=\frac{1}{2\pi}\sqrt{\frac{\alpha_n}{\beta_n}}\,\int_0^\infty\int_0^{2\pi}\,\psi'(s)\,\varphi'\left(\frac{-\log|{\rm e}^{-\alpha_n s}{\rm e}^{{\rm i}\theta}+x_n|}{\beta_n}\right)\,{\rm e}^{{\rm i}\theta}\cdot\frac{{\rm e}^{{\rm i}\theta}+{\rm e}^{\alpha_n s}x_n}{|{\rm e}^{{\rm i}\theta}+{\rm e}^{\alpha_n s}x_n|^2}\,ds\,d\theta.
\eq

According to the definition of the orthogonality, we shall distinguish two cases. Let us assume first that $\underline\alpha=\underline\beta$, $-\frac{\log|x_n|}{\alpha_n}\to a\geq 0$ and that $\psi$ is null for $s\leq a$.  Taking advantage of the fact that $\psi'\in L^2([a,\infty[)$,  we infer that for any $\eps>0$ there exists $a<b<B$ such that
$$
\|\psi'\|_{L^2(a\leq s\leq b)}\leq\eps\;\;\;\mbox{and}\;\;\;\|\psi'\|_{L^2( s\geq B)}\leq\eps\,.
$$
We decompose then $I_n$ as follows
$$
I_n=J_n+K_n:=\int_{\{{\rm e}^{-B\alpha_n}\leq |x|\leq {\rm e}^{- b\alpha_n}\}}\,\nabla f_n(x)\cdot\nabla g_n(x)\,dx+K_n\,.
$$
Clearly, $|K_n|\lesssim\eps$ and
$$
J_n=\frac{1}{2\pi}\,\int_b^B\int_0^{2\pi}\,\psi'(s)\,\varphi'\left(\frac{-\log|{\rm e}^{-\alpha_n s}{\rm e}^{{\rm i}\theta}+x_n|}{\alpha_n}\right)\,{\rm e}^{{\rm i}\theta}\cdot\frac{{\rm e}^{{\rm i}\theta}+{\rm e}^{\alpha_n s}x_n}{|{\rm e}^{{\rm i}\theta}+{\rm e}^{\alpha_n s}x_n|^2}\,ds\,d\theta\,.
$$
Now, since $-\frac{\log|x_n|}{\alpha_n}\stackrel{n\to\infty}\longrightarrow a\geq 0\,$, we get for all $s>a$,
$${\rm e}^{s\alpha_n}\,|x_n|\stackrel{n\to\infty}\longrightarrow\infty.$$
Likewise for any $b>a$ and $n$ large enough,
$$
{\rm e}^{s\alpha_n}\,|x_n|-1\geq \frac{1}{2}{\rm e}^{s\alpha_n}\,|x_n|\quad\mbox{uniformly in}\quad s\in[b,\infty[\,.
$$
Hence
\beqn
|J_n|&\lesssim&\int_b^B\int_0^{2\pi}\,|\psi'(s)|\,\Big|\varphi'\left(\frac{-\log|{\rm e}^{-\alpha_n s}{\rm e}^{{\rm i}\theta}+x_n|}{\alpha_n}\right)\Big|\frac{ds\,d\theta}{{\rm e}^{s\alpha_n}|x_n|}\\
&\lesssim&\frac{1}{{\rm e}^{b\alpha_n}|x_n|}\|\psi'\|_{L^\infty}\|\varphi'\|_{L^\infty}\to 0\,.
\eeqn

The second case $\underline\alpha\perp\underline\beta$ can be handled in a similar way. Indeed, assuming that $\underline\alpha\perp\underline\beta$ and $-\frac{\log|x_n|}{\alpha_n}\to a\geq 0$, we get (for $\eta>0$ small enough and $n$ large),
\bq
\label{eta}
{\rm e}^{-(a+\eta)\alpha_n}\leq |x_n|\leq {\rm e}^{-(a-\eta)\alpha_n}\,.
\eq
If $a=0$, we argue exactly as above to obtain
$$
|J_n|\lesssim\sqrt{\frac{\alpha_n}{\beta_n}}\,\frac{1}{{\rm e}^{(b-\eta)\alpha_n}}\|\psi'\|_{L^\infty}\|\varphi'\|_{L^\infty}\to 0\,.
$$
If $a>0$, we decompose $J_n$ as follows
\beqn
J_n&=&\frac{1}{2\pi}\sqrt{\frac{\alpha_n}{\beta_n}}\,\int_b^{a-\delta}\int_0^{2\pi}\,\psi'(s)\,\varphi'\left(\frac{-\log|{\rm e}^{-\alpha_n s}{\rm e}^{{\rm i}\theta}+x_n|}{\beta_n}\right)\,{\rm e}^{{\rm i}\theta}\cdot\frac{{\rm e}^{{\rm i}\theta}+{\rm e}^{\alpha_n s}x_n}{|{\rm e}^{{\rm i}\theta}+{\rm e}^{\alpha_n s}x_n|^2}\,ds\,d\theta\\&+&\frac{1}{2\pi}\sqrt{\frac{\alpha_n}{\beta_n}}\,\int_{a-\delta}^{a+\delta}\int_0^{2\pi}\,\psi'(s)\,\varphi'\left(\frac{-\log|{\rm e}^{-\alpha_n s}{\rm e}^{{\rm i}\theta}+x_n|}{\beta_n}\right)\,{\rm e}^{{\rm i}\theta}\cdot\frac{{\rm e}^{{\rm i}\theta}+{\rm e}^{\alpha_n s}x_n}{|{\rm e}^{{\rm i}\theta}+{\rm e}^{\alpha_n s}x_n|^2}\,ds\,d\theta\\&+&\frac{1}{2\pi}\sqrt{\frac{\alpha_n}{\beta_n}}\,\int_{a+\delta}^B\int_0^{2\pi}\,\psi'(s)\,\varphi'\left(\frac{-\log|{\rm e}^{-\alpha_n s}{\rm e}^{{\rm i}\theta}+x_n|}{\beta_n}\right)\,{\rm e}^{{\rm i}\theta}\cdot\frac{{\rm e}^{{\rm i}\theta}+{\rm e}^{\alpha_n s}x_n}{|{\rm e}^{{\rm i}\theta}+{\rm e}^{\alpha_n s}x_n|^2}\,ds\,d\theta\\&=&J_n^1+J_n^2+J_n^3,
\eeqn
where $\delta>0$ is a small parameter to be chosen later. Clearly
$$
|J_n^3|\lesssim \sqrt{\frac{\alpha_n}{\beta_n}}\,\frac{1}{{\rm e}^{(\delta-\eta)\alpha_n}}\to 0,
$$
provided that $\eta<\delta$. Besides, by Cauchy-Schwarz inequality (as for the term $K_n$), we have $|J_n^2|\leq \eps$ for $\delta$ small enough. It remains to deal with the first term $J_n^1$. Using estimate \eqref{eta}, we get for $\eta<\delta$ and $b\leq s\leq a-\delta$,
$$
|{\rm e}^{s\alpha_n}\,x_n|\leq {\rm e}^{(\eta-\delta)\alpha_n}\to 0.
$$
 Therefore
$$
 |J_n^1|\lesssim \sqrt{\frac{\alpha_n}{\beta_n}}\,\int_b^{a-\delta}\int_0^{2\pi}\,|\psi'(s)|\left|\varphi'\left(\frac{-\log|{\rm e}^{-\alpha_n s}{\rm e}^{{\rm i}\theta}+x_n|}{\beta_n}\right)\right|\,ds\,d\theta\,.
 $$
If $\frac{\alpha_n}{\beta_n}\to 0$, we are done thanks to the estimate
 $$
|J^1_n|\lesssim\sqrt{\frac{\alpha_n}{\beta_n}}\,\|\psi'\|_{L^\infty}\|\varphi'\|_{L^\infty}.
$$
If not,  the integral $J_n^1$  is null for n large enough which ensures the result. Indeed, in view of \eqref{eta} and the fact $\frac{\alpha_n}{\beta_n}\to \infty$,  we have for $b\leq s\leq a-\delta$
 $$
 \frac{-\log|{\rm e}^{-\alpha_n s}{\rm e}^{{\rm i}\theta}+x_n|}{\beta_n}\geq b \,\frac{\alpha_n}{\beta_n}+\circ(1)\, \to \infty.
 $$
Since $\varphi \in {\cD}$,  we deduce  that  for $n$ large enough  $\varphi'\left(\frac{-\log|{\rm e}^{-\alpha_n s}{\rm e}^{{\rm i}\theta}+x_n|}{\beta_n}\right)=0$, and thus $J_n^1$  is null.
The proof of Proposition \ref{stabi} is then completely achieved.
\end{proof}

\section{Appendix}\label{appendix}

%@@@@@@@@@@@@@@@@@@@@@@@@@@@@@@@@@@@@%@@@@@@@@@@@@@@@@@@@@@@@@@@@@@@@@@@@@%@@@@@@@@

\subsection{Appendix A: Comments on elementary concentrations}
\label{examples}

In this appendix, we will present some more general examples than the one illustrated in Lemma \ref{anisoexamp}. But first recall the following relevant result proved in \cite{Orlicz-Book}.
\begin{lem}
\label{monoto}
 We have the following properties\\
\noindent$\bullet$ {\it Lower semi-continuity}:
$$u_n\to u\quad\mbox{\sf a.e.}\quad\Longrightarrow\quad\|u\|_{\cL}\leq
\Liminf_{n\to\infty}\|u_n\|_{\cL}.
$$
\noindent$\bullet$ {\it Monotonicity}:
$$
|u_1|\leq|u_2|\quad\mbox{\sf
a.e.}\quad\Longrightarrow\quad\|u_1\|_{\cL}\leq\|u_2\|_{\cL}.
$$
\noindent$\bullet$ {\it Strong Fatou property}:
$$
0\leq u_n\nearrow u\quad\mbox{\sf
a.e.}\quad\Longrightarrow\quad\|u_n\|_{\cL}\nearrow\|u\|_{\cL}.
$$
\end{lem}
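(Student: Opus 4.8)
The plan is to establish the three properties in the order (Monotonicity), (Lower semi-continuity), (Strong Fatou property), each new one relying on its predecessors together with one classical integration theorem. The common engine is that $\phi(s)={\rm e}^{s^2}-1$ is continuous, non-negative and strictly increasing, so that for any measurable $u$ the map $\lambda\mapsto\Int_{\R^2}\phi(|u|/\lambda)\,dx$ is non-increasing, and the infimum in \eqref{norm} (with $1$ replaced by $\kappa$, as noted in the Remarks) is attained at $\lambda=\|u\|_{\cL}$ whenever $\|u\|_{\cL}>0$ — the latter being a one-line consequence of the monotone convergence theorem applied along $\lambda_j\downarrow\|u\|_{\cL}$, since then $\phi(|u|/\lambda_j)\nearrow\phi(|u|/\|u\|_{\cL})$.

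For \emph{monotonicity}, I would simply observe that if $|u_1|\le|u_2|$ a.e.\ and $\lambda>0$ is admissible for $u_2$, i.e.\ $\Int_{\R^2}\phi(|u_2|/\lambda)\,dx\le\kappa$, then the monotonicity of $\phi$ forces $\phi(|u_1(x)|/\lambda)\le\phi(|u_2(x)|/\lambda)$ for a.e.\ $x$, so $\lambda$ is admissible for $u_1$ as well; taking the infimum over admissible $\lambda$ gives $\|u_1\|_{\cL}\le\|u_2\|_{\cL}$.

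For \emph{lower semi-continuity}, set $L:=\liminf_n\|u_n\|_{\cL}$ (nothing to prove if $L=\infty$), fix $\lambda>L$, and extract a subsequence with $\|u_{n_k}\|_{\cL}\to L$, so that $\|u_{n_k}\|_{\cL}<\lambda$ for $k$ large; using that $\lambda\mapsto\Int\phi(|u_{n_k}|/\lambda)\,dx$ is non-increasing one deduces $\Int_{\R^2}\phi(|u_{n_k}|/\lambda)\,dx\le\kappa$. Since $u_{n_k}\to u$ a.e.\ and $\phi$ is continuous, Fatou's lemma gives $\Int_{\R^2}\phi(|u|/\lambda)\,dx\le\liminf_k\Int_{\R^2}\phi(|u_{n_k}|/\lambda)\,dx\le\kappa$, whence $\|u\|_{\cL}\le\lambda$; letting $\lambda\downarrow L$ concludes. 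The \emph{strong Fatou property} then follows quickly: for $0\le u_n\nearrow u$ a.e., monotonicity makes $(\|u_n\|_{\cL})$ non-decreasing with a limit $M\le\|u\|_{\cL}$, while lower semi-continuity applied to $u_n\to u$ a.e.\ yields $\|u\|_{\cL}\le M$, so $\|u_n\|_{\cL}\nearrow\|u\|_{\cL}$; alternatively one fixes $\lambda>M$, notes $\Int_{\R^2}\phi(|u_n|/\lambda)\,dx\le\kappa$ for every $n$, and passes to the limit by monotone convergence using $\phi(|u_n|/\lambda)\nearrow\phi(|u|/\lambda)$.

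I do not expect any genuine obstacle here; the only point demanding care is the bookkeeping around the definition of the norm — one must keep "$\lambda$ is admissible for $u$" distinct from "$\lambda\ge\|u\|_{\cL}$", and use the monotonicity (and, at one spot, continuity) of $\phi$ to transfer an integral bound from a value just above the norm down to $\lambda$ itself. No uniform integrability is required, because every estimate is made at a fixed $\lambda$ strictly above the relevant supremum or limit superior of norms, after which Fatou's lemma and the monotone convergence theorem do all the work.
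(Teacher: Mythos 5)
Your proof is correct. Note that the paper does not actually prove Lemma \ref{monoto}: it simply recalls the statement and cites the monograph \cite{Orlicz-Book}, so there is no in-paper argument to compare against. Your argument is the standard self-contained proof of these Luxemburg-norm properties, and the bookkeeping is handled properly: you correctly use that any $\lambda$ strictly larger than $\|u\|_{\cL}$ is admissible (via the monotonicity of $\lambda\mapsto\int\phi(|u|/\lambda)\,dx$), which is exactly the point needed to transfer the integral bound before applying Fatou's lemma for the lower semi-continuity, and deducing the strong Fatou property from monotonicity plus lower semi-continuity is the cleanest route. The preliminary remark that the infimum is attained at $\lambda=\|u\|_{\cL}$ is true (by monotone convergence) but is not actually used in your argument, so it could be dropped.
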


The first example is of type $f_{\alpha_n}\circ \varphi$. More precisely, we have the following result.
\begin{prop}
\label{genrad} Let $\psi\in{\cP}$, $(\alpha_n)$ a scale
and set
$$
g_n(x):=\sqrt{\frac{\alpha_n}{2\pi}}\;\psi\Big(\frac{-\log|\varphi(x)|}{\alpha_n}\Big),
$$
where~$\varphi : \R^2 \longrightarrow \R^2 $ is a global diffeomorphism  vanishing at the origin
 and satisfying ~
\bq
\label{phi}
 \det\left(\dd\varphi^{-1}\right)\in L^\infty\quad\mbox{and}\quad \|\dd\,\varphi\| \in L^\infty\,.
\eq
Then
 $$ g_n(x) \asymp \sqrt{\frac{\alpha_n}{2\pi}}\;\psi\Big(\frac{-\log|x|}{\alpha_n}\Big) \quad\mbox{in}\quad \cL\,.$$
\end{prop}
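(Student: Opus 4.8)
Set $h_n(x):=\sqrt{\frac{\alpha_n}{2\pi}}\,\psi\!\left(\frac{-\log|x|}{\alpha_n}\right)$, the radial elementary concentration attached to $\psi$, and note that $g_n=h_n\circ\varphi$; the goal is therefore $\|g_n-h_n\|_{\cL}\to0$. The plan is to first distil from \eqref{phi} the only two facts about $\varphi$ that are needed. Since $\varphi(0)=0$ and $\|\dd\varphi\|\in L^\infty$, one has $|\varphi(x)|\le\|\dd\varphi\|_{L^\infty}|x|$; and because for $2\times2$ matrices $\|A^{-1}\|=\|A\|/|\det A|$, we get $\|\dd\varphi^{-1}(y)\|=\|\dd\varphi(\varphi^{-1}(y))\|\,|\det\dd\varphi^{-1}(y)|\le\|\dd\varphi\|_{L^\infty}\,\|\det\dd\varphi^{-1}\|_{L^\infty}$, so $\varphi^{-1}$ is globally Lipschitz and $|\varphi(x)|\ge c\,|x|$. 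Equivalently there is a constant $C_0$ with $\bigl|\log(|\varphi(x)|/|x|)\bigr|\le C_0$ for all $x\neq0$; apart from this I shall only use $|\det\dd\varphi^{-1}|\in L^\infty$.

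The next step is a reduction to a smooth profile, exactly as in the proof of Lemma \ref{anisoexamp}. Fix $\eps>0$; by Lemma \ref{stabicompact} choose $\psi_\eps\in\cD(]0,+\infty[)$ with $\|\psi'-\psi_\eps'\|_{L^2}\le\eps$. Writing $h_n^\theta(x):=\sqrt{\frac{\alpha_n}{2\pi}}\,\theta\!\left(\frac{-\log|x|}{\alpha_n}\right)$ and $g_n^\theta:=h_n^\theta\circ\varphi$, split
\[
g_n-h_n\;=\;g_n^{\psi-\psi_\eps}\;+\;\bigl(g_n^{\psi_\eps}-h_n^{\psi_\eps}\bigr)\;-\;h_n^{\psi-\psi_\eps}.
\]
For the third summand, $\psi-\psi_\eps\in\cP$, so \eqref{profile} combined with the elementary bound $\frac{|\theta(s)|}{\sqrt s}\le\|\theta'\|_{L^2}$ (valid for every $\theta\in\cP$) gives $\limsup_n\|h_n^{\psi-\psi_\eps}\|_{\cL}\le\frac1{\sqrt{4\pi}}\|\psi'-\psi_\eps'\|_{L^2}\le\frac{\eps}{\sqrt{4\pi}}$. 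For the first summand I use the change of variables $y=\varphi(x)$, $dx=|\det\dd\varphi^{-1}(y)|\,dy$, inside the defining integral \eqref{norm}: for all $\lambda>0$ and all $\theta$,
\[
\int_{\R^2}\!\left({\rm e}^{|g_n^\theta(x)|^2/\lambda^2}-1\right)dx
\;\le\;\|\det\dd\varphi^{-1}\|_{L^\infty}\int_{\R^2}\!\left({\rm e}^{|h_n^\theta(y)|^2/\lambda^2}-1\right)dy;
\]
together with ${\rm e}^{t/\mu^2}-1\le\mu^{-2}({\rm e}^t-1)$ for $\mu\ge1$ this yields $\|g_n^\theta\|_{\cL}\le C_1\,\|h_n^\theta\|_{\cL}$ with $C_1$ depending only on $\varphi$, hence $\limsup_n\|g_n^{\psi-\psi_\eps}\|_{\cL}\le\frac{C_1\eps}{\sqrt{4\pi}}$.

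The heart of the matter is the middle summand $w_n:=g_n^{\psi_\eps}-h_n^{\psi_\eps}$. Let ${\rm supp}\,\psi_\eps\subset[a,b]$ with $0<a<b$. Then $w_n$ vanishes off $\{{\rm e}^{-b\alpha_n}\le|x|\le{\rm e}^{-a\alpha_n}\}\cup\{{\rm e}^{-b\alpha_n}\le|\varphi(x)|\le{\rm e}^{-a\alpha_n}\}$, which by the bi-Lipschitz bound is contained in a fixed ball $B(0,R_0)$ for $n$ large. Moreover $\psi_\eps$ is Lipschitz and the two arguments $\frac{-\log|\varphi(x)|}{\alpha_n}$ and $\frac{-\log|x|}{\alpha_n}$ differ by $\frac1{\alpha_n}\log(|x|/|\varphi(x)|)$, of absolute value $\le C_0/\alpha_n$, so $\|w_n\|_{L^\infty}\le\frac{C_0\|\psi_\eps'\|_{L^\infty}}{\sqrt{2\pi\alpha_n}}\to0$. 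Thus $w_n$ is uniformly small and supported in a fixed bounded set, whence $\int_{\R^2}\!\left({\rm e}^{|w_n|^2/\lambda^2}-1\right)dx\le\pi R_0^2\,\left({\rm e}^{\|w_n\|_{L^\infty}^2/\lambda^2}-1\right)\to0$ for every $\lambda>0$, so $\|w_n\|_{\cL}\to0$. Combining the three estimates gives $\limsup_n\|g_n-h_n\|_{\cL}\le\frac{(C_1+1)\eps}{\sqrt{4\pi}}$, and letting $\eps\to0$ finishes the proof.

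I expect the delicate point to be the estimate for $w_n$: the distortion caused by $\varphi$ must be read in the logarithmic variable $s=-\log|\cdot|/\alpha_n$, where \eqref{phi} merely produces a bounded translation, so that after the factor $1/\alpha_n$ coming from the Lipschitz profile and the amplitude $\sqrt{\alpha_n}$ one is left with $O(\alpha_n^{-1/2})$; estimating instead the Lipschitz constant of $h_n^{\psi_\eps}$ near its support (which is of order ${\rm e}^{b\alpha_n}$) is far too crude. A secondary technical point is deducing the two-sided bound $c|x|\le|\varphi(x)|\le C|x|$ from the asymmetric hypotheses \eqref{phi}, which is where the $2\times2$ adjugate identity is invoked.
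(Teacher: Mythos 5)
Your proof is correct, but it follows a genuinely different route from the paper's. The paper first reduces to \emph{increasing} profiles by splitting $\psi=\psi_1-\psi_2$ with $\psi_i'=\tfrac12(|\psi'|\pm\psi')$, and then, using the two-sided bound $\lambda_2|x|\le|\varphi(x)|\le\lambda_1|x|$ (obtained, as in your argument, from the $2\times2$ identity $\|\bA^{-1}\|=\|\bA\|/|\det\bA|$), sandwiches the error $w_n$ between two purely radial differences $w_n^1\le w_n\le w_n^2$ built from the dilations $\lambda_1|x|$ and $\lambda_2|x|$; exactly as in Lemma \ref{anisoexamp}, these converge strongly to $0$ in $H^1$ (continuity of translations in $L^2$ after the change of variable $s=-\log r/\alpha_n$), and the conclusion follows from the Sobolev embedding \eqref{2D-embed} combined with the modulus-invariance \eqref{moeql} and the monotonicity of the Orlicz norm (Lemma \ref{monoto}). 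You instead approximate the profile in $\cD(]0,\infty[)$ via Lemma \ref{stabicompact}, absorb the rough parts by the bound $\|h_n^\theta\|_{\cL}\lesssim\|\theta'\|_{L^2}$ together with a Jacobian change of variables in the Orlicz integral (this is where $\det(\dd\varphi^{-1})\in L^\infty$ enters explicitly, whereas the paper uses it only through the bi-Lipschitz bound), and kill the smooth-profile error by the pointwise estimate $\|w_n\|_{L^\infty}=O(\alpha_n^{-1/2})$ on a set of bounded measure, since the distortion of $\varphi$ is a bounded translation in the logarithmic variable. Both proofs rest on the same comparability $c|x|\le|\varphi(x)|\le C|x|$; the paper's argument is shorter and needs no approximation of the profile, relying on the lattice properties of the Orlicz norm, while yours is more quantitative (it yields $\|g_n^\theta\|_{\cL}\le C_1\|h_n^\theta\|_{\cL}$ with an explicit constant) and makes transparent why the perturbation is negligible at the level of the scale, without any monotone-sandwich device.
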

\begin{rem}
Assumption \eqref{phi} is required to ensure that the sequence $(g_n)$  is bounded  in $ H^1$,  converges strongly to $0$ in $L^2$ and satisfies
 $\Liminf_{n\to\infty}\,\|g_n\|_{{\cL}}>0$.
\end{rem}
\begin{proof}[Proof of Proposition \ref{genrad}]
Assumption \eqref{phi}  ensures that $\|\dd\,\varphi^{-1}\| \in L^\infty$. Hence, one can write
\bq
\label{phiencad}
\frac{|x|}{\|\dd\,\varphi^{-1}\|_{L^\infty}}\leq |\varphi(x)|\leq \|\dd\,\varphi\|_{L^\infty}\,|x|\,.
\eq
 Let us first assume that the profile $\psi$ is increasing and set
$$
w_n(x)=g_n(x)-\sqrt{\frac{\alpha_n}{2\pi}}\;\psi\Big(\frac{-\log|x|}{\alpha_n}\Big).
$$
Therefore, we get
$$
w_n^1(x)\leq w_n(x)\leq w_n^2(x),
$$
where
\beqn
w_n^1(x)&=&\sqrt{\frac{\alpha_n}{2\pi}}\;\psi\Big(\frac{-\log \lam_1|x|}{\alpha_n}\Big)-\sqrt{\frac{\alpha_n}{2\pi}}\;\psi\Big(\frac{-\log|x|}{\alpha_n}\Big),\\
w_n^2(x)&=&\sqrt{\frac{\alpha_n}{2\pi}}\;\psi\Big(\frac{-\log \lam_2|x|}{\alpha_n}\Big)-\sqrt{\frac{\alpha_n}{2\pi}}\;\psi\Big(\frac{-\log|x|}{\alpha_n}\Big)\,,
\eeqn
with~$\lam_1= \|\dd\,\varphi\|_{L^\infty}$ and ~$\lam_2 = \frac{1}{\|\dd\,\varphi^{-1}\|_{L^\infty}}$.
We deduce that  for all $x \in \R^2$
$$ \big| w_n(x) \big| \leq \big| w_n^1(x) \big| + \big| w_n^2(x) \big|
\leq \Big|\big| w_n^1(x) \big| + \big| w_n^2(x) \big| \Big| $$
which ensures the result in the same fashion than in Lemma \ref{anisoexamp}. \\

To achieve the proof of the proposition, it suffices to remark that any profile $\psi$ can be decomposed as follows
\beqn
\psi(y)&=&\frac{1}{2}\,\int_0^y\,\Big(|\psi'(\tau)|+\psi'(\tau)\Big)\,d\tau-\frac{1}{2}\,\int_0^y\,\Big(|\psi'(\tau)|-\psi'(\tau)\Big)\,d\tau\\
&:=&\psi_1(y)-\psi_2(y)\,.
\eeqn
Since $\psi_1$ and $\psi_2$ belong to the set of profiles $\cP$ and are increasing, the result follows from the first step.
\end{proof}

By taking some particular sequences of diffeomorphims, we derive other examples as follows.
\begin{lem}
\label{examp2}
 Let us define the sequence $(u_\alpha)$ by
$$
u_\alpha(x)=f_\alpha\left(\bA_\alpha x\right),
$$ where $(\bA_\alpha)$ is  a sequence of $2\times 2$ invertible matrix satisfying
\bq
\label{matrix1}
\alpha|\det\,\bA_\alpha|\longrightarrow\infty,
\eq
\bq
\label{matrix2}
\|\bA_\alpha\|^2\lesssim |\det\bA_\alpha| \quad\mbox{and}
\eq
\bq
\label{matrix3}
\frac{\log|\det\bA_\alpha|^{1/2}}{\alpha}\longrightarrow\,a\geq 0,\;\;\;\alpha\to\infty.
\eq
Then
$$
u_\al(x)\,\asymp\,\sqrt{\frac{\alpha}{2\pi}}\;{\mathbf L}_{a}\,\Big(\frac{-\log|x|}{\alpha}\Big)\quad\mbox{in}\quad\,\cL\,,
$$
with ${\mathbf L}_{a}(s) := {\mathbf L} (s-a)$.
\end{lem}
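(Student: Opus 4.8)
The plan is to whittle $u_\alpha$ down to a radial Moser profile by a chain of approximations in $\cL$, the only genuinely new difficulty compared with Lemma \ref{anisoexamp} coming from the fact that $\det\bA_\alpha$ is allowed to vary. First I would reduce to a diagonal matrix: writing the singular value decomposition $\bA_\alpha=U_\alpha D_\alpha V_\alpha$ with $U_\alpha,V_\alpha$ orthogonal and $D_\alpha=\mathrm{diag}(\sigma_{1,\alpha},\sigma_{2,\alpha})$, $\sigma_{1,\alpha}\geq\sigma_{2,\alpha}>0$, the radiality of $f_\alpha$ gives $u_\alpha(x)=f_\alpha(D_\alpha V_\alpha x)$, and since $x\mapsto V_\alpha x$ preserves the Lebesgue measure (hence $\|\cdot\|_\cL$) and $\sqrt{\frac{\alpha}{2\pi}}{\mathbf L}_a\big(\frac{-\log|\cdot|}{\alpha}\big)$ is radial, one may assume $\bA_\alpha=D_\alpha$. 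With $\mu_\alpha:=|\det\bA_\alpha|^{1/2}=(\sigma_{1,\alpha}\sigma_{2,\alpha})^{1/2}$, assumption \eqref{matrix2} says exactly that the condition number $\sigma_{1,\alpha}/\sigma_{2,\alpha}$ stays bounded, so that $C_0^{-1/2}\mu_\alpha|x|\leq|D_\alpha x|\leq C_0^{1/2}\mu_\alpha|x|$ for an absolute $C_0\geq1$ and all $x$; assumption \eqref{matrix3} says $b_\alpha:=\frac{\log\mu_\alpha}{\alpha}\to a$; and \eqref{matrix1} says $\alpha\mu_\alpha^2\to\infty$ (a real constraint only if $a=0$).

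Using that ${\mathbf L}$ is nondecreasing, the bounds on $|D_\alpha x|$ and the identity $u_\alpha(x)=\sqrt{\frac{\alpha}{2\pi}}{\mathbf L}\big(\frac{-\log|D_\alpha x|}{\alpha}\big)$ give, with $b_\alpha^\pm:=b_\alpha\pm\frac{\log C_0}{2\alpha}\to a$ and $b_\alpha^+\geq b_\alpha^-$,
\[
\sqrt{\tfrac{\alpha}{2\pi}}\,{\mathbf L}\!\Big(\tfrac{-\log|x|}{\alpha}-b_\alpha^+\Big)\ \leq\ u_\alpha(x)\ \leq\ \sqrt{\tfrac{\alpha}{2\pi}}\,{\mathbf L}\!\Big(\tfrac{-\log|x|}{\alpha}-b_\alpha^-\Big).
\]
Subtracting $v_\alpha(x):=\sqrt{\frac{\alpha}{2\pi}}{\mathbf L}_a\big(\frac{-\log|x|}{\alpha}\big)$, the difference $u_\alpha-v_\alpha$ lies pointwise between the two functions $w_\alpha^\pm(x):=\sqrt{\frac{\alpha}{2\pi}}\big[{\mathbf L}\big(\frac{-\log|x|}{\alpha}-b_\alpha^\pm\big)-{\mathbf L}\big(\frac{-\log|x|}{\alpha}-a\big)\big]$, whence $|u_\alpha-v_\alpha|\leq|w_\alpha^+|+|w_\alpha^-|$; invoking the monotonicity of $\|\cdot\|_\cL$ and \eqref{moeql} as in the proof of Lemma \ref{anisoexamp}, it then suffices to prove $\|w_\alpha^\pm\|_\cL\to0$. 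Since $b_\alpha^\pm\to a$ and, when $a=0$, $\alpha{\rm e}^{2\alpha b_\alpha^\pm}=\alpha\mu_\alpha^2 C_0^{\pm1}\to\infty$, this follows from the one-scale claim: \emph{if $\beta_\alpha\to a\geq0$ and, when $a=0$, $\alpha{\rm e}^{2\alpha\beta_\alpha}\to\infty$, then $g_\alpha(x):=\sqrt{\frac{\alpha}{2\pi}}\big[{\mathbf L}\big(\frac{-\log|x|}{\alpha}-\beta_\alpha\big)-{\mathbf L}_a\big(\frac{-\log|x|}{\alpha}\big)\big]\to0$ in $\cL$}.

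To prove the claim, set $\chi_\alpha(s):={\mathbf L}(s-\beta_\alpha)-{\mathbf L}(s-a)$, so $g_\alpha(x)=\sqrt{\frac{\alpha}{2\pi}}\chi_\alpha\big(\frac{-\log|x|}{\alpha}\big)$, $|\chi_\alpha|\leq|\beta_\alpha-a|=:\tau_\alpha\to0$, $\mathrm{supp}\,\chi_\alpha\subseteq[\min(a,\beta_\alpha),\max(a,\beta_\alpha)+1]$, and, passing to polar coordinates with $s=\frac{-\log r}{\alpha}$,
\[
\int_{\R^2}\Big({\rm e}^{|g_\alpha(x)/\lambda|^2}-1\Big)\,dx=2\pi\alpha\int_{\R}\Big({\rm e}^{\frac{\alpha}{2\pi\lambda^2}|\chi_\alpha(s)|^2}-1\Big){\rm e}^{-2\alpha s}\,ds .
\]
If $a>0$, or $a=0$ and $\beta_\alpha\geq0$, then $\chi_\alpha$ is supported in $\{s\geq0\}$ and $\Lambda_\alpha:=\sup_{s>0}\frac{|\chi_\alpha(s)|}{\sqrt s}\to0$; using ${\rm e}^u-1\leq u{\rm e}^u$ together with $|\chi_\alpha(s)|^2\leq\Lambda_\alpha^2 s$, the right-hand side is $\leq\frac{2\pi\beta}{(2-\beta)^2}$ with $\beta:=\frac{\Lambda_\alpha^2}{2\pi\lambda^2}$, so picking $\lambda=\lambda_\alpha:=\Lambda_\alpha/\sqrt{2\pi\beta_0}$ with $\beta_0$ fixed and small enough that $\frac{2\pi\beta_0}{(2-\beta_0)^2}\leq\kappa$ yields $\|g_\alpha\|_\cL\leq\lambda_\alpha\to0$. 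If $a=0$ and $\beta_\alpha<0$, then $g_\alpha$ also occupies the growing annulus $\{1<|x|<{\rm e}^{\alpha|\beta_\alpha|}\}$, where ${\rm e}^{-2\alpha s}$ is large; here I would split the integral into $\{s\geq0\}$ (contribution $\lesssim{\rm e}^{\alpha\tau_\alpha^2/(2\pi\lambda^2)}-1$) and $\{\beta_\alpha\leq s<0\}$ (after $t=s-\beta_\alpha\in[0,\tau_\alpha]$, where $\chi_\alpha(s)=t$ and ${\rm e}^{-2\alpha s}={\rm e}^{2\alpha|\beta_\alpha|}{\rm e}^{-2\alpha t}$, the contribution is $\lesssim\frac{{\rm e}^{2\alpha|\beta_\alpha|}}{\alpha\lambda^2}$ as soon as $\lambda^2\geq\alpha\tau_\alpha^2$), and then choose $\lambda_\alpha^2$ of the size $\frac{(\log\alpha)^2+{\rm e}^{2\alpha|\beta_\alpha|}}{\alpha}$: this keeps the modular $\leq\kappa$ for large $\alpha$, and $\lambda_\alpha\to0$ precisely because $\alpha{\rm e}^{2\alpha\beta_\alpha}\to\infty$ forces ${\rm e}^{2\alpha|\beta_\alpha|}=o(\alpha)$. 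Combining the two cases proves the claim, and with it the lemma.

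The hard part is exactly this last, degenerate situation $a=0$ with $|\det\bA_\alpha|\to0$: there the approximations $w_\alpha^\pm$ are \emph{not} small in $H^1$ (contrary to what happens in Lemma \ref{anisoexamp}), being spread over a set of measure tending to infinity, so one must normalize the Orlicz modular by an $\alpha$-dependent $\lambda_\alpha\to0$ and use \eqref{matrix1} quantitatively; the remaining steps merely parallel Lemma \ref{anisoexamp} and Proposition \ref{genrad}.
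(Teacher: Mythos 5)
Your argument is correct, and it reaches the conclusion by a partly different route than the paper. The common core is identical: assumption \eqref{matrix2} (in the paper via Lemma \ref{LinAlg}, $\|\bA_\alpha^{-1}\|=\|\bA_\alpha\|/|\det\bA_\alpha|$; in your version via the singular values) yields the sandwich $C^{-1}|\det\bA_\alpha|^{1/2}|x|\leq|\bA_\alpha x|\leq C|\det\bA_\alpha|^{1/2}|x|$, and then monotonicity of ${\mathbf L}$ together with the monotonicity of $\|\cdot\|_{\cL}$ and \eqref{moeql} reduces the lemma to showing that radial differences of shifted Moser concentrations vanish in $\cL$. Where you diverge is in how that smallness is proved: the paper compares $u_\alpha$ with $\sqrt{\alpha/2\pi}\,{\mathbf L}\big(-\log(|\det\bA_\alpha|^{1/2}|x|)/\alpha\big)$ and disposes of the error by the $H^1$ mechanism of Proposition \ref{genrad} and Lemma \ref{anisoexamp} (strong $H^1$ convergence of the comparison terms, with \eqref{matrix1} controlling the $L^2$ part, then the embedding \eqref{2D-embed}), whereas you compare directly with ${\mathbf L}_a$ and prove a self-contained one-scale claim by estimating the Orlicz modular with an $\alpha$-dependent normalization $\lambda_\alpha\to 0$. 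Your route buys explicit decay rates for the $\cL$-distance and treats in one stroke both the bounded-constant discrepancy and the replacement of the shift $\log|\det\bA_\alpha|^{1/2}/\alpha$ by its limit $a$, a step the paper leaves implicit in ``which leads to the result''; the paper's route is shorter because it recycles the earlier lemmas verbatim.

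One correction to your closing commentary, though it does not affect the validity of your proof: in the degenerate case $a=0$, $|\det\bA_\alpha|\to 0$, the comparison functions $w_\alpha^\pm$ \emph{are} small in $H^1$ under \eqref{matrix1}. Their support indeed has measure of order $|\det\bA_\alpha|^{-1}\to\infty$, but the profile difference $\chi_\alpha(s)={\mathbf L}(s-b_\alpha^\pm)-{\mathbf L}(s)$ vanishes linearly at the left edge of its support, so the weighted integral gives $\|w_\alpha^\pm\|_{L^2}^2=\alpha^2\int|\chi_\alpha(s)|^2{\rm e}^{-2\alpha s}\,ds\lesssim\frac{{\rm e}^{2\alpha|b_\alpha^\pm|}+(\log\alpha)^2}{\alpha}$, which tends to $0$ precisely because \eqref{matrix1} forces ${\rm e}^{2\alpha|b_\alpha^\pm|}=o(\alpha)$ (the crude bound ``sup times measure'' that you have in mind overestimates this). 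Together with $\|\nabla w_\alpha^\pm\|_{L^2}=\|{\mathbf L}'(\cdot-b_\alpha^\pm)-{\mathbf L}'\|_{L^2}\to 0$, this shows the paper's $H^1$ reduction works in that regime too; your direct modular computation is thus a legitimate alternative, not a necessity.
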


\begin{proof}[Proof of Lemma \ref{examp2}]

Note that assumption \eqref{matrix1} implies that the sequence $(u_\alpha)$ converges strongly to $0$ in $L^2$, while the second assumption \eqref{matrix2} tell us that $(\nabla\,u_\alpha)$ is bounded in $L^2$. It follows that $(u_\alpha)$ converges weakly to $0$ in $H^1$. To show that the sequence $(u_\alpha)$ does not tend to $0$ in the Orlicz space $\cL$, we first remark that thanks to the simple inequality $|\bA_\alpha\,x|\leq \|\bA_\alpha\||x|$, we have
$$
u_\alpha(x)=\sqrt{\frac{\alpha}{2\pi}}\quad\mbox{if}\quad |x|\leq \frac{{\rm e}^{-\alpha}}{\|\bA_\alpha\|}\,.
$$
Therefore, if
$$
\Int_{\R^2}\,\left({\rm e}^{\frac{|u_\al(x)|^2}{\lam^2}}-1\right)\,dx\leq\kappa,
$$
 then
$$
2\pi\Int_0^{\frac{{\rm e}^{-\alpha}}{\|\bA_\alpha\|}}\,\left({\rm e}^{\frac{\al}{2\pi\lam^2}}-1\right)\,r\,dr\leq\kappa\,.
$$
Taking advantage of \eqref{matrix2}, we deduce  that
$$
\lam^2\geq \frac{\alpha}{2\pi\log\left(1+C\,{\rm e}^{2\al}\,|\det\bA_\al|\right)},
$$
for some absolute positive constant $C$. This leads by means of \eqref{matrix3} to
$$
\Liminf_{\alpha\to\infty}\,\|u_\al\|_{\cL}\geq \frac{1}{2 \sqrt{\pi(1+a)} }\,.
$$
Our aim now is to prove that the sequence $(u_\alpha)$ behaves like the  sequence
$$
\sqrt{\frac{\alpha}{2\pi}}\;{\mathbf L}_{a}\,\Big(\frac{-\log|x|}{\alpha}\Big)
$$
 in the sense that the difference goes to $0$ in  the Orlicz space $\cL$ as $\alpha$ tends to infinity. For this purpose, we shall use the following elementary result from linear algebra.
\begin{lem}
\label{LinAlg}
Let $\bA$ be a $2\times2$ invertible matrix. Then
\bq
\label{LinAl}
\|\bA^{-1}\|=\frac{\|\bA\|}{|\det\bA|}\,.
\eq
\end{lem}
Now combining  Lemma \ref{LinAlg} together with assumption \eqref{matrix2}, we infer that
$$
\frac{1}{C}|\det\bA_\al|^{1/2}\,|x|\leq \frac{|\det\bA_\al|}{\|\bA_\al\|}\,|x|\leq |\bA_\al\,x|\leq \|\bA_\al\||x|\leq\,C|\det\bA_\al|^{1/2}\,|x|\,.
$$
Reasoning exactly as in Proposition \ref{genrad}, it comes in light of  \eqref{matrix1} that
$$
\|u_\al-v_\al\|_{\cL}\longrightarrow 0,\quad \al\to\infty,
$$
where
$$
v_\al(x)=\sqrt{\frac{\alpha}{2\pi}}\;{\mathbf L}\,\left(\frac{-\log|a_\alpha\,x|}{\alpha}\right)\;\;\mbox{with}\;\;a_\al=|\det\bA_\al|^{1/2}\,,
$$
which leads to the result.
\end{proof}

%@@@@@@@@@@@@@@@@@@@@@@@@@@@@@@@@@@@@%@@@@@@@@@@@@@@@@@@@@@@@@@@@@@@@@@@@@%@@@@@@@@

\subsection{Appendix B: Proof of Proposition \ref{sumOrlicz2}}
\label{proofprop{sumOrlicz2}}
The proof goes the same lines as the proof of Proposition 1.18 in \cite{BMM}, but  before entering into the details, let us show the relevance of the orthogonality assumption between the scales. For that purpose, we shall treat a simple example.
\begin{lem}
\label{cex}
Let $(\alpha_n)$ be a scale, and set
\beqn
u_n(x)&=&\sqrt{\frac{\alpha_n}{2\pi}}\,{\mathbf L}\left(-\frac{\log|x|}{\alpha_n}\right)+\sqrt{\frac{\alpha_n}{2\pi}}\,{\mathbf L}_1\left(-\frac{\log|x-x_n|}{\alpha_n}\right)\\
&=&f_n(x)+g_n(x),
\eeqn
where ${\mathbf L}_1(s)={\mathbf L}(s-1)$, and $|x_n|=\frac{1}{2}\,{\rm e}^{-\alpha_n}$. Then, we have
\bq
\label{cex1}
\liminf_{n\to\infty}\|u_n\|_{\cL}\;>\frac{1}{\sqrt{4\pi}}=\Max\Big(\lim_{n\to\infty}\|f_n\|_{\cL},\;\lim_{n\to\infty}\|g_n\|_{\cL}\Big)\,.
\eq
\end{lem}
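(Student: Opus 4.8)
The plan is first to pin down the right–hand side of \eqref{cex1} and then to produce a lower bound for $\liminf_n\|u_n\|_{\cL}$ that strictly exceeds it; the mechanism will be a doubling of the Moser plateau on a tiny ball. For the right–hand side: $f_n=f_{\alpha_n}$ is exactly the Moser example, so $\lim_n\|f_n\|_{\cL}=\frac{1}{\sqrt{4\pi}}$ by \eqref{behav}; and $g_n$ is the translate by $x_n$ of $g_{\underline\alpha,{\mathbf L}_1}$, so by translation invariance of the Orlicz norm together with \eqref{profile}, $\lim_n\|g_n\|_{\cL}=\frac{1}{\sqrt{4\pi}}\max_{s>0}\frac{{\mathbf L}_1(s)}{\sqrt s}=\frac{1}{\sqrt{8\pi}}$, the maximum of $\frac{{\mathbf L}_1(s)}{\sqrt s}$ being a one–line computation on ${\mathbf L}_1(s)={\mathbf L}(s-1)$ (it is attained at $s=2$). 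Hence $\max\big(\lim_n\|f_n\|_{\cL},\lim_n\|g_n\|_{\cL}\big)=\frac{1}{\sqrt{4\pi}}$. It is worth noting that here the triplets $(\underline\alpha,\underline 0,{\mathbf L})$ and $(\underline\alpha,\underline{x_n},{\mathbf L}_1)$ \emph{are} orthogonal in the sense of Definition \ref{orthogen} — one has $-\log|x_n|/\alpha_n\to a=1$ and ${\mathbf L}_1$ vanishes on $s\le 1$ — so the lemma is precisely designed to show that the pairwise orthogonality of the \emph{scales} assumed in Proposition \ref{sumOrlicz2} cannot be weakened to mere orthogonality of the couples.

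\textbf{The key step.} The crux is that on the small ball $B_n:=B\big(x_n,{\rm e}^{-2\alpha_n}\big)$ both bumps are simultaneously at their maximal height $\sqrt{\alpha_n/2\pi}$, so that $u_n\equiv 2\sqrt{\frac{\alpha_n}{2\pi}}=\sqrt{\frac{2\alpha_n}{\pi}}$ on $B_n$. To see this I would check that for $x\in B_n$ one has $-\log|x-x_n|/\alpha_n\ge 2$, so ${\mathbf L}_1$ is on its plateau and $g_n(x)=\sqrt{\frac{\alpha_n}{2\pi}}$; and $|x|\le|x_n|+|x-x_n|=\frac12{\rm e}^{-\alpha_n}+{\rm e}^{-2\alpha_n}\le{\rm e}^{-\alpha_n}$ for $n$ large, so ${\mathbf L}$ is also on its plateau and $f_n(x)=\sqrt{\frac{\alpha_n}{2\pi}}$. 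This elementary inclusion $B_n\subset\{f_n\equiv g_n\equiv\sqrt{\alpha_n/2\pi}\}$ (for $n$ large) is the only point needing any bookkeeping, and it is where the choice $|x_n|=\frac12{\rm e}^{-\alpha_n}$ and the unit shift in ${\mathbf L}_1$ are used.

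\textbf{Conclusion and the (mild) obstacle.} It then remains to convert the doubling into a lower bound on the norm. Fixing any $\lambda<\frac{1}{\sqrt{2\pi}}$ and keeping only the contribution of $B_n$, whose Lebesgue measure is $\pi{\rm e}^{-4\alpha_n}$, one gets
$$
\int_{\R^2}\Big({\rm e}^{|u_n/\lambda|^2}-1\Big)\,dx\ \ge\ \pi\,{\rm e}^{-4\alpha_n}\Big({\rm e}^{\frac{2\alpha_n}{\pi\lambda^2}}-1\Big)\ =\ \pi\,{\rm e}^{\alpha_n\big(\frac{2}{\pi\lambda^2}-4\big)}-\pi\,{\rm e}^{-4\alpha_n},
$$
and the right-hand side tends to $+\infty$ as $n\to\infty$, since $\lambda^2<\frac{1}{2\pi}$ makes the exponent $\frac{2}{\pi\lambda^2}-4$ positive and $\alpha_n\to\infty$. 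Hence this integral exceeds $\kappa$ for $n$ large, and as the integrand in \eqref{norm} is nonincreasing in $\lambda$, this forces $\|u_n\|_{\cL}\ge\lambda$ for all large $n$, i.e. $\liminf_n\|u_n\|_{\cL}\ge\lambda$; letting $\lambda\uparrow\frac{1}{\sqrt{2\pi}}$ gives $\liminf_n\|u_n\|_{\cL}\ge\frac{1}{\sqrt{2\pi}}>\frac{1}{\sqrt{4\pi}}$, which with the first paragraph establishes \eqref{cex1}. (In fact $\|\nabla u_n\|_{L^2}^2\to 2$ by Proposition \ref{stabi} and $\|u_n\|_{L^2}\to 0$, so \eqref{2D-embed} supplies the matching upper bound and $\|u_n\|_{\cL}\to\frac{1}{\sqrt{2\pi}}$.) There is no genuine obstacle here: the example is built so that the two Moser plateaus overlap, and the only care needed is the elementary comparison of the radii ${\rm e}^{-\alpha_n}$ and ${\rm e}^{-2\alpha_n}$ in the key step.
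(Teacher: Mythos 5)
Your proposal is correct and follows essentially the same route as the paper: you identify that $u_n\equiv 2\sqrt{\alpha_n/2\pi}$ on the small ball $B(x_n,{\rm e}^{-2\alpha_n})$ (using $|x_n|=\frac12{\rm e}^{-\alpha_n}$ so this ball sits inside $\{|x|\le{\rm e}^{-\alpha_n}\}$) and then bound the Orlicz norm from below by restricting the integral in \eqref{norm} to that ball, obtaining $\liminf_n\|u_n\|_{\cL}\ge\frac{1}{\sqrt{2\pi}}>\frac{1}{\sqrt{4\pi}}$, exactly as in the paper's proof. Your explicit verification of the right-hand side via \eqref{behav}, \eqref{profile} and translation invariance, and the parenthetical matching upper bound via Proposition \ref{stabi} and \eqref{2D-embed}, are correct additions but do not change the argument.
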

\begin{proof}[Proof of Lemma \ref{cex}]
Recall that, by definition of ${\mathbf L}$, we have
\begin{eqnarray*}
f_n(x)&=&\; \left\{
\begin{array}{cllll}\sqrt{\frac{\alpha_n}{2\pi}} \quad&\mbox{if}&\quad
|x|\leq{\rm e}^{-\alpha_n},\\\\-\frac{\log|x|}{\sqrt{2\pi\alpha_n}}\quad
&\mbox{if}&\quad {\rm e}^{-\alpha_n}\leq |x|\leq 1 ,\\\\
0 \quad&\mbox{if}&|x|\geq 1,
\end{array}
\right.
\end{eqnarray*}
and
\begin{eqnarray*}
g_n(x)&=&\; \left\{
\begin{array}{cllll}\sqrt{\frac{\alpha_n}{2\pi}} \quad&\mbox{if}&\quad
|x-x_n|\leq{\rm e}^{-2\alpha_n},\\\\-\frac{\log|x-x_n|}{\sqrt{2\pi\alpha_n}}-\sqrt{\frac{\alpha_n}{2\pi}}\quad
&\mbox{if}&\quad {\rm e}^{-2\alpha_n}\leq |x-x_n|\leq {\rm e}^{-\alpha_n},\\\\
0 \quad&\mbox{if}&|x-x_n|\geq {\rm e}^{-\alpha_n}\,.
\end{array}
\right.
\end{eqnarray*}
It follows that, $u_n(x)=2\sqrt{\frac{\alpha_n}{2\pi}} $ for $|x-x_n|\leq {\rm e}^{-2\alpha_n}$. Hence, if
$$
\int_{\R^2}\;\left({\rm e}^{|\frac{u_n(x)}{\lambda}|^2}-1\right)\,dx\leq \kappa,
$$
then
$$
2\pi\int_0^{{\rm e}^{-2\alpha_n}}\,\left({\rm e}^{\frac{2\alpha_n}{\pi\lambda^2}}-1\right)\,r\,dr\leq\kappa,
$$
which implies that
$$
\lambda^2\geq \frac{2\alpha_n}{\pi\log\left(1+\frac{\kappa}{\pi}{\rm e}^{4\alpha_n}\right)}\to \frac{1}{2\pi},
$$
and concludes the proof of the Lemma.
\end{proof}
Let us now go to the proof of Proposition \ref{sumOrlicz2}. To avoid heaviness, we shall restrict ourselves to the example
$h_n(x):=a\,f_{\alpha_n}(x-x_n)+b\,f_{\beta_n}\,(x)$ where  $a, b$ are two real
numbers, $(\alpha_n) \ll (\beta_n)$ are two orthogonal scales and  $(x_n)$ is a core such that
\bq
\label{limit}
-\frac{\log|x_n|}{\alpha_n}\to l\geq 0\,.
\eq
Our purpose is  to show that
$$
\|h_n\|_{\cL}\to \frac{M}{\sqrt{4\pi}}\quad\mbox{as}\quad
n\to\infty,
$$
where $M:=\sup(|a|,|b|)$. Let us  start by proving that
\bq
\label{Inf1}
\liminf_{n\to\infty}\,\|h_n\|_{\cL}\geq\frac{M}{\sqrt{4\pi}}.
\eq
By definition
$$h_n(x) =  b\sqrt{\frac{\beta_n}{2\pi}} + a\,f_{\alpha_n}(x-x_n), \quad
\mbox{if} \quad  |x|\leq {\rm e}^{-\beta_n}\,.$$
Since $0 \leq f_{\alpha_n}(x-x_n) \leq \sqrt{\frac{\alpha_n}{2\pi}}$ and $(\alpha_n) \ll (\beta_n)$, we deduce that for $n$ large enough and $|x|\leq {\rm e}^{-\beta_n}$
$$ |h_n(x)| \geq |b|\sqrt{\frac{\beta_n}{2\pi}}-|a|\sqrt{\frac{\alpha_n}{2\pi}} \,.$$
Therefore, if  $\lambda$ is a positive real such that
\bq
\label{gencond}
\int_{\R^2}\,\left({\rm
e}^{\frac{|h_n(x)|^2}{\lambda^2}}-1\right)\,dx\,\leq\,\kappa\,,
\eq
then
\bq
\label{inf2}
\int_{\{|x|\leq {\rm e}^{-\beta_n}\}}\; \left({\rm
e}^{\frac{\left(|b|\sqrt{\frac{\beta_n}{2\pi}}-|a|\sqrt{\frac{\alpha_n}{2\pi}}\right)^2}{\lambda^2}}-1\right)\,dx\,\leq\,\kappa.
\eq
This implies that
$$
\lambda^2\geq\frac{\left(|b|\sqrt{\frac{\beta_n}{2\pi}}-|a|\sqrt{\frac{\alpha_n}{2\pi}}\right)^2}{2\pi\log\left(1+C{\rm e}^{2\beta_n}\right)}=\frac{b^2}{4\pi}+\circ(1)\,,
$$
and thus
\bq
\label{limitinfb}
\liminf_{n\to\infty}\,\|h_n\|_{\cL}\geq\frac{|b|}{\sqrt{4\pi}}.
\eq
To end the proof of \eqref{Inf1}, it remains to  establish that
\bq
\label{limitinfa}
\liminf_{n\to\infty}\,\|h_n\|_{\cL}\geq\frac{|a|}{\sqrt{4\pi}}.
\eq
For this purpose,  we shall distinguish the case where $ l= \displaystyle\lim_{n\to\infty}\,-\frac{\log|x_n|}{\alpha_n} = 1$  from the one where $l \neq1$. \\

Let us  begin by the most delicate case where $l =1$ and fix
$\delta > 0$. Clearly,  there exists an integer $N$ such that for all $n > N$
\bq
\label{encad}{\rm e}^{- \alpha_n (1+ \delta  )} \leq |x_n| \leq {\rm e}^{- \alpha_n (1- \delta  )}\,.\eq
Now, let us  consider the set
$$E^\delta_n := \{ x\in \R^2; |x -x_n|\leq {\rm e}^{-\alpha_n (1+2\delta)} \,\, \mbox{and  } \quad |x |\geq {\rm e}^{-\beta_n } \}. $$
In view of \eqref{encad}, the reverse triangle inequality implies   that  $ |x | \geq \frac { 1 }{2} {\rm e}^{- \alpha_n (1+ \delta)} $  for all $x \in E^\delta_n$ and $n$ big enough. We deduce that
  \begin{eqnarray*} |h_n(x)| &\geq&  |a|\sqrt{\frac{\alpha_n}{2\pi}}+|b|\frac{\log|x|}{\sqrt{2\pi\beta_n}} \geq  |a|\sqrt{\frac{\alpha_n}{2\pi}}-|b|\frac{\alpha_n (1+ \delta + \circ(1))}{\sqrt{2\pi\beta_n}}\\ & \geq & |a| \sqrt{\frac{\alpha_n}{2\pi}} \big(1-|\circ(1)|\big)\,,\end{eqnarray*}
for all $x \in E^\delta_n$ and $n$ sufficiently large. Consequently,  if   estimate \eqref{gencond} holds, then
\bq
\label{inf1}
\int_{ E^\delta_n }\; \left({\rm
e}^{\frac{\left(|a|\sqrt{\frac{\alpha_n}{2\pi}}\big(1-|\circ(1)|\big)\right)^2}{\lambda^2}}-1\right)\,dx\,\leq\,\kappa,
\eq
which leads to
$$
\lambda^2  \geq \frac{\left(|a|\sqrt{\frac{\alpha_n}{2\pi}}\big(1-|\circ(1)|\big)\right)^2}{\log\left(1+C{\rm e}^{2\alpha_n (1+2\delta)}\right)}=\frac{a^2}{4\pi (1+2\delta)}+\circ(1),
$$
and ensures that
$$
\liminf_{n\to\infty}\,\|h_n\|_{\cL}\geq\frac{1}{\sqrt{1+2\delta}}\,\frac{|a|}{\sqrt{4\pi}}\,.
$$
This achieves the proof of  \eqref{limitinfa} by letting $\delta$ to $0$.\\

The case where $l \neq1$ can be handled in a similar way once we observe   that for n large enough there exists a positive constant $c$ such that $$ |h_n(x)| \geq  |a|\sqrt{\frac{\alpha_n}{2\pi}}+|b|\frac{\log|x|}{\sqrt{2\pi\beta_n}} \geq |a| \sqrt{\frac{\alpha_n}{2\pi}}- c\,|b|\frac{\alpha_n}{\sqrt{2\pi\beta_n}}, $$
for $x \in E_n = \{ x\in \R^2; \, \frac 1 2 \, {\rm e}^{-\alpha_n }  \leq |x -x_n|\leq {\rm e}^{-\alpha_n } \,\, \mbox{and  } \quad |x |\geq {\rm e}^{-\beta_n } \} $. \\

In the general case, we have to replace  \eqref{inf2} and \eqref{inf1} by $\ell$ estimates
of that type. Indeed, assuming that $ \frac{\alpha_n^{(j)}}{ \alpha_n^{(j+1)} }  \to 0 $ when
n goes to infinity for $j = 1, 2, ..., l-1$, we replace \eqref{inf2} and \eqref{inf1}  by the fact that
\bq
\label{inf3}
\int_ {|x -x^{(\ell)}_n|\leq {\rm e}^{-\alpha_n^{(\ell)} }} \; \left({\rm
e}^{\frac{|h_n(x)|^2}{\lambda^2}}-1\right)\,dx\,\leq\,\kappa,
   \eq
and for $j = 1, ..., l-1$
\bq  \label{inf4}
\int_{E^{j,\ell}_n }  \; \left({\rm
e}^{\frac{|h_n(x)|^2}{\lambda^2}}-1\right)\,dx\,\leq\,\kappa,
\eq
in the case where $l=  \displaystyle\lim_{n\to\infty}\,-\frac{\log|x^{(j)}_n|}{\alpha_n} \neq 1$ with
$$ E^{j,\ell}_n = \{ x\in \R^2; \, \frac 1 2 {\rm e}^{-\alpha_n^{(j)} } \leq |x -x^{(j)}_n|\leq {\rm e}^{-\alpha_n^{(j)} } \,\, \mbox{and}\,\,  |x -x^{(j')}_n|\geq {\rm e}^{-\alpha_n^{(j')} }\,\, \mbox{for}\,\, j+1 \leq j'\leq \ell \}\,,  $$
 or
 \bq  \label{inf5}
\int_{E^{j,\ell,\delta}_n }  \; \left({\rm
e}^{\frac{|h_n(x)|^2}{\lambda^2}}-1\right)\,dx\,\leq\,\kappa
\eq
in the case where $l=  \displaystyle\lim_{n\to\infty}\,-\frac{\log|x^{(j)}_n|}{\alpha_n} = 1$ with
$$ E^{j,\ell,\delta}_n = \{ x\in \R^2; \,  |x -x^{(j)}_n|\leq {\rm e}^{- (1+2 \delta)\alpha_n^{(j) } } \,\, \mbox{and}\,\,  |x -x^{(j')}_n|\geq {\rm e}^{-\alpha_n^{(j')} }\,\, \mbox{for}\,\, j+1 \leq j'\leq \ell \}\,. $$

$ $

Our concern now is to prove the second part, that is
\bq
\label{Sup}
\limsup_{n\to\infty}\,\|h_n\|_{\cL}\leq\frac{M}{\sqrt{4\pi}}\,.
\eq
To do so, it is sufficient to show that for any $\eta>0$ small enough and $n$ sufficiently large
\bq
\label{sup1}
\int_{\R^2}\,\left({\rm
e}^{\frac{4\pi-\eta}{M^2}|h_n(x)|^2}-1\right)\,dx\,\leq\,\kappa.
\eq
Actually, we will prove that the left hand side of  \eqref{sup1} goes to zero when
$n$ goes to infinity. For this purpose, write
\begin{eqnarray}
\label{ABC}
\nonumber
\frac{(4\pi-\eta)}{M^2}\,|h_n(x)|^2&=&\frac{4\pi-\eta}{M^2}\,a^2 f_{\alpha_n}(x-x_n)^2+\frac{4\pi-\eta}{M^2}\,b^2f_{\beta_n}(x)^2\\ \nonumber&+&2\frac{4\pi-\eta}{M^2}\,ab\,f_{\alpha_n}(x-x_n)\,f_{\beta_n}(x)\\
&:=&A_n+B_n+C_n.
\end{eqnarray}
The simple observation
\begin{eqnarray*}
 {\rm e}^{x+y+z}-1&=& \left({\rm
e}^{x}-1\right)\left({\rm e}^{y}-1\right)\left({\rm
e}^{z}-1\right)+\left({\rm e}^{x}-1\right)\left({\rm
e}^{y}-1\right)\\ \nonumber&+&\left({\rm e}^{x}-1\right)\left({\rm
e}^{z}-1\right)+\left({\rm e}^{y}-1\right)\left({\rm
e}^{z}-1\right)\\ \nonumber&+&\left({\rm e}^{x}-1\right)+\left({\rm
e}^{y}-1\right)+\left({\rm e}^{z}-1\right),
\end{eqnarray*}
 yields
\begin{eqnarray}
\label{Obs}
\nonumber
\int_{\R^2}\,\left({\rm
e}^{\frac{4\pi-\eta}{M^2}|h_n(x)|^2}-1\right)\,dx\,&=&\,\int\,\left({\rm
e}^{A_n}-1\right)\left({\rm e}^{B_n}-1\right)\left({\rm
e}^{C_n}-1\right)+\int\,\left({\rm
e}^{A_n}-1\right)\left({\rm
e}^{B_n}-1\right)\\&+&\int\,\left({\rm
e}^{A_n}-1\right)\left({\rm
e}^{C_n}-1\right)+\int\,\left({\rm
e}^{B_n}-1\right)\left({\rm
e}^{C_n}-1\right)\\ \nonumber&+&\int\,\left({\rm
e}^{A_n}-1\right)+\int\,\left({\rm
e}^{B_n}-1\right)+\int\,\left({\rm e}^{C_n}-1\right)\,.
\end{eqnarray}
We will demonstrate that each term in the right hand side of \eqref{Obs} tends to zero as $n$ goes to infinity. Let us first observe that, by Trudinger-Moser estimate \eqref{Mos1}, we have for $\varepsilon\geq 0$ small enough
 \bq
 \label{cv1}
 \Big\|{\rm e}^{A_n}-1\Big\|_{L^{1+\varepsilon}}+\Big\|{\rm e}^{B_n}-1\Big\|_{L^{1+\varepsilon}}\to 0\quad\mbox{as}\quad n\to\infty.
 \eq
Concerning the last term in \eqref{Obs}, we infer that for any $\gamma \geq 0$
\bq
\label{lastterm}
\int_{\R^2}\,\left({\rm e}^{\gamma\,f_{\alpha_n}(x-x_n)f_{\beta_n}(x)}-1\right)\,dx\to 0,\quad n\to\infty\,.
\eq
Indeed
\begin{eqnarray}
\int_{\R^2}\,\left({\rm e}^{\gamma\,f_{\alpha_n}(x-x_n)f_{\beta_n}(x)}-1\right)\,dx&=&\int_{|x|\leq{\rm e}^{-\beta_n}}\,\left({\rm e}^{\gamma\,f_{\alpha_n}(x-x_n)f_{\beta_n}(x)}-1\right)\,dx\nonumber\\&+&\int_{{\rm e}^{-\beta_n}\leq |x|\leq 1}\,\left({\rm e}^{\gamma\,f_{\alpha_n}(x-x_n)f_{\beta_n}(x)}-1\right)\,dx\,.
\label{lastterm1}
\end{eqnarray}
The first integral in the right hand side of \eqref{lastterm1} is  dominated by $\pi\,{\rm e}^{-2\beta_n}\,{\rm e}^{\frac{\gamma}{2\pi}\sqrt{\alpha_n\beta_n}}$, which tends to zero as $n$ goes to $\infty$. The second integral can be estimated by
$$
2\pi\int_{{\rm e}^{-\beta_n}}^1\,\left(r^{-\frac{\gamma}{2\pi}\sqrt{\frac{\alpha_n}{\beta_n}}}-1\right)\,r\,dr=2\pi
\left[\frac{1}{2-\frac{\gamma}{2\pi}\sqrt{\frac{\alpha_n}{\beta_n}}}-\frac{1}{2}+\frac{{\rm e}^{-2\beta_n}}{2}-\frac{{\rm e}^{-\beta_n\left(2-\frac{\gamma}{2\pi}\sqrt{\frac{\alpha_n}{\beta_n}}\right)}}{2-\frac{\gamma}{2\pi}\sqrt{\frac{\alpha_n}{\beta_n}}}\right],
$$
which also tends to zero as $n\to\infty$. This gives \eqref{lastterm} as desired.\\

Making use of  H\"older inequality, \eqref{cv1} and \eqref{lastterm}, we get (for $\varepsilon>0$  small enough)
\begin{eqnarray*}
\int\,\left({\rm e}^{A_n}-1\right)\left({\rm
e}^{C_n}-1\right)+\int\,\left({\rm e}^{B_n}-1\right)\left({\rm
e}^{C_n}-1\right)&\leq& \Big\|{\rm
e}^{A_n}-1\Big\|_{L^{1+\varepsilon}}\;\Big\|{\rm
e}^{C_n}-1\Big\|_{L^{1+\frac{1}{\varepsilon}}}\\&+&\Big\|{\rm
e}^{B_n}-1\Big\|_{L^{1+\varepsilon}}\;\Big\|{\rm
e}^{C_n}-1\Big\|_{L^{1+\frac{1}{\varepsilon}}}\to 0\;.
\end{eqnarray*}

Now, we claim that
\bq \label{diff} \int\,\left({\rm e}^{A_n}-1\right)\left({\rm
e}^{B_n}-1\right)\to 0,\;\;n\to\infty\,. \eq

The main difficulty in the proof of \eqref{diff}  comes from the term
$$
\int_{\{\,{\rm e}^{-\beta_n}\leq |x|\leq {\rm e}^{-\alpha_n}\,\}}\;\left({\rm e}^{A_n}-1\right)\left({\rm
e}^{B_n}-1\right)\,\lesssim{\rm e}^{\frac{(4\pi-\eta)a^2\alpha_n}{2\pi M^2}}\,\int_{{\rm e}^{-\beta_n}}^{{\rm e}^{-\alpha_n}}\;{\rm e}^{\frac{(4\pi-\eta)b^2}{2\pi\beta_n M^2}\log^2 r}\,r\,dr,
$$
where we have used the simple fact that $\left(f_{\alpha_n}(x-x_n)\right)^2\leq \frac{\alpha_n}{2\pi}$. The fact that $${\rm e}^{\frac{(4\pi-\eta)a^2\alpha_n}{2\pi M^2}}\,\int_{{\rm e}^{-\beta_n}}^{{\rm e}^{-\alpha_n}}\;{\rm e}^{\frac{(4\pi-\eta)b^2}{2\pi\beta_n M^2}\log^2 r}\,r\,dr \to 0, \quad \mbox{as} \quad n \to \infty$$
is an immediate consequence of the following variant of Lemma 2.8. proved in \cite{BMM}.
\begin{lem}
\label{variantJFA}
Let $(\alpha_n)\ll(\beta_n)$ two orthogonal scales, and $0<p,q<2$ two real numbers. Set
$$
{\mathbf K}_n={\rm e}^{p\alpha_n}\,\int_{{\rm e}^{-\beta_n}}^{{\rm e}^{-\alpha_n}}\;{\rm e}^{q\frac{\log^2 r}{\beta_n}}\,r\,dr\,.
$$
Then ${\mathbf K}_n\to 0$ as $n\to\infty$.
\end{lem}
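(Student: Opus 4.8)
The plan is to reduce the integral to a Laplace-type integral by the substitution $r = {\rm e}^{-\beta_n s}$. Writing $t_n := \alpha_n/\beta_n$, which tends to $0$ since $(\alpha_n)\ll(\beta_n)$, and using ${\rm e}^{p\alpha_n} = {\rm e}^{p\beta_n t_n}$, one obtains
$$
{\mathbf K}_n = \beta_n \int_{t_n}^{1} {\rm e}^{\beta_n \phi_n(s)}\,ds, \qquad \phi_n(s) := qs^2 - 2s + p\,t_n .
$$
Everything then comes down to locating where $\phi_n$ is largest on $[t_n,1]$ and to controlling how fast it decays away from that point. Heuristically the maximum sits at the left endpoint $s = t_n$ (for $n$ large, since $q<2$), and there $\phi_n(t_n) = q t_n^2 + (p-2)t_n$ is negative and, after multiplication by $\beta_n$, of order $(p-2)\alpha_n \to -\infty$; but a crude sup bound alone will not suffice, as explained below.

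Next I would make the endpoint analysis quantitative. The elementary identity
$$
qs^2 - 2s - \bigl(q t_n^2 - 2 t_n\bigr) = (s - t_n)\bigl(q(s+t_n) - 2\bigr)
$$
shows that, because $q<2$ and $t_n\to 0$, there is an absolute constant $c_0 := (2-q)/2 > 0$ with $q(s+t_n) - 2 \le -c_0$ for every $s\in[t_n,1]$ once $n$ is large enough; hence
$$
\phi_n(s) \le \phi_n(t_n) - c_0\,(s - t_n), \qquad s \in [t_n,1].
$$
This linear upper bound for $\phi_n$ is the technical heart of the argument.

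The decisive step — and the one I expect to be the genuine obstacle — is that one must \emph{not} simply bound the integrand by ${\rm e}^{\beta_n\phi_n(t_n)}$: that would only give ${\mathbf K}_n \lesssim \beta_n\,{\rm e}^{\beta_n \phi_n(t_n)}$, which is useless because orthogonality of the scales allows $\beta_n$ to grow arbitrarily faster than any power of $\alpha_n$, so the prefactor $\beta_n$ cannot be absorbed. Instead one keeps the genuine exponential decay and integrates it:
$$
{\mathbf K}_n \le \beta_n\,{\rm e}^{\beta_n \phi_n(t_n)}\int_{t_n}^{\infty} {\rm e}^{-c_0\beta_n (s-t_n)}\,ds = \frac{1}{c_0}\,{\rm e}^{\beta_n \phi_n(t_n)},
$$
so the dangerous factor $\beta_n$ is exactly cancelled by the factor $1/(c_0\beta_n)$ produced by the integration. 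Finally,
$$
\beta_n \phi_n(t_n) = q\,\frac{\alpha_n^2}{\beta_n} + (p-2)\,\alpha_n = \alpha_n\bigl(q\,t_n + p - 2\bigr),
$$
and since $p<2$ and $t_n\to 0$ this is $\le \frac{p-2}{2}\,\alpha_n \to -\infty$ for $n$ large; therefore ${\mathbf K}_n \le \frac{1}{c_0}\,{\rm e}^{(p-2)\alpha_n/2} \to 0$, which is the claim. The routine points left to check are that $t_n<1$ and $t_n$ is small enough to validate the two threshold inequalities above, both immediate from $t_n\to 0$.
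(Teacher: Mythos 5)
Your argument is correct and complete: the substitution $r={\rm e}^{-\beta_n s}$, the factorization $qs^2-2s-(qt_n^2-2t_n)=(s-t_n)\bigl(q(s+t_n)-2\bigr)$ giving the linear decay bound with $c_0=(2-q)/2$, and the integration of the exponential tail (which exactly cancels the prefactor $\beta_n$, as you rightly note a crude sup bound cannot do, since $\beta_n$ may grow arbitrarily fast relative to $\alpha_n$) all check out, and the final bound ${\mathbf K}_n\le c_0^{-1}{\rm e}^{(p-2)\alpha_n/2}$ indeed tends to zero. The paper itself gives no proof of this lemma, referring instead to a variant of Lemma 2.8 in \cite{BMM}, so your Laplace-type endpoint estimate supplies a valid self-contained justification that can stand in place of that citation.
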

Finally, since for  $\varepsilon\geq 0$  small enough \eqref{diff}  holds with $A_n$ and $B_n$  replaced by $(1+\varepsilon)A_n$ and $(1+\varepsilon)B_n$,
 the first term in \eqref{Obs} can be treated by the use of H\"older inequality, \eqref{lastterm} and \eqref{diff}.\\

 Consequently, we obtain
$$
\limsup_{n\to\infty}\,\|h_n\|_{\cL}\leq\frac{M}{\sqrt{4\pi}}\,,
$$
which ensures the result. \\

In the general case we replace \eqref{ABC}, by $\ell + \frac{\ell(\ell-1)}{2}$ terms and the rest of the proof is very similar. This completes the proof of Proposition \ref{sumOrlicz2}.
%@@@@@@@@@@@@@@@@@@@@@@@@@@@@@@@@@@@@%@@@@@@@@@@@@@@@@@@@@@@@@@@@@@@@@@@@@%@@@@@@@@

%@@@@@@@@@@@@@@@@@@@@@@@@@@@@@@@@@@@@%@@@@@@@@@@@@@@@@@@@@@@@@@@@@@@@@@@@@%@@@@@@@@

\subsection{Appendix C: Rearrangement of functions and capacity notion}
\label{reacap}
In this appendix we shall sketch in the briefest possible way all useful, known results  on  rearrangement of functions and capacity notion which are used in this article. \\

We start with an overview of the rearrangement of functions. This topic mixes geometry and integration in an essential way. It consists in associating to any measurable function vanishing at infinity, a nonnegative decreasing radially symmetric function. This process minimizes energy, preserves Lebesgue norms, and possesses some other useful and interesting properties that will be given later.\\

To begin with, let us first define the symmetric rearrangement of a measurable set.
\begin{defi}
\label{rearset}
Let $A\subset\R^d$ be a Borel set of finite Lebesgue measure. We define $A^*$, the symmetric rearrangement of $A$, to be the open ball centered at the origin whose volume is that of $A$. Thus,
$$A^*=\{x\,:\, |x|<R\}\quad\mbox{with}\quad\left(|\mathbb{S}^{d-1}|/d\right)R^d=|A|\,,
$$
where $|\mathbb{S}^{d-1}|$ is the surface area of the unit sphere $\mathbb{S}^{d-1}$.

\end{defi}
This definition allows us to define in an obvious way the symmetric-decreasing rearrangement of a characteristic function of a set, namely
$$
\chi^*_{A}:=\chi_{A^*}\,.
$$
To define  the rearrangement of a measurable function $f :\R^d\to \R$, we make use of  the {\it layer cake representation} which is a simple application of Fubini's theorem
$$
|f(x)|=\int_0^\infty\,\chi_{\{|f|>t\}}(x)\,dt\,.
$$
More precisely \begin{defi}
\label{rearfunc}
Let $f :\R^d\to \C$ be a measurable function vanishing at infinity {\rm i.e.}
$$
\forall\;t>0,\qquad\; \Big|\{x\,:\,|f(x)|>t\}\Big|<\infty\,.
$$
We define the symmetric decreasing rearrangement, $f^*$, of $f$ as
\bq
\label{Rear}
f^*(x)=\int_0^\infty\,\chi^*_{\{|f|>t\}}(x)\,dt\,.
\eq
\end{defi}
In the following proposition,  we collect  without proofs  the main features of the rearrangement $f^*$ (for a
complete presentation and more details, we refer the reader to \cite{Kes, Analysis,  Talenti} and the references therein).
\begin{prop}
\label{rearr}
Under the above notation, the following properties hold:
\quad\\
\noindent$\bullet$ The rearrangement $f^*$ of a function $f$ is a nonnegative radially symmetric and nonincreasing function.\\
\noindent$\bullet$ The level sets of $f^*$ are the rearrangements of the level sets of $|f|$, {\rm i.e.},
$$
\{x\,:\,f^*(x)>t\}=\{x\,:\,|f(x)|>t\}^*\,.
$$
In particular, we have
\begin{equation}
\label{meslev}
\Big|\{x\,:\,|f(x)|>t\}\Big|=\Big|\{x\,:\,f^*(x)>t\}\Big|\,,
\end{equation}
where~$|\cdot |$ denotes the Lebesgue measure. \\
\noindent$\bullet$ If $\Phi : \R\to \R$ is non-decreasing, then
\bq
\label{rearang}
\left(\Phi(f)\right)^*=\Phi(f^*).
\eq
\noindent$\bullet$  If $\Phi$ is a Young function, {\rm i.e.}, $\Phi :[0,\infty[\to[0,\infty[$, $\Phi(0)=0$, $\Phi$ is increasing and convex. Then
$$
\int_{\R^d}\,\Phi\left(|\nabla u(x)|\right)\,dx\geq \int_{\R^d}\,\Phi\left(|\nabla u^*(x)|\right)\,dx\,.
$$
\end{prop}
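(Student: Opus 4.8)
The first four assertions are essentially tautological once one has the layer--cake formula, and I would dispatch them quickly. For $t\in\,]0,\infty[$ the sets $\{|f|>t\}^{*}$ form a nested family of balls centered at the origin whose radii $r(t)$ are nonincreasing and right--continuous in $t$; hence $f^{*}(x)=\int_{0}^{\infty}\chi_{\{|f|>t\}^{*}}(x)\,dt$ is the length of the interval $\{t>0:\,|x|<r(t)\}$, from which it is clear that $f^{*}$ is nonnegative, radial, and radially nonincreasing, and that $\{f^{*}>t\}$ coincides with $\{|f|>t\}^{*}$ up to a Lebesgue--null set (the corresponding sphere). Since the symmetric rearrangement of a Borel set preserves its measure, this gives \eqref{meslev}. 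Finally, if $\Phi$ is nondecreasing then $\Phi(f)$ and $\Phi(f^{*})$ are equimeasurable (because $f$ and $f^{*}$ are, and $\Phi$ is nondecreasing), while $\Phi(f^{*})$ is radially nonincreasing; as the symmetric decreasing rearrangement is the unique radially nonincreasing function equimeasurable with $\Phi(f)$, we obtain $(\Phi(f))^{*}=\Phi(f^{*})$, which is \eqref{rearang}.

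The substantial point is the P\'olya--Szeg\"o inequality $\int_{\R^{d}}\Phi(|\nabla u|)\,dx\geq\int_{\R^{d}}\Phi(|\nabla u^{*}|)\,dx$, and here is the route I would follow. First reduce to $u\geq 0$ (using $|\nabla|u||=|\nabla u|$ a.e.\ and $(|u|)^{*}=u^{*}$) and then to $u\in C_{c}^{\infty}(\R^{d})$, $u\geq 0$, by an approximation argument revisited below. For such $u$ put $\mu(t)=|\{u>t\}|$ and $P(t)=\mathcal{H}^{d-1}(\{u=t\})$. By Sard's theorem a.e.\ $t$ is a regular value, and the coarea formula gives, for any Borel $g\geq 0$,
\[
\int_{\R^{d}}g(x)\,|\nabla u(x)|\,dx=\int_{0}^{\infty}\!\!\int_{\{u=t\}}g\,d\mathcal{H}^{d-1}\,dt,\qquad -\mu'(t)=\int_{\{u=t\}}\frac{d\mathcal{H}^{d-1}}{|\nabla u|}\,.
\]
Applying the first identity with $g=\Phi(|\nabla u|)/|\nabla u|$ on $\{|\nabla u|\neq 0\}$ (the set $\{|\nabla u|=0\}$ contributes only $\Phi(0)=0$), then Jensen's inequality for the convex function $\Phi$ with respect to the probability measure $d\nu_{t}:=\frac{1}{-\mu'(t)}\frac{d\mathcal{H}^{d-1}}{|\nabla u|}$ on $\{u=t\}$, and noting $\int|\nabla u|\,d\nu_{t}=P(t)/(-\mu'(t))$, I get
\[
\int_{\R^{d}}\Phi(|\nabla u|)\,dx=\int_{0}^{\infty}(-\mu'(t))\!\left(\int\Phi(|\nabla u|)\,d\nu_{t}\right)dt\;\geq\;\int_{0}^{\infty}(-\mu'(t))\,\Phi\!\left(\frac{P(t)}{-\mu'(t)}\right)dt\,.
\]

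Running the same computation for $u^{*}$ is easier, because $u^{*}$ has constant gradient modulus on each regular level set $\{u^{*}=t\}$, a sphere of radius $\rho(t)$ with $(|\mathbb{S}^{d-1}|/d)\rho(t)^{d}=\mu(t)$; writing $P^{*}(t)=\mathcal{H}^{d-1}(\{u^{*}=t\})$ one finds $|\nabla u^{*}|=P^{*}(t)/(-\mu'(t))$ there, whence $\int_{\R^{d}}\Phi(|\nabla u^{*}|)\,dx=\int_{0}^{\infty}(-\mu'(t))\,\Phi(P^{*}(t)/(-\mu'(t)))\,dt$. The inequality then follows from the classical isoperimetric inequality $P(t)\geq P^{*}(t)$ (equality being attained by the ball $\{u^{*}>t\}$, which has the same volume as $\{u>t\}$) together with the monotonicity of $\Phi$.

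The main obstacle is not the chain of inequalities above but its rigorous justification for merely Sobolev functions (here $H^{1}$, as needed in the application): the validity of the coarea formula, the identity $-\mu'(t)=\int_{\{u=t\}}|\nabla u|^{-1}\,d\mathcal{H}^{d-1}$, and the negligibility of the critical set all require care, and one must then upgrade from $u\in C_{c}^{\infty}$ to the general case. For the latter I would approximate $u$ in $W^{1,\Phi}$ (or $H^{1}$) by $u_{k}\in C_{c}^{\infty}$, use that $v\mapsto v^{*}$ is a contraction on every $L^{p}$ so that $u_{k}^{*}\to u^{*}$ in $L^{p}$, extract a subsequence with $\nabla u_{k}^{*}\rightharpoonup\nabla u^{*}$ weakly (the needed $L^{2}$ bound on $\nabla u_{k}^{*}$ being precisely the smooth case of the inequality applied to $u_{k}$), and conclude by weak lower semicontinuity of the convex functional $v\mapsto\int\Phi(|\nabla v|)$. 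I expect this measure--theoretic and approximation bookkeeping, rather than the inequality itself, to be where the real work lies.
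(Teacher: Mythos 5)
The paper does not prove this proposition at all: it is explicitly stated ``without proofs'' as a collection of classical facts, with the reader referred to the cited monographs of Kesavan, Lieb--Loss and Talenti. So there is no internal argument to compare with; what you propose is essentially the standard proof from those references (layer--cake for the first three bullets, Talenti's coarea--Jensen--isoperimetry scheme for the P\'olya--Szeg\"o inequality), and it is correct in outline. Two small sharpenings in the easy part: the identity $\{f^*>t\}=\{|f|>t\}^*$ is exact, not merely up to a sphere, because $\mu(t)=|\{|f|>t\}|$ and hence $r(t)$ are right--continuous, so $f^*(x)=\sup\{s:\,r(s)>|x|\}$ and $f^*(x)>t\iff |x|<r(t)$; and for \eqref{rearang} your ``uniqueness of the equimeasurable radial nonincreasing function'' argument should invoke the normalization built into the definition (both sides have open balls as level sets, hence are lower semicontinuous), and is literally true under the normalization in which the paper uses it ($f\ge 0$, $\Phi$ nondecreasing with $\Phi(0)=0$, namely $\Phi=\Theta_a^M$ applied to $u_n^*$); for signed $f$ or $\Phi(0)\neq 0$ the left side, defined through $|\Phi(f)|$, need not equal $\Phi(f^*)$.

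In the P\'olya--Szeg\"o part the skeleton is right, but three steps you state as identities or as routine need the care you yourself anticipate. First, $-\mu'(t)=\int_{\{u=t\}}|\nabla u|^{-1}\,d\mathcal{H}^{d-1}$ holds only a.e.\ (Sard plus the fact that the singular part of the monotone function $\mu$ has zero derivative a.e.); in general one only has $-\mu'(t)\ge \int_{\{u=t\}}|\nabla u|^{-1}\,d\mathcal{H}^{d-1}$, and it is worth noting that this inequality suffices because $s\mapsto s\,\Phi(c/s)$ is nonincreasing for convex $\Phi$ with $\Phi(0)=0$, so replacing the coarea integral by $-\mu'(t)$ only lowers your bound. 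Second, the computation for $u^*$ should be recorded as $\int\Phi(|\nabla u^*|)\,dx\le\int_0^\infty(-\mu'(t))\,\Phi\bigl(P^*(t)/(-\mu'(t))\bigr)dt$ (equality needs absolute continuity of $\mu$), which is again the favorable direction. Third, in the approximation step, besides the weak lower semicontinuity you invoke you also need $\limsup_k\int\Phi(|\nabla u_k|)\le\int\Phi(|\nabla u|)$ for your approximating sequence, which is not automatic from $H^1$ convergence for a general Young function; mollification together with Jensen, $\Phi(|\nabla(\rho_\varepsilon*u)|)\le\rho_\varepsilon*\Phi(|\nabla u|)$, plus the usual cut--off, delivers it, and your $L^2$ bound on $\nabla u_k^*$ indeed rests on the quadratic case of the smooth inequality, which is legitimate. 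None of this changes the verdict: your route is the one taken in the literature the paper cites, and for the paper's actual needs only the cases $\Phi(t)=t^p$ (Proposition \ref{norminvar}) and the first three bullets are used.
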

In particular, we derive from Proposition \ref{rearr} that the process of Schwarz symmetrization minimize the energy and preserves Lebesgue and Orlicz norms:
\begin{prop}
\label{norminvar}
Let  $f\in H^1(\R^2)$ then
\beqn
\|\nabla f\|_{L^2}&\geq &\|\nabla f^*\|_{L^2},\\
\|f\|_{L^p}&=&\|f^*\|_{L^p},\\
\|f\|_{\cL}&=&\|f^*\|_{\cL}\,.
\eeqn
\end{prop}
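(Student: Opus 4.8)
The plan is to derive all three statements from Proposition \ref{rearr}, which already packages the substantive facts: equimeasurability of the level sets of $f$ and $f^*$ (see \eqref{meslev}), the composition identity \eqref{rearang} for non-decreasing $\Phi$, and the P\'olya--Szeg\H{o}-type inequality for Young functions. Since that proposition does the hard analytic work, the only delicate points will be checking that the particular functions $t\mapsto t^2$ and $t\mapsto\phi(t/\lambda)$ meet the hypotheses under which those statements are made, and upgrading the pointwise identities of the form $\bigl(\Phi(|f|)\bigr)^*=\Phi(f^*)$ to equalities of integrals via equimeasurability.

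For the gradient inequality I would apply the last bullet of Proposition \ref{rearr} with the Young function $\Phi(t)=t^2$ (which satisfies $\Phi(0)=0$ and is increasing and convex on $[0,\infty[$): this gives $\int_{\R^2}|\nabla f|^2\,dx\geq\int_{\R^2}|\nabla f^*|^2\,dx$ directly, so in particular $f^*\in H^1(\R^2)$ and $\|\nabla f\|_{L^2}\geq\|\nabla f^*\|_{L^2}$ after taking square roots. For the Lebesgue norms, I would use that the rearrangement depends only on $|f|$, so $|f|^*=f^*$, and then \eqref{rearang} with $\Phi(t)=t^p$ gives $(|f|^p)^*=(f^*)^p$; integrating and invoking that the symmetric decreasing rearrangement of a nonnegative function preserves its integral --- an immediate consequence of \eqref{meslev} through the layer-cake identity $\int_{\R^2}\Psi\,dx=\int_0^\infty|\{\Psi>s\}|\,ds$ --- yields $\|f\|_{L^p}=\|f^*\|_{L^p}$.

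For the Orlicz norm, I would fix $\lambda>0$ and apply \eqref{rearang} to the non-decreasing function $t\mapsto\phi(t/\lambda)$, with $\phi(s)={\rm e}^{s^2}-1$, to get $\bigl(\phi(|f|/\lambda)\bigr)^*=\phi(f^*/\lambda)$; integrating as above gives
\[
\int_{\R^2}\phi\!\left(\frac{|f(x)|}{\lambda}\right)dx=\int_{\R^2}\phi\!\left(\frac{f^*(x)}{\lambda}\right)dx .
\]
Consequently the two admissible sets $\{\lambda>0:\ \int_{\R^2}\phi(|f|/\lambda)\,dx\leq\kappa\}$ and $\{\lambda>0:\ \int_{\R^2}\phi(f^*/\lambda)\,dx\leq\kappa\}$ coincide, hence so do their infima, and $\|f\|_{\cL}=\|f^*\|_{\cL}$. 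I do not expect a genuine obstacle here: essentially everything reduces to the two quoted properties of rearrangement, the P\'olya--Szeg\H{o} inequality being the only non-elementary ingredient and already available from Proposition \ref{rearr}.
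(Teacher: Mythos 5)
Your proposal is correct and follows exactly the route the paper itself takes: Proposition \ref{norminvar} is stated there as a direct consequence of Proposition \ref{rearr}, and your choices of $\Phi(t)=t^2$, $\Phi(t)=t^p$, and $\Phi(t)=\phi(t/\lambda)$ together with equimeasurability and the layer-cake identity are precisely the missing details, correctly handled (including the observation that $|f|^*=f^*$ and that the admissible sets of $\lambda$ coincide). No gap to report.
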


Let us now focus on the notion  of the (electrostatic) capacity giving the elements used along this paper (a detailed exposition on the subject can be found in \cite{Henrot} for example). Before moving on the useful results, let us recall the basic definitions.
\begin{defi}
Let $A\subset\R^d$. The capacity of $A$ is defined by
\bq
\label{capa}
\mbox{\tt Cap}(A)=\inf\Big\{\,\|v\|_{H^1(\R^d)}^2\,;\,v\geq 1 \;\mbox{{\rm a.e.} in a neighborhood of}\, A\,\Big\}\,.
\eq
\end{defi}
In some cases, we need to use the relative capacity defined as follows:
\begin{defi}
\label{relativeCap}
Let $\Omega$ be an open bounded set of $\R^d$. For any subset $A$ of $\Omega$, we define the capacity of $A$ with respect to $\Omega$ by
\bq
\label{relCap}
\mbox{\tt Cap}_\Omega(A)=\inf\Big\{\,\int_\Omega|\nabla v|^2\,;\, v\in H^1_0(\Omega)\,,\,v\geq 1 \;\mbox{{\rm a.e.} in a neighborhood of}\, A\,\Big\}\,.
\eq
\end{defi}
To handle in a practical way the relative capacity, we will resort for $A\subset \Omega$ to the following closed convex subset of $H^1_0(\Omega)$:
$$
\Gamma_A=\Big\{\,v\in H^1_0(\Omega)\,;\;\exists\; v_n\to v,\;v_n\geq 1 \;\mbox{{\rm a.e.} in a neighborhood of}\, A\,\Big\}\,.
$$
This set involves in  the calculation of capacity through the capacitor potential defined as follows:
\begin{defi}
Suppose that $\Gamma_A$ is non empty. The capacitor potential of $A$, $u_A$, is the projection of $0$ on $\Gamma_A$ with respect to $H^1_0(\Omega)$ norm, namely
$$
\int_\Omega|\nabla u_A|^2=\inf\Big\{\,\int_\Omega|\nabla v|^2\;;\;v\in\Gamma_A\Big\}\,.
$$
\end{defi}
The following theorem allows to compute $u_A$ and $\mbox{\tt Cap}_\Omega(A)$.
\begin{thm}
\label{capac}
Let $A\subset \Omega$. Under the above notation, we have\\
\noindent$\bullet$ $\mbox{\tt Cap}_\Omega(A)=\int_\Omega|\nabla u_A|^2$, if $\Gamma_A\neq\emptyset$. \\
\noindent$\bullet$ $u_A$ is harmonic in $\Omega\backslash\bar{A}$.
\end{thm}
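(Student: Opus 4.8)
The plan is to deduce both assertions from the Hilbert space structure of $H^1_0(\Omega)$, equipped with the Dirichlet inner product $\langle u,v\rangle:=\int_\Omega\nabla u\cdot\nabla v$; on the bounded open set $\Omega$ the Poincaré inequality makes this inner product equivalent to the usual one on $H^1_0(\Omega)$, and with this choice $\|v\|^2=\int_\Omega|\nabla v|^2$. In particular the capacitor potential $u_A$ is precisely the metric projection of $0$ onto the closed convex set $\Gamma_A$, so that $\|u_A\|$ equals the distance from $0$ to $\Gamma_A$. Throughout I work under the standing hypothesis $\Gamma_A\neq\emptyset$, which is already implicit in the definition of $u_A$.

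For the first point, I would introduce the admissible class $\cA:=\{v\in H^1_0(\Omega);\ v\geq 1\ \text{a.e. in a neighborhood of }A\}$ appearing in Definition \ref{relativeCap}. This set is convex, and $\Gamma_A$ is by definition its closure in $H^1_0(\Omega)$. Since $v\mapsto\int_\Omega|\nabla v|^2=\|v\|^2$ is continuous for the Dirichlet norm, its infimum over $\cA$ coincides with its infimum over $\overline{\cA}=\Gamma_A$, whence $\mbox{\tt Cap}_\Omega(A)=\inf_{v\in\Gamma_A}\int_\Omega|\nabla v|^2$. As $\Gamma_A$ is nonempty, closed and convex in the Hilbert space $H^1_0(\Omega)$, the projection theorem furnishes a unique element of minimal norm, which is $u_A$ by definition; hence $\int_\Omega|\nabla u_A|^2=\inf_{v\in\Gamma_A}\int_\Omega|\nabla v|^2=\mbox{\tt Cap}_\Omega(A)$.

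For the second point, I would exploit the minimality of $u_A$ against perturbations supported away from $\bar A$. Fix $\varphi\in\cD(\Omega\backslash\bar A)$; since the support of $\varphi$ is a compact subset of the open set $\Omega\backslash\bar A$, the function $\varphi$ vanishes in a neighborhood of $\bar A$, hence of $A$. If $(v_n)\subset\cA$ converges to $u_A$, then $v_n+t\varphi\in\cA$ for every $t\in\R$ and $v_n+t\varphi\to u_A+t\varphi$, so that $u_A+t\varphi\in\Gamma_A$. Since $u_A$ is the element of minimal norm of $\Gamma_A$, the quadratic $t\mapsto\int_\Omega|\nabla(u_A+t\varphi)|^2=\|u_A\|^2+2t\langle u_A,\varphi\rangle+t^2\|\varphi\|^2$ is minimized at $t=0$, which forces $\langle u_A,\varphi\rangle=\int_\Omega\nabla u_A\cdot\nabla\varphi=0$. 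Letting $\varphi$ range over $\cD(\Omega\backslash\bar A)$ yields $\Delta u_A=0$ in $\cD'(\Omega\backslash\bar A)$, and Weyl's lemma upgrades $u_A$ to a genuine harmonic, in particular $C^\infty$, function on $\Omega\backslash\bar A$.

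The only step that requires real care is the identification $\Gamma_A=\overline{\cA}$ together with the stability of the capacity infimum under this passage to the closure — that is, the fact that an infimum defined through the non-closed class $\cA$ is actually attained on $\Gamma_A$. Once this is in place, the first assertion is immediate from the Hilbert space projection theorem and the second from the elementary one-variable variational computation above combined with Weyl's regularity lemma; no compactness argument is involved, which is what makes the relative capacity such a convenient tool elsewhere in the paper.
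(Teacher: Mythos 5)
Your proof is correct and is essentially the standard argument: the paper itself states Theorem \ref{capac} without proof, referring to \cite{Henrot}, and the classical proof there proceeds exactly as you do — the infimum defining $\mbox{\tt Cap}_\Omega(A)$ passes to the closure $\Gamma_A$ by continuity of the Dirichlet energy, the projection theorem in $H^1_0(\Omega)$ (with the Dirichlet norm, equivalent by Poincar\'e) identifies $u_A$ and gives the first bullet, and perturbing by $\pm t\varphi$ with $\varphi\in{\cD}(\Omega\backslash\bar A)$ (which indeed leaves $\Gamma_A$ stable, since such $\varphi$ vanishes near $A$) yields $\int_\Omega\nabla u_A\cdot\nabla\varphi=0$, hence harmonicity via Weyl's lemma. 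Your worry about $\Gamma_A=\overline{\cA}$ is harmless, since the paper defines $\Gamma_A$ precisely as the sequential $H^1_0$-closure of the admissible class.
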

As an application, we deduce the capacity of  balls.
\begin{prop} \label{conssym}
Let $0<a<b$. Then
\bq
\label{capBall}
\mbox{\tt Cap}_{B(b)}\left(B(a)\right)=\frac{2\pi}{\log\left(\frac{b}{a}\right)}\,,
\eq
where $B(r)$ denotes the ball of $\R^2$ centered at the origin and of radius $r$.
\end{prop}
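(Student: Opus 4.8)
The plan is to identify the capacitor potential $u_{B(a)}$ explicitly and then invoke the first part of Theorem \ref{capac}, namely $\mbox{\tt Cap}_{B(b)}(B(a)) = \int_{B(b)} |\nabla u_{B(a)}|^2$. First I would note that $\Gamma_{B(a)}\neq\emptyset$: the radial function equal to $1$ on $B(a)$, equal to $0$ near $\partial B(b)$ and interpolating smoothly in between lies in $H^1_0(B(b))\cap\Gamma_{B(a)}$. Hence the capacitor potential $u:=u_{B(a)}$ is well defined, and by the second part of Theorem \ref{capac} it is harmonic on the annulus $B(b)\setminus\overline{B(a)}$.

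Next I would pin down $u$ exactly. Since the data of the problem (the balls $B(a)$, $B(b)$ and the $H^1_0$ norm) are rotation invariant, $u$ is radial; write $u(x)=\varphi(|x|)$. On the annulus $a<|x|<b$ harmonicity gives $\varphi''(r)+\frac1r\varphi'(r)=0$, so $\varphi(r)=c_1+c_2\log r$. The condition $u\in H^1_0(B(b))$ forces $\varphi(b)=0$, while $u\in\Gamma_{B(a)}$ together with harmonicity up to $\partial B(a)$ forces $\varphi(a)=1$ (taking $\varphi\equiv1$ on $B(a)$ is the energy-minimizing choice, any admissible competitor being $\geq1$ near $B(a)$). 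Solving the two linear equations,
\[
u(x)=\begin{cases} 1 & \text{if } |x|\le a,\\[4pt] \dfrac{\log(b/|x|)}{\log(b/a)} & \text{if } a\le |x|\le b.\end{cases}
\]
Then I would compute the Dirichlet integral: $\nabla u=0$ on $B(a)$, while on the annulus $\nabla u(x)=-\frac{1}{\log(b/a)}\frac{x}{|x|^2}$, so $|\nabla u(x)|^2=\frac{1}{(\log(b/a))^2|x|^2}$, and in polar coordinates
\[
\int_{B(b)}|\nabla u|^2\,dx=\frac{1}{(\log(b/a))^2}\int_0^{2\pi}\!\!\int_a^b\frac{1}{r^2}\,r\,dr\,d\theta=\frac{2\pi}{(\log(b/a))^2}\int_a^b\frac{dr}{r}=\frac{2\pi}{\log(b/a)},
\]
which is \eqref{capBall}.

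The one point requiring genuine care — as opposed to routine computation — is the justification that the capacitor potential really is the explicit radial function above, i.e. that the harmonic extension with boundary values $1$ on $\partial B(a)$ and $0$ on $\partial B(b)$ minimizes $\int_{B(b)}|\nabla v|^2$ over $\Gamma_{B(a)}$. This is the classical Dirichlet principle: writing a competitor $v\in\Gamma_{B(a)}$ as $v=u+w$, one uses that $u$ is harmonic on $B(b)\setminus\overline{B(a)}$, constant on $B(a)$, and that $w$ has the appropriate sign/vanishing properties on $\partial B(a)$ and lies in $H^1_0(B(b))$, to get $\int\nabla u\cdot\nabla w\geq0$ and hence $\int|\nabla v|^2\geq\int|\nabla u|^2$; uniqueness of the minimizer then forces $u$ to be the displayed function. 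Once this identification is in place, the elementary integral above finishes the proof.
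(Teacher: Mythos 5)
Your proof is correct and follows essentially the same route as the paper: both identify the capacitor potential as the radial harmonic function $\log(b/|x|)/\log(b/a)$ on the annulus via Theorem \ref{capac}, and then compute its Dirichlet integral to obtain $2\pi/\log(b/a)$. The only difference is that you spell out (via the Dirichlet principle) why this explicit function is indeed the minimizer, a point the paper simply asserts.
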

\begin{proof}
According to Theorem \ref{capac}, we have
$$
\mbox{\tt Cap}_{B(b)}\left(B(a)\right)=\int_{|x|<b}\,|\nabla u_a(x)|^2\,dx,
$$
where $u_a$ solves the following problem
\begin{eqnarray*}
\left\{
\begin{array}{cllll}\Delta\,u_a=0 \quad&\mbox{if}&\quad
a<|x|<b,\\ u_a=0\quad
&\mbox{if}&\quad |x|=b ,\\
u_a=1 \quad&\mbox{if}&\quad |x|<a.
\end{array}
\right.
\end{eqnarray*}
Straightforward computation yields to
$$
u_a(x)=-\frac{\log\left(\frac{|x|}{b}\right)}{\log\left(\frac{b}{a}\right)}\qquad a<|x|<b\,,
$$
which ensures the result.
\end{proof}
The following lemma shows that the  example by Moser $f_{\alpha}$ is the minimum energy function  which is equal to the value $\sqrt{\frac{\alpha}{2\pi}}$ on the ball $B(0, {\rm e}^{- \alpha})$ and which vanishes outside the unit ball.
\begin{lem}
\label{minEner}
Let $\alpha>0$ and set
$$
K_\alpha:=\Big\{\, u\in H^1_{0,rad}(B(1));\quad u(x)\geq \sqrt{\frac{\alpha}{2\pi}}\quad\mbox{if}\quad |x|\leq {\rm e}^{-\alpha}\,\Big\}\,.
$$
Then
\bq
\label{min1}
\Inf_{u\in K_\alpha}\,\|\nabla u\|_{L^2}^2=1\,.
\eq
\end{lem}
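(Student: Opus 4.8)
The plan is to prove the two inequalities separately, the lower bound being an immediate consequence of the capacity computation in Proposition \ref{conssym}. For the upper bound, the Moser function itself is admissible: $f_\alpha$ is radial and continuous, is supported in $\overline{B(1)}$, vanishes on $\partial B(1)$, and equals $\sqrt{\frac{\alpha}{2\pi}}$ on $B(0,{\rm e}^{-\alpha})$, so $f_\alpha\in K_\alpha$; moreover the same elementary computation as the one behind \eqref{behav} gives
$$
\|\nabla f_\alpha\|_{L^2}^2 = 2\pi\int_{{\rm e}^{-\alpha}}^1 \frac{1}{2\pi\alpha\, r^2}\, r\, dr = \frac1\alpha\int_{{\rm e}^{-\alpha}}^1\frac{dr}{r} = 1,
$$
whence $\Inf_{u\in K_\alpha}\|\nabla u\|_{L^2}^2\leq 1$.

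For the lower bound I would rescale and use the relative capacity of a ball. Given $u\in K_\alpha$, put $v:=\sqrt{\frac{2\pi}{\alpha}}\,u\in H^1_0(B(1))$; by definition of $K_\alpha$ one has $v\geq 1$ a.e. on $\{|x|\leq {\rm e}^{-\alpha}\}$. Fix $\alpha'>\alpha$. Since ${\rm e}^{-\alpha'}<{\rm e}^{-\alpha}$, the open ball $B({\rm e}^{-\alpha})$ is a neighbourhood of $\overline{B({\rm e}^{-\alpha'})}$ on which $v\geq 1$ a.e., so $v\in\Gamma_{\overline{B({\rm e}^{-\alpha'})}}$; hence, by Theorem \ref{capac}, monotonicity of the relative capacity and Proposition \ref{conssym},
$$
\int_{B(1)}|\nabla v|^2 \;\geq\; \mbox{\tt Cap}_{B(1)}\big(\overline{B({\rm e}^{-\alpha'})}\big) \;\geq\; \mbox{\tt Cap}_{B(1)}\big(B({\rm e}^{-\alpha'})\big) \;=\; \frac{2\pi}{\alpha'}.
$$
Undoing the scaling, $\|\nabla u\|_{L^2}^2=\frac{\alpha}{2\pi}\int_{B(1)}|\nabla v|^2\geq \frac{\alpha}{\alpha'}$, and letting $\alpha'\downarrow\alpha$ yields $\|\nabla u\|_{L^2}^2\geq 1$. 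Together with the upper bound this gives \eqref{min1}, and in fact shows that $f_\alpha$ — equivalently $\sqrt{\frac{\alpha}{2\pi}}$ times the capacitor potential of $B({\rm e}^{-\alpha})$ in $B(1)$ exhibited in the proof of Proposition \ref{conssym} — is the minimizer.

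The only genuinely delicate point is the bookkeeping around the word ``neighbourhood'' in the definitions \eqref{relCap} and of $\Gamma_A$: a function that is merely $\geq 1$ a.e. on $B({\rm e}^{-\alpha})$ is not automatically admissible for $\mbox{\tt Cap}_{B(1)}(B({\rm e}^{-\alpha}))$ in the strict sense of \eqref{relCap}. Shrinking the ball slightly, i.e. replacing ${\rm e}^{-\alpha}$ by ${\rm e}^{-\alpha'}$ with $\alpha'>\alpha$, turns the constraint into an honest ``a.e.\ on a neighbourhood'' condition, and the explicit monotone dependence $\alpha'\mapsto 2\pi/\alpha'$ of the capacity recovers the sharp constant in the limit $\alpha'\to\alpha$. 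Everything else reduces to the computation of $\|\nabla f_\alpha\|_{L^2}$ above and the capacity formula \eqref{capBall}.
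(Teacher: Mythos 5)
Your proof is correct, but it reaches the lower bound by a genuinely different route than the paper. For the infimum from below, the paper treats \eqref{min1} as an obstacle problem: it invokes the existence, uniqueness and $W^{2,\infty}$ regularity of a minimizer $u^*$ over the closed convex set $K_\alpha$ (citing Evans and Kinderlehrer--Stampacchia), uses its harmonicity in the annulus $\{{\rm e}^{-\alpha}<|x|<1\}$ to write $u^*(x)=a\log|x|$ there, and then bounds $\|\nabla u^*\|_{L^2}^2\geq 2\pi a^2\alpha\geq 1$ from the obstacle constraint; the upper bound via $\|\nabla f_\alpha\|_{L^2}^2=1$ is the same in both arguments. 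You instead test an \emph{arbitrary} competitor $u\in K_\alpha$ against the relative capacity: after the rescaling $v=\sqrt{2\pi/\alpha}\,u$, the constraint makes $v$ admissible (up to the neighbourhood bookkeeping you handle with $\alpha'>\alpha$) for $\mbox{\tt Cap}_{B(1)}\big(B({\rm e}^{-\alpha'})\big)=2\pi/\alpha'$ from Proposition \ref{conssym}, and letting $\alpha'\downarrow\alpha$ gives the sharp constant. This buys you a proof that stays entirely within the toolkit of Appendix C (the explicit formula \eqref{capBall} plus monotonicity of capacity in the set), with no appeal to existence or regularity theory for variational inequalities, and it exhibits $f_\alpha$ as an extremal; what it does not give, and the paper's route does, is uniqueness of the minimizer. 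One further simplification is available to you: since $v\geq 1$ a.e.\ on the closed ball $\{|x|\leq {\rm e}^{-\alpha}\}$, which contains the open ball $B({\rm e}^{-\alpha})$ as an open subset, $v$ is already admissible in the sense of \eqref{relCap} for $\mbox{\tt Cap}_{B(1)}\big(B({\rm e}^{-\alpha})\big)=\frac{2\pi}{\alpha}$, so the detour through $\alpha'>\alpha$ and the limit $\alpha'\downarrow\alpha$, while perfectly correct and safe under a stricter reading of ``neighbourhood'', is not actually needed.
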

\begin{proof}
Consider the following problem of minimizing
$$
I[u]:=\|\nabla u\|^2_{L^2(B(1))},
$$
among all the functions belonging to the set $ K_\alpha$. Since $K_\alpha$ is a closed convex subset of $H^1_{0,rad}(B(1))$, we get a variational problem with obstacle. It is well known (see for example, L. C. Evans \cite{E} and Kinderlehrer-Stampacchia \cite{KS}) that it has a unique minimizer $u^*$ belonging to $W^{2,\infty}(B(1))$, and which is harmonic in the set $\{\, {\rm e}^{-\alpha}<|x|<1\,\}$. Hence $u^*(x)=a\log|x|$ for ${\rm e}^{-\alpha}<|x|<1$ with some negative constant $a$. Since $u^*({\rm e}^{-\alpha})\geq \sqrt{\frac{\alpha}{2\pi}}$, we get $-a\geq \frac{1}{\sqrt{2\pi\alpha}}$. Therefore
\beqn
\|\nabla u^*\|_{L^2}^2&\geq&a^2\int_{{\rm e}^{-\alpha}<|x|<1}\;\frac{dx}{|x|^2}\\
&\geq&2\pi a^2\alpha\\
&\geq& 1\,.
\eeqn
The fact that $\|\nabla f_\alpha\|_{L^2}^2=1$ concludes the proof.
\end{proof}
We end this section by the following useful result which estimates the minimum energy of a function according to the variation of its values.
\begin{prop} \label{mainextraccore}
Let $B$ a ball of $\R^2$, and $E_1$, $E_2$ two subsets of $B$ such that
$$
0<|E_1|< |E_2|<  |B|\;\;  \quad\mbox{and}\quad |E_1|+|E_2|<|B|\,.
$$
Let $0<a_2<a_1$, and set
$$
K:=\Big\{\;u\in H^1(B);\quad |u|\geq a_1\;\;\;\mbox{on}\;\;\; E_1\quad\mbox{and}\quad |u|\leq a_2\;\;\;\mbox{on}\;\;\; E_2\;\Big\}\,.
$$
Then, for all $u\in K$, we have
\bq
\label{maincore}
\|\nabla u\|_{L^2}^2\geq \frac{4\pi(a_1-a_2)^2}{\log\left(\frac{|B|}{|E_1|}\right)}\,.
\eq
\end{prop}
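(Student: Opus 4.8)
The plan is to reduce, by a truncation, to a lower bound for a condenser capacity, and then to estimate that capacity from below by Schwarz symmetrization together with the explicit capacity of concentric balls (Proposition \ref{conssym}).

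\emph{Step 1 (reduction to a capacity estimate).} Since $|\nabla |u|\,|\le|\nabla u|$ a.e., we may assume $u\ge 0$. Given $u\in K$, put
\[
v:=\frac{1}{a_1-a_2}\big(\min(u,a_1)-a_2\big)^{+}\in H^1(B),
\]
so that $0\le v\le 1$, $v\equiv 1$ on $E_1$, $v\equiv 0$ on $E_2$, and $|\nabla v|\le(a_1-a_2)^{-1}|\nabla u|$ a.e.; hence $\|\nabla u\|_{L^2(B)}^2\ge(a_1-a_2)^2\|\nabla v\|_{L^2(B)}^2$. It is thus enough to show $\|\nabla v\|_{L^2(B)}^2\ge 4\pi/\log(|B|/|E_1|)$ for every such $v$.

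\emph{Step 2 (symmetrization and concentric balls).} Write $B=B(0,R)$ with $\pi R^2=|B|$, and for $0<t<1$ set $\mu(t)=|\{v>t\}|$. Since $E_1\subset\{v=1\}\subset\{v>t\}$ and $\{v>t\}\subset B\setminus E_2$, we get $|E_1|\le\mu(t)\le|B|-|E_2|$, and (using $|E_1|<|E_2|$) these bounds lie strictly between $0$ and $|B|$. Let $v^{*}$ be the Schwarz symmetrization of $v$ inside $B$; then $v^{*}\ge 1$ on $B(0,\rho_1)$ with $\pi\rho_1^2=|E_1|$, and $v^{*}\equiv 0$ on $B(0,R)\setminus B(0,\rho_0)$ with $\pi\rho_0^2=|B|-|E_2|<|B|$. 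Granting that this symmetrization does not increase the Dirichlet energy (as in Proposition \ref{rearr}), so that $\|\nabla v^{*}\|_{L^2}^2\le\|\nabla v\|_{L^2}^2$, we have $v^{*}\in H^1_0(B(0,\rho_0))$ with $v^{*}\ge 1$ in a neighbourhood of $\overline{B(0,\rho_1')}$ for every $\rho_1'<\rho_1$, whence by Theorem \ref{capac} and Proposition \ref{conssym}
\[
\|\nabla v^{*}\|_{L^2}^2\ \ge\ \mathrm{Cap}_{B(0,\rho_0)}\big(B(0,\rho_1)\big)\ =\ \frac{2\pi}{\log(\rho_0/\rho_1)}\ =\ \frac{4\pi}{\log\!\big((|B|-|E_2|)/|E_1|\big)}\ \ge\ \frac{4\pi}{\log(|B|/|E_1|)},
\]
the last step because $0<|E_1|<|B|-|E_2|\le|B|$. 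Combined with Step 1 this yields \eqref{maincore}.

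\emph{Main obstacle.} The genuinely delicate point is Step 2: since $v$ need not vanish on $\partial B$, the plain P\'olya--Szeg\H{o} inequality on the bounded domain $B$ fails in general (a half-disk already violates it). One therefore runs the argument through the co-area formula
\[
\|\nabla v\|_{L^2(B)}^2\ \ge\ \int_0^1\frac{\big(\mathcal H^1(\{v=t\}\cap B)\big)^{2}}{-\mu'(t)}\,dt
\]
and the \emph{relative} isoperimetric inequality in the disk: the hypotheses $|E_1|<|E_2|$ and $|E_1|+|E_2|<|B|$ are exactly what confine $\mu(t)$ to a range in which the perimeter of $\{v>t\}$ inside $B$ still dominates $2\sqrt{\pi\mu(t)}$, so that replacing the super-level sets by concentric disks costs no energy (equivalently, one may invoke a condenser-symmetrization result). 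The subsequent reduction of the resulting one-dimensional variational problem to $\mathrm{Cap}_{B(\rho_0)}(B(\rho_1))$ and the optimization of $\log(\rho_0/\rho_1)$ against $|E_1|,|E_2|,|B|$ are the routine calculations already carried out in the proof of Proposition \ref{conssym}.
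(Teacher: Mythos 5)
Your Step 1 and your final radial computation are exactly the paper's route: the paper's proof also consists in reducing, by Schwarz symmetrization, to the concentric configuration $E_1=B(r_1)$, $E_2=\{r_2<|x|<R\}$ and then invoking the harmonic-minimizer computation of Lemma \ref{minEner} and Proposition \ref{conssym}. The genuine gap is precisely the point you single out in Step 2, and the repair you propose does not work. The inequality you invoke through the co-area formula, $\cH^1(\{v=t\}\cap B)\ge 2\sqrt{\pi\,\mu(t)}$, is the \emph{full-plane} isoperimetric inequality, and it fails for super-level sets touching $\partial B$ no matter what their measure is: for $E=B\cap B(p,\rho)$ with $p\in\partial B$ and $0<\rho\le R$ one has $\cH^1(\partial E\cap B)\le\pi\rho$ while $|E|\ge\pi\rho^2/3$, so $2\sqrt{\pi|E|}\ge 2\pi\rho/\sqrt{3}>\pi\rho$. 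The hypotheses $|E_1|<|E_2|$ and $|E_1|+|E_2|<|B|$ constrain only the range of $\mu(t)$, not the position of the level sets, so they cannot exclude such boundary-hugging sets; the sharp relative isoperimetric constant of the disk is strictly below the full-plane constant for every value of the measure, so no argument confining $\mu(t)$ to a sub-range can restore the Polya--Szego step you need.

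Moreover the loss is not a technicality of your method. Take $p\in\partial B$, $r_1\ll r_2$ small, $E_1=B\cap B(p,r_1)$, $E_2=B\setminus B(p,r_2)$ (all the measure hypotheses hold) and
$$
u(x)=a_2+(a_1-a_2)\,\min\Bigl(1,\max\Bigl(0,\tfrac{\log(r_2/|x-p|)}{\log(r_2/r_1)}\Bigr)\Bigr)\in K\,;
$$
since only about half of each annulus centered at $p$ lies inside $B$, its energy is $\pi(a_1-a_2)^2/\log(r_2/r_1)\,(1+o(1))$, whereas $\log(|B|/|E_1|)=2\log(1/r_1)+O(1)$, so for fixed $r_2$ and $r_1\to0$ this energy is about one half of the right-hand side of \eqref{maincore}. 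Hence no argument that uses only the measures $|E_1|,|E_2|,|B|$ together with a symmetrization inside $B$ can produce the constant $4\pi$: one needs either a boundary condition (as in the relative capacity of Definition \ref{relativeCap}, where symmetrization within $H^1_0$ is legitimate), information keeping $E_1$ away from $\partial B$, or a weaker constant. Be aware that the paper's own proof is the same one-line symmetrization reduction and does not address this point either; you have correctly located the delicate step, but the relative-isoperimetric fix you sketch is false and the gap remains open in your proposal.
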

\begin{proof}
Using Schawrz symmetrization, we can assume that
$$
B=B(R),\quad E_1=B(r_1),\quad\mbox{and}\quad E_2=\{\,r_2<|x|<R\},
$$
where $0<r_1<r_2<R$. Arguing exactly as in the proof of Lemma \ref{minEner}, we obtain
\beqn
\Inf_{u\in K}\,\|\nabla u\|_{L^2}^2&\geq&\left(\frac{a_1-a_2}{\log(\frac{r_1}{r_2})}\right)^2\,\int_{r_1<|x|<r_2}\,\frac{dx}{|x|^2}\\
&\geq&\frac{4\pi(a_1-a_2)^2}{\log(\frac{\pi r_2^2}{\pi r_1^2})}\\
&\geq&\frac{4\pi(a_1-a_2)^2}{\log\left(\frac{|B|}{|E_1|}\right)}\,.
\eeqn

\end{proof}

%@@@@@@@@@@@@@@@@@@@@@@@@@@@@@@@@@@@@%@@@@@@@@@@@@@@@@@@@@@@@@@@@@@@@@@@@@%@@@@@@@@

%@@@@@@@@@@@@@@@@@@@@@@@@@@@@@@@@@@@@%@@@@@@@@@@@@@@@@@@@@@@@@@@@@@@@@@@@@%@@@@@@@@

\bigskip

\bigskip

\noindent{\bf Acknowledgments.} {\it  We are very grateful  to P. G\'erard, L.
Mazet and E. Sandier
 for  interesting
 discussions around the questions dealt with in this paper.} \\


\begin{thebibliography}{10}

\bibitem{AT}
S.~Adachi and K.~Tanaka, {\em Trudinger type inequalities in
$\mathbb R^N$ and their best exponents}, Proceedings of the American Mathematical Society, {\bf
128 (7)} (2000), 2051--2057.

\bibitem{A}
D. R.~Adams, {\em A sharp inequality of J. Moser for higher order derivatives}, Annals of Mathematics, {\bf
128 (2)} (1988), 385--398.

\bibitem{AD} Adimurthi and O. Druet, {\em Blow-up analysis in dimension 2 and a sharp form of
              Trudinger-Moser inequality}, Communications in Partial Differential Equations, {\bf 29} (2004), 295--322.

\bibitem{AT2} Adimurthi and K. Tintarev, {\em On compactness in the Trudinger-Moser inequality}, {\it arXiv:1110.3647}, (2011).

\bibitem{BCD} H. Bahouri, J.-Y. Chemin and R. Danchin, {\em Fourier Analysis and Nonlinear
Partial Differential Equations}, { Grundlehren der mathematischen Wissenschaften, Springer}, (2011).

\bibitem{BCG} H. Bahouri, A. Cohen and G. Koch, {\em A general wavelet-based profile decomposition in the critical embedding of function spaces},   Confluentes Mathematici, {\bf 3} (3)  (2011), 1--25.


\bibitem{BG} H. Bahouri and I. Gallagher,   {\em Weak stability  of the set of global solutions to the Navier-Stokes equations}, {\it arXiv:1109.4043}, (2011).

  \bibitem{BG2}
H.~Bahouri and P.~G\'erard, {\em High frequency approximation of
solutions to critical nonlinear wave equations},  American  Journal of  Math,
 {\bf 121} (1999),  131--175.


\bibitem{BMM}
H.~Bahouri, M.~Majdoub and N.~Masmoudi, {\em On  the lack of compactness in the 2D critical Sobolev embedding}, Journal of Functional Analysis, {\bf 260} (2011),  208--252.


\bibitem{Benameur}  J. Ben Ameur, {\em Description du d\'efaut
de compacit\'e de l'injection de Sobolev sur le groupe de Heisenberg},
   Bulletin de la Soci{\'e}t{\'e}
Math{\'e}\-ma\-ti\-que de  Belgique,  {\bf15} (4)  (2008), 599--624.

\bibitem{BC}  H. Brezis and J. M. Coron, {\em Convergence of solutions of H-Systems or how to blow bubbles},
   Archiv for Rational Mechanics
and Analysis, {\bf 89} (1985), 21--86.

\bibitem{BW} H. Brezis and S. Wainger,  {\em A note on limiting cases of Sobolev embeddings and
              convolution inequalities},  Communications in Partial Differential Equations, {\bf 5} (1980), 773--789.


\bibitem{CC}
L.~Carleson and A.~Chang, {\em on the existence of an extremal
function for an inequality of J. Moser}, Bulletin des Sciences Math\'ematiques, {\bf 110}
(1986), 113--127.


\bibitem{Cianchi} A. Cianchi, {\em A sharp embedding theorem for Orlicz-Sobolev spaces}, Indiana University Mathematics Journal,
 {\bf 45} (1996), 39--65.

\bibitem{EKP} D. E. Edmunds, R. Kerman and L. Pick, {\em Optimal Sobolev imbeddings involving rearrangement-invariant
              quasinorms},  Journal of Functional Analysis, {\bf 170} (2000), 307--355.

\bibitem{E} L. C. Evans, {\em Partial differential equations}, Graduate studies in Mathematics AMS, (1998).

\bibitem{Flu}
M.~Flucher, {\em Extremal functions for the Trudinger-Moser
inequality in 2 dimensions}, Commentarii Mathematici Helvetici, {\bf 67} (1992),
471--479.

\bibitem{FLS}
N. Fusco, P.-L.~Lions and C. Sbordone, {\em Sobolev imbedding theorems in bordeline cases}, Procedings of the  American Mathematical Society, {\bf 124} (1996),
561--565.

\bibitem{Ge2}
P.~G\'erard, {\em Description du d\'efaut de compacit\'e de
l'injection de Sobolev}, ESAIM: Control, Optimisation and Calculus of Variations, {\bf 3} (1998),
213--233 (electronic, URL: http://www.emath.fr/cocv/).

\bibitem{Hans} K. Hansson, {\em Imbedding theorems of Sobolev type in potential theory}, Mathematica Scandinavica, {\bf 45} (1979), 77--102.


\bibitem{HMT} J. A. Hempel,  G. R. Morris and N. S. Trudinger, {\em On the sharpness of a limiting case of the Sobolev imbedding
              theorem }, Bulletin of the Australian Mathematical Society, {\bf 3} (1970),  369--373.


\bibitem{Henrot} A. Henrot and M. Pierre, {\em Variations et optimistation de formes}, {Math\'ematiques et applications}, Springer, {\bf 48}, (2005).

    \bibitem{IB} S. Ibrahim and M. Majdoub,
 {\em Comparaison des ondes lin\'eaires et non lin\'eaires \`a
coefficients variables},   Bulletin de la Soci{\'e}t{\'e}
Math{\'e}\-ma\-ti\-que de  Belgique, {\bf 10} (2003), 299--312.

\bibitem{IMM07}
S.~Ibrahim, M.~Majdoub, and N.~Masmoudi, {\em Double logarithmic inequality with a sharp constant},
{ Proceedings  of the  American Mathematical Society}, (2007) {\bf 135} (1), 87--97.


\bibitem{IMN11TM} S.~Ibrahim, N.~Masmoudi, and K.~Nakanishi, {\em Trudinger-{M}oser inequality on the whole plane with the exact growth
  condition}, arXiv:1110.1712, (2011).


\bibitem{jaffard} S. Jaffard, {\em Analysis of the lack of compactness in the critical Sobolev embeddings},  Journal of Functional Analysis, {\bf 161} (1999),  384--396.

\bibitem{km} C. E. Kenig and F. Merle, {\em Global well-posedness, scattering and blow-up for the energy
critical focusing non-linear wave equation}, Acta Mathematica,  {\bf 201} (2008),
 147--212.

 \bibitem{ker} S. Keraani, {\em On the defect of compactness for the Strichartz estimates  of the Shr\"odinger equation}, Journal of Differential equations, {\bf175} (2001), 353--392.

\bibitem{Kes} S. Kesavan, {\em Symmetrization} \& {\em applications}, {\it Series in Analysis},  {\bf 3}, {\it World Scientific Publishing Co. Pte. Ltd., Hackensack, NJ}, (2006).

\bibitem{KS} D. Kinderlehrer and G. Stampacchia,
{\em An introduction to variational inequalities and their applications},
Academic Press, (1980).

 \bibitem{Kozono} H. Kozono and H. Wadade, {\em Remarks on {G}agliardo-{N}irenberg type inequality with
              critical {S}obolev space and {BMO}},
Mathematische Zeitschrift, {\bf 259} (2008), 935--950.

\bibitem{Analysis} Elliott H. Lieb and Michael Loss, {\em Analysis}, {Graduate Studies in Mathematics, American Mathematical Society},  {\bf 14}, (1997).

\bibitem{La} C. Laurent, {\em On stabilization and control for the critical Klein-Gordon equation on a 3-D compact manifold}, to appear in Journal of Functional Analysis.

\bibitem{Lions1}
P.-L.~Lions, {\em The concentration-compactness principle in the
calculus of variations. The limit case. I.},  Revista Matemática Iberoamericana, {\bf 1} (1985), 145--201.

\bibitem{Lions2}
P.-L.~Lions, {\em The concentration-compactness principle in the
calculus of variations. The locally compact case .I.}, Annales de l'Institut Henri Poincar\'e Analyse Non Lin\'eaire, {\bf 1} (1984), 109--145.

\bibitem{Ma} M. Majdoub,
 {\em Qualitative study of the critical wave equation with a
subcritical perturbation},  Journal of  Mathematics  Analysis
and  Applications, {\bf 301} (2005), 354--365.



\bibitem{MP-PAMS} J. Mal{\'y} and L. Pick, {\em An elementary proof of sharp Sobolev embeddings}, Proceedings of the American Mathematical Society, {\bf 130} (2002), 555--563.



\bibitem{M} J.~Moser, {\em A sharp form of an inequality of N. Trudinger}, Indiana University Mathematics Journal, {\bf 20} (1971), 1077--1092.



 \bibitem{Orlicz-Book} M.-M. Rao and Z.-D. Ren, {\em Applications of Orlicz spaces}, Monographs and Textbooks in Pure and Applied Mathematics, {\bf 250} (2002), Marcel Dekker Inc.



\bibitem{Ruf} B.~Ruf, {\em A sharp Trudinger-Moser type inequality for unbounded domains in }$\mathbb R\sp 2$, Journal of Functional Analysis, {\bf 219} (2005), 340--367.


\bibitem{ST} I. Schindler and K. Tintarev, {\em An abstract version of the concentration compactness principle}, Revista Math Complutense, {\bf 15} (2002), 417--436.

\bibitem{So} S. Solimini, {\em A note on compactness-type properties with respect
to Lorentz norms of bounded subset of a Sobolev space}, Annales de  l'IHP analyse non lin\'eaire, {\bf 12} (1995), 319--337.


\bibitem{St} M. Struwe, {\em A global compactness result for boundary value problems involving limiting
nonlinearities}, Mathematische Zeitschrift, {\bf 187} (1984), 511--517.

\bibitem{Struwe88} M. Struwe, {\em Critical points of embeddings of {$H^{1,n}_0$} into Orlicz spaces}, Annales de l'Institut Henri Poincar\'e Analyse Non Lin\'eaire, {\bf 5} (1988), 425--464.


\bibitem{Struwe10} M. Struwe, {\em Global well-posedness of the Cauchy problem for a super-critical nonlinear wave equation in two space
    dimensions}, Mathematische Annalen,  {\bf 350} (2011), 707--719.

\bibitem{Talenti} G. Talenti, {\em Inequalities in rearrangement invariant function spaces}, {Nonlinear analysis, function spaces and applications}, {\bf 5} (1994), 177--230.

\bibitem{Tao} T. Tao, {\em An inverse theorem for the bilinear $L^2$ Strichartz estimate for the wave equation},
arXiv: 0904-2880, (2009).

\bibitem{Tru}
N.S.~Trudinger, {\em On imbedding into Orlicz spaces and some
applications}, Journal of Applied Mathematics and Mechanics, {\bf 17} (1967),  473--484.

\end{thebibliography}
\end{document}